\newcommand{\R}{{\mathbb R}}
\newcommand{\disp}{\displaystyle}
\newcommand{\U}{\mathcal U}
\newcommand{\dive}{\text{\rm div }}
\newtheorem{thm}{\textbf{Theorem}}[section]
\newtheorem{lem}[thm]{\textbf{Lemma}}
\newtheorem{pro}[thm]{\textbf{Proposition}}
\newtheorem{rem}[thm]{\textbf{Remark}}
\newtheorem{cor}[thm]{\textbf{Corollary}}
\newtheorem{defn}[thm]{\textbf{Definition}}
\theoremstyle{remark}
\title[Matrix weights and regularity results]{Matrix weights and regularity for degenerate elliptic equations}
\author[Di Fazio, Fanciullo, Monticelli, Rodney, Zamboni]{Giuseppe Di Fazio, Maria Stella Fanciullo, Dario Daniele Monticelli, Scott Rodney, Pietro Zamboni}
\date{}
\begin{document}

\begin{abstract}
We prove local boundedness, Harnack inequality and local regularity for weak solutions of quasilinear degenerate elliptic equations in divergence form. Degeneracy is via a non negative, symmetric, measurable matrix-valued function $Q(x)$ and two suitable non negative weight functions. We setup an axiomatic approach in terms of suitable geometric conditions and local Sobolev -- Poincar\'{e} inequalities. Data integrability is close to $L^1$ and it is exploited in terms of suitable version of Stummel-Kato class that in some cases is also necessary to the regularity.
\end{abstract}

\maketitle

\section{Introduction}

The aim of this note is to study the local behaviour of  solutions to quasilinear degenerate elliptic equations of the following kind
\begin{align}\label{Qppoisson1}-\textrm{div}\left(h(x)\left|\sqrt{Q(x)}\nabla u(x)\right|^{p-2}Q(x)\nabla u(x)\right) = m(x)|f(x)|^{p-2}f(x)
\end{align}
that admits two different types of degeneracy.

The first one is given by a non negative definite, symmetric, measurable matrix-valued function $Q(x)$ associated with the principal part of the differential operator. The other one is expressed in terms of two non negative weight functions $h$ and $m$ that can vanish or blow up.

We  assume that $f$ lies in a natural 2-weight generalization of the Stummel-Kato class (see \cite{as} and \cite{CFG}) and  prove local boundedness and Harnack inequality for weak solutions of equation \eqref{Qppoisson1} (see Theorems \ref{localboundedness} and \ref{lastbutone}).
We accomplish this using Moser's iteration technique (see \cite{s}) and a Fefferman-Phong inequality adapted to our setting.
As a consequence, we obtain local H\"older continuity of weak solutions (see Theorem \ref{lastone} and Remark \ref{lastrem}).

Each of our results is in the spirit of \cite{DMSR1} and \cite{MRW1} where the authors investigate quasilinear degenerate elliptic equations of the type
\begin{equation}\label{equa_introd}
\dive A(x,u,\nabla u)+B(x,u,\nabla u)=0\,
\end{equation}
for functions $A:\mathbb{R}\times\mathbb{R}\times\mathbb{R}^n \rightarrow \mathbb{R}^n$ and $B:\mathbb{R}\times\mathbb{R}\times\mathbb{R}^n \rightarrow \mathbb{R}$ that satisfy Trudinger - Serrin type structural conditions.  In the case $h=m=1$, our equation \eqref{Qppoisson1} is also of this form.  However, in \cite{DMSR1} and \cite{MRW1}, the data function $f$ is required to satisfy minimal $L^p$ or Morrey type restrictions stated in terms of the gain in a Sobolev-type inequality.  As such, our work generalizes \cite{DMSR1} and \cite{MRW1} for the special case of equation \eqref{Qppoisson1} in two ways.  First, we use an approach that allows the incorporation of the weights $h,m$, and second, we allow a new non-$L^p$ restriction on the data function $f$. \\

Concerning the study of the regularity properties of solutions of quasilinear elliptic equations of the kind \eqref{equa_introd}, the first paper where non - $L^p$ conditions appear is \cite{RAZ}, where a simplified form of \eqref{equa_introd} was studied with right hand side being a measure in a Morrey space.  Local H\"older continuity of weak solutions is proved using De Giorgi's methods, as adapted by Ladyzhenskaya and Ural'tseva in \cite{lu} to quasilinear equations.  Improvements of \cite{RAZ} can be found in
\cite{RA} and \cite{Lieberman} where $L^p$ assumptions are weakened or replaced by Morrey space restrictions. Similar investigations were made in \cite{Z4} with Stummel-Kato type requirements on the lower order coefficients generalizing \cite{as} and \cite{CFG} to quasilinear elliptic equations.

Aside from minimal data restrictions, many authors have also investigated the regularity of weak solutions of \eqref{Qppoisson1} where degeneracy is encoded by the weights $h,m$ and the matrix function $Q$.  For example, when $Q(x)$ has non negative minimum eigenvalue given by an appropriate $A_\infty$ weight, local regularity of solutions was studied by Fabes, Kenig, and Serapioni in \cite{FKS} with further progress in \cite{DKV} among others.  In each of these works, continuity results are obtained using a Sobolev inequality adapted to $Q$ and an iterative argument of Moser type.   Even when the minimum eigenvalue is not $A_\infty$, some progress has been made in \cite{SRKS} where they allow the matrix $Q$ to vanish to inifinite order at an interior point of a domain.  Similarily,  \cite{dz} and \cite{dfz3} obtain similar results for data in appropriate weighted Stummel-Kato type classes.

Many regularity results for weak solutions of elliptic equations have also been generalized to the
Carnot Caratheodory (CC) spaces where a metric is generated using sub-unit curves associated
to a system of non-commuting vector fields, and these vector fields define the matrix $Q$. In this direction we quote \cite{CDG},
\cite{D}, \cite{DGN}, \cite{dz1} and \cite{dz2} where the above
mentioned results are obtained in the subelliptic setting through Morrey and Stummel-Kato
type assumptions on lower order terms.

The first attempt to consider together the several types of degeneracy described above has been exploited by
Franchi, Gutierrez and Wheeden in \cite{fgw1} and \cite{fgw2}, where they proved Harnack inequality for non negative weak solutions of equation
$$
\dive (w(z)\nabla_\lambda u(z))=0
$$
where $w$ is a suitable power of a strong $A_\infty$ weight, the $\lambda$ gradient is
$\nabla_\lambda u (z)=(\nabla_xu(z)$, $\lambda(x)\nabla_yu(z))$ and $z=(x,y)$.
We remark that equations studied  in \cite{fgw1} and \cite{fgw2} do not contain lower order terms.  We also note that \cite{28} and \cite{35} extend the  results contained in \cite{fgw1} and \cite{fgw2} to more general equations with lower order terms in suitable Stummel-Kato classes.\\

The paper is organized as follows: in Section \ref{sec2} we collect the axiomatic assumptions under which we prove our results concerning weak solutions of equation \eqref{Qppoisson1} and we provide some preliminary results; in Section \ref{sec3} we describe our functional setting; in Section \ref{sec4} we introduce the Stummel-Kato class adapted to our setting and we prove a Fefferman-Phong inequality; lastly, in Section \ref{sec5} we state and prove our local regularity results and the Harnack inequality for weak solutions of equation \eqref{Qppoisson1}.

\section{Preliminaries}\label{sec2}

Fix a bounded domain $\Omega$ of $\mathbb{R}^n$ and assume it is endowed with a metric $\rho$ that generates a topology equivalent to the Euclidean one.
We will denote respectively by $$B_r(x)=B(x,r)=\{y\in\Omega\,|\,\rho(x,y)<r\}$$ and $D_r(x)=D(x,r)$ respectively the $\rho$-metric ball and the Euclidean ball centred at $x\in\Omega$ with radius $r>0$.  By our assumptions, every metric ball $B$ is an open set; moreover, for every small enough radius $r$, the Euclidean closure of the metric ball $B(x,r)$ is contained in $\Omega$.

Indeed, we have

\begin{lem}
   Let $E,E_0$ be bounded open domains such that $\overline{E}\subset E_0$, $\overline E_0\subset\Omega$. Then there exists $\delta>0$ such that for every $x\in E$, $0<r<\delta$ one has
   \begin{equation}\label{1}
       \overline{B(x,r)}\subseteq\{y\in\Omega\,|\,\rho(x,y)\leq r\}\subset E_0
       \end{equation}

\end{lem}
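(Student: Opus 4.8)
The plan is to prove the two inclusions in \eqref{1} separately, with the right-hand one carrying essentially all the work because it must be made uniform in $x\in E$; the left-hand inclusion is a soft topological fact. Throughout I would use two consequences of the hypothesis that $\rho$ generates the Euclidean topology on $\Omega$: every $\rho$-ball is Euclidean-open, and every Euclidean-open subset of $\Omega$ is $\rho$-open, hence contains a $\rho$-ball about each of its points.

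For the inclusion $\{y\in\Omega\,|\,\rho(x,y)\le r\}\subset E_0$ I would argue by compactness of $\overline E$. Since $\overline E\subset E_0$ and $E_0$ is Euclidean-open, hence $\rho$-open, for each $x\in\overline E$ there is $\varepsilon_x>0$ with $B(x,2\varepsilon_x)\subseteq E_0$. The family $\{B(x,\varepsilon_x)\}_{x\in\overline E}$ consists of Euclidean-open sets and covers the compact set $\overline E$, so finitely many of them, say $B(x_1,\varepsilon_1),\dots,B(x_N,\varepsilon_N)$ (writing $\varepsilon_i:=\varepsilon_{x_i}$), already cover $\overline E$. Set $\delta:=\min_{1\le i\le N}\varepsilon_i>0$. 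Now fix $x\in E$ and $0<r<\delta$, pick $i$ with $\rho(x,x_i)<\varepsilon_i$, and let $y\in\Omega$ with $\rho(x,y)\le r$. By the triangle inequality for $\rho$,
\[
\rho(x_i,y)\le\rho(x_i,x)+\rho(x,y)<\varepsilon_i+\delta\le 2\varepsilon_i,
\]
so $y\in B(x_i,2\varepsilon_i)\subseteq E_0$, which is the desired inclusion.

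For the inclusion $\overline{B(x,r)}\subseteq\{y\in\Omega\,|\,\rho(x,y)\le r\}$ I would note that, for fixed $x$, the map $y\mapsto\rho(x,y)$ is $1$-Lipschitz for $\rho$, hence continuous for the $\rho$-topology and therefore, by equivalence of topologies, continuous for the Euclidean topology on $\Omega$; thus $\{y\in\Omega\,|\,\rho(x,y)\le r\}$ is relatively closed in $\Omega$ and contains $B(x,r)$, so it contains the closure of $B(x,r)$ taken in $\Omega$. By the previous paragraph this set is contained in $E_0$, and since $\overline{E_0}$ is Euclidean-compact, hence closed, and $\overline{E_0}\subset\Omega$, the full Euclidean closure $\overline{B(x,r)}$ is contained in $\overline{E_0}\subset\Omega$; therefore the closure of $B(x,r)$ in $\Omega$ coincides with $\overline{B(x,r)}$, giving the claim. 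The only genuine obstacle is obtaining the $x$-uniform $\delta$, and this is exactly where compactness of $\overline E$ and the passage between the two equivalent topologies are used; the rest is the triangle inequality and continuity of $y\mapsto\rho(x,y)$.
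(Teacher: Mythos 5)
Your proof is correct, and it takes a somewhat different route from the paper's. The paper argues by contradiction with sequential compactness: if no uniform $\delta$ existed, one could extract $x_n\in E$ and $y_n\in\partial E_0$ with $\rho(x_n,y_n)<\tfrac1n$, pass to a Euclidean-convergent subsequence $x_n\to\bar x\in\overline E$, use the equivalence of the topologies twice to conclude $y_n\to\bar x$, and obtain the contradiction $\bar x\in\overline E\cap\partial E_0$. You instead produce the uniform $\delta$ directly by a finite-subcover (Lebesgue-number type) argument: cover $\overline E$ by $\rho$-balls $B(x_i,\varepsilon_i)$ with $B(x_i,2\varepsilon_i)\subseteq E_0$, set $\delta=\min_i\varepsilon_i$, and finish with the triangle inequality. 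Both proofs rest on exactly the same two ingredients, compactness of $\overline E$ and the equivalence of the $\rho$- and Euclidean topologies, so the difference is one of form rather than substance; your version has the mild advantages of being direct (it exhibits $\delta$ explicitly) and of spelling out the left-hand inclusion $\overline{B(x,r)}\subseteq\{y\in\Omega\,:\,\rho(x,y)\le r\}$, via continuity of $y\mapsto\rho(x,y)$ together with $\overline{B(x,r)}\subseteq\overline{E_0}\subset\Omega$, a point the paper's contradiction argument leaves implicit. The paper's sequential argument is shorter on the page but less quantitative.
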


\begin{proof}
  Arguing by contradiction, if the result does not hold one can find two sequences $x_n$, $y_n$ such that for every $n\in\mathbb{N}$ one has $x_n\in E$, $y_n\in\partial E_0$ and $\rho(x_n,y_n)<\frac{1}{n}$. Since $\overline{E}$ is compact, up to a subsequence we have that $x_n$ converges to some $\bar{x}\in\overline{E}$, in the Euclidean sense. Since the topologies are equivalent we see that $\rho(\bar{x},x_n)$ tends to $0$, and thus by the triangle inequality we deduce that also $\rho(\bar{x},y_n)$ tends to $0$. Hence, again by the equivalence of the topologies, we have that $y_n$ converges to $\bar{x}$, so that $\bar{x}\in\partial E_0$. Then $\overline{E}\cap\partial E_0\neq\emptyset$, a contradiction.
\end{proof}

We will write $r=r(B)$ to denote the radius of a given ball $B$.  For simplicity of presentation, given any positive constant $c$, we denote by $cB$ the ball concentric to $B$ with radius $cr(B)$. Given any two balls $B_1=B(x,r_1)$, $B_2=B(y,r_2)$ with non-empty intersection and radii $r_1\geq r_2$, we remind the reader that one has $B_2\subset 3B_1$, the standard swallowing result for metric balls.% where $\tilde{\gamma} = 2\kappa^2+\kappa$ is the standard swallowing constant for quasimetric balls and $\kappa$ is as in the quasimetric triangle inequality.

With a view to studying regularity for degenerate elliptic operators of second order with rough coefficients, let $\mu$ be a given measure and $Q:\Omega\rightarrow S_n$ be a $\mu$-measurable matrix function taking values in $S_n$, the collection of all non negative definite $n\times n$ symmetric matrices.  We will always assume that the $p^{th}$ power of the pointwise operator norm of $\sqrt{Q}$ is a weight - that is, $v=\|Q(x)\|_{op}^{p/2}\in L^1_{loc}(\Omega;\mu)$, where $$\|Q(x)\|_{op}:=\sup_{\xi\in\mathbf{R}^n,|\xi|=1}\langle\xi,Q(x)\xi\rangle.$$  As an application, we consider principal second order equations of the form
\begin{align}\label{Qppoisson}-\textrm{div}\left(h(x)\left|\sqrt{Q(x)}\nabla u(x)\right|^{p-2}Q(x)\nabla u(x)\right) = m(x)|f(x)|^{p-2}f(x)
\end{align}
for $x\in\Omega$ and $f$ in the Stummel-Kato class of functions defined below.

We will always assume that the non negative weight functions $h$ and $m$ are in $L^1_{loc}(\R^n)$.

We will be working in an axiomatic setting where we will assume the validity of geometric and analytic assumptions that we list below.

 \begin{itemize}
\item[\bf{(H1)}]  \textit{Segment property.} For every $x,y\in\Omega$ there exists a continuous curve $\gamma:I\subset\mathbb{R}\rightarrow\Omega$ connecting $x,y$ such that $\rho(\gamma(s),\gamma(t))=|s-t|$ for every $s,t\in I$.
\item[\bf{(H2)}]  The $\sigma$-finite measures $\mu=h\,dx$, $\nu=m\,dx$ are both doubling for balls $B$ with $2B \Subset\Omega$.  Hence, there are constants $A_\nu,A_\mu>0$ so that for any such ball
\begin{eqnarray}\label{nudoub}
\nu(2B) \leq A_\nu \nu(B)\qquad \mu(2B)\leq A_\mu\mu(B).
\end{eqnarray}
We also assume that $\mu$ is reverse-doubling of order 1, i.e. there exists a constant $C>0$ such that for every pair of balls $\tilde{B}\subset B\Subset\Omega$ one has $$\mu(B)\geq C \left(\frac{r(B)}{r(\tilde{B})}\right)\mu(\tilde{B}).$$
\item[\bf{(H3)}] We require that $d\mu\ll d\nu$, where $d\mu=h\,dx$ and $d\nu=m\,dx$. Moreover, we also assume that $\nu(E)>0$ for every Lebesgue measurable set $E\subset\Omega$ having positive Lebesgue measure. This is equivalent to assume that $m>0$ Lebesgue-a.e. Finally we require that there exists $C>0$ such that for every compact subset $E\subset\Omega$ we have
\begin{equation}\label{ipotesi_misure}
\int_E v~d\mu\leq C\int_E ~d\nu
\end{equation}
where we recall that $v=\|Q\|^{p/2}_{op}$. Note that this is equivalent to $vh\leq C m$ Lebesgue-a.e.
\item[\bf{(H4)}] \textit{Local Poincar\'{e} inequality.} There exists a constant $C_0>0$ so that given any ball $B\Subset\Omega$ one has
\begin{equation}\label{P}
\frac{1}{\nu (B)}\int_B|u-u_B|~d\nu\le C_0 \frac{r(B)}{\mu(B)}\int_B|\sqrt Q\nabla u|~d\mu\end{equation}
for every $u\in Lip_{loc}(\Omega)$, the collection of all locally Lipschitz functions defined on $\Omega$, where $u_B=\frac{1}{\nu(B)}\int_B u \,d\nu$.
\item[\bf{(H5)}] \textit{Local Sobolev inequality.} For any $p\geq 1$ there exists constants $C_1>0$, $k>1$ so that given any ball $B\Subset\Omega$ one has
\begin{equation}\label{S}
\left(\frac{1}{\nu (B)}\int_B|u|^{kp}~d\nu\right)^\frac{1}{kp}\le C_1 r(B)\left(\frac{1}{\mu(B)}\int_B|\sqrt Q\nabla u|^p~d\mu\right)^\frac{1}{p}\end{equation}
for every $u\in Lip_0(B)$, the collection of all Lipschitz functions compactly supported in $B$.
\item[\bf{(H6)}]  \textit{Accumulating sequences of Lipschitz cut-off functions.}

Given any metric ball $B=B(x,r)$ such that $B\Subset\Omega$ there exists a sequence of functions $\varphi_j\in Lip_0(B)$ so that for every $j\in\mathbb{N}$
    \begin{enumerate}
    \item $0\leq \varphi_j(x)\leq 1$,
    \item $\varphi_j(x)=1$ for $x\in \tau B$,
    \item $\operatorname{supp}\varphi_{j+1}\subset\{y\in B\,:\, \varphi_j(y)=1\}$,
    \item $\displaystyle\left\|\frac{|\sqrt{Q}\nabla\varphi_j|}{v^\frac{1}{p}}\right\|_\infty\leq \disp\frac{\tilde{C}T^j}{r^\gamma}$.
    \end{enumerate}
where $\gamma,\tilde{C}>0,T\geq1$, $0<\tau<1$ are positive constants.
\end{itemize}

\begin{rem} Since $\nu$, $\mu$ are doubling measures on the collection of (open) metric balls by (H2), it is easy to see that no open set can have measure zero.

Concerning condition (H6)-(4), we note that, by the definition of $v$, one always has
$$\left\|\frac{|\sqrt{Q}\nabla\varphi|}{v^\frac{1}{p}}\right\|_\infty\leq\|\nabla\varphi\|_\infty$$ for every $\varphi\in Lip (\Omega)$.
\end{rem}

Note that, since $d\nu$ is doubling for balls $B$ with closure in $\Omega$, we also have that $(\Omega,\rho)$ is  geometrically doubling for $\varrho$ - balls. That is, there exists a constant $M\geq1$ such that if $B$ is a ball satisfying $2B\Subset\Omega$, then it may contain at most $M$ centres of disjoint metric balls of radius $r(B)/2$. The connection between doubling properties for measures and geometric doubling for the collection of balls $\{B\}$ whose doubles have closure in $\Omega$ can be further explored in \cite{HyK,HyM}.

This property is useful on a scale of radii that we present in the following lemma.

\begin{lem}\label{geolem}
If $(\Omega,\rho)$ is a geometrically doubling metric space and the ball $B=B(x,r)$ satisfies $2B\Subset\Omega$, then it may contain at most $Ms^{-Q}$ centers of disjoint balls of radius $sr$, with $0<s<1$, where $Q=\log_2(M)$ and $M$ is as above.
\end{lem}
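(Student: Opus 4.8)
The plan is to iterate the geometric doubling property (the packing bound recalled just before the statement) along a dyadic scale of radii and then to convert the resulting dyadic estimate into the one for a general dilation factor $s$. The first step I would carry out is this dyadic reduction. Given $0<s<1$, set $k=\lceil\log_2(1/s)\rceil$, so that $k\ge 1$ and $2^{-k}\le s<2^{-(k-1)}$. Since $sr\ge 2^{-k}r$, each ball $B(z_i,sr)$ contains the concentric ball $B(z_i,2^{-k}r)$, and therefore any family of pairwise disjoint balls of radius $sr$ with centers in $B=B(x,r)$ gives, with the same centers, a family of pairwise disjoint balls of radius $2^{-k}r$ with centers in $B$. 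Hence it suffices to show that $B$ contains at most $M^k$ such centers: from $2^k<2/s$ and $Q=\log_2 M$ one has $M^k=(2^k)^Q<(2/s)^Q=2^Q s^{-Q}=M\,s^{-Q}$, which is precisely the asserted bound, and conversely $s<2^{-(k-1)}$ forces $M\,s^{-Q}>M\cdot M^{k-1}=M^k$, so nothing is lost.

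The second step is to prove, by induction on $k\ge 1$, the following dyadic statement: every ball $B'=B(y,\rho)$ with $2B'\Subset\Omega$ contains at most $M^k$ centers of pairwise disjoint balls of radius $2^{-k}\rho$. For $k=1$ this is exactly the geometric doubling hypothesis, which is applicable because $2B'\Subset\Omega$. For the passage from $k$ to $k+1$ I would halve the scale: starting from a family of pairwise disjoint balls of radius $2^{-(k+1)}\rho$ with centers in $B'$, one uses the doubling property at scale $\rho$ to cut $B'$ into at most $M$ controlled pieces and distributes the given centers among them, so that inside each piece the induction hypothesis bounds the number of centers by $M^k$; multiplying, one obtains at most $M^{k+1}$ centers in $B'$. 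The crucial point of verification here is that every auxiliary ball occurring in this subdivision is a sub-ball of $B'$ of radius at most $\rho/2$ with center in $B'$, hence has its double contained in $2B'\Subset\Omega$, so that both the doubling hypothesis and the induction hypothesis are legitimately available at every generation of the iteration.

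I expect the main obstacle to be exactly this bookkeeping in the inductive step. Two things have to be arranged with care: first, that the dyadic scale reduction is organized so that each ball to which the doubling property is applied still has its double compactly contained in $\Omega$ — this is what forces the whole argument to take place inside $2B$ and to proceed by successive halvings of the radius rather than in one jump; and second, that each halving of the radius multiplies the count by exactly one factor $M$, so that after $k$ steps the bound is $M^k$ and not a larger power of $M$, which is what makes the exponent in the final estimate equal to $Q=\log_2 M$. Once the dyadic estimate has been secured, combining it with the dyadic reduction of the first paragraph (via $2^{-k}\le s<2^{-(k-1)}$) immediately yields that $B$ contains at most $M^k\le M\,s^{-Q}$ centers of pairwise disjoint balls of radius $sr$, completing the proof.
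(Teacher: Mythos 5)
Your dyadic reduction in the first paragraph is fine, but the inductive step contains a genuine gap: it uses geometric doubling as a \emph{covering} property, whereas the paper defines it as a \emph{packing} property. In the passage from $k$ to $k+1$ you need to ``cut $B'=B(y,\rho)$ into at most $M$ pieces'' each of which is contained in a ball of radius $\rho/2$ (only then does the induction hypothesis, applied at half the scale, bound each piece by $M^k$). What the hypothesis actually gives is: at most $M$ points of $B'$ can be centers of pairwise disjoint balls of radius $\rho/2$. The standard maximal-subfamily argument applied to this yields, for any other point $z\in B'$, only $B(z,\rho/2)\cap B(w_j,\rho/2)\neq\emptyset$, hence $\rho(z,w_j)<\rho$; so you obtain a cover of $B'$ by at most $M$ balls of radius $\rho$, i.e.\ at the \emph{same} scale, the radius never halves, and the induction does not close. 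This is not a repairable bookkeeping issue: the intermediate claim ``at most $M^k$ centers of disjoint balls of radius $2^{-k}\rho$'' is not a consequence of the half-scale packing condition alone. For example, take the vertex set of four disjoint copies of the Petersen graph with $d(u,v)=1$ for adjacent vertices and $d(u,v)=2$ otherwise: one checks that every ball of radius $t$ contains at most $M=4$ centers of pairwise disjoint balls of radius $t/2$ (for $1<t\le 2$ the half-balls are singletons and $|B(c,t)|=4$; for $2<t\le4$ the half-balls are closed neighborhoods and Petersen has diameter $2$, so at most one center per copy), yet the ball of radius $\rho=4$ contains all $40$ points as centers of pairwise disjoint balls of radius $\rho/4=1$, while your claimed bound is $M^2=16$. (The lemma's weaker bound $Ms^{-Q}=64$ is not contradicted.)

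Note that the paper states this lemma without proof, referring implicitly to \cite{HyK,HyM}, where geometric doubling is formulated as a covering condition: every ball of radius $t$ is covered by at most $N$ balls of radius $t/2$. That formulation iterates exactly as you intended (a ball of radius $r$ is covered by $N^k$ balls of radius $2^{-k}r$), and then the packing bound follows because disjointness of the balls $B(z_i,sr)$ forces $\rho(z_i,z_j)\ge sr$, so each covering ball of radius $2^{-k}r\le sr/2$ contains at most one center, giving a bound of the form $C(N)\,s^{-\log_2 N}$. Alternatively, and closer to how geometric doubling actually arises in this paper, one can argue by volume counting with the doubling measure $\nu$: the disjoint balls $B(z_i,sr)$ are contained in $2B$, and iterating \eqref{nudoub} about each center (after the usual adjustment of the containment hypotheses so that every ball being doubled has its double well inside $\Omega$) gives $\nu(2B)\le A_\nu^{\lceil\log_2(4/s)\rceil}\nu(B(z_i,sr))$; summing over the disjoint balls bounds their number by a constant times $s^{-\log_2 A_\nu}$. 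Either route yields the stated power-law bound; neither is obtained by your induction from the half-scale packing statement, so the core of your argument needs to be replaced even though the outer dyadic reduction can be kept.
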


The following covering lemma will play an important role in the proof of Corollary \ref{sobnogain}. We note that this is a simplified version of \cite[Lemma 3.12]{CRW} but we will prove it for the reader's convenience.

\begin{lem}\label{covering} Fix any $c^*\geq1$. Then given any compact subset $E$ of  $\Omega$ for every $r>0$ small enough there  exists a finite collection of metric balls $\{B_j\}_{j=1}^N=\{B(z_j,r)\}_{j=1}^N$ centred in $E$, with $N=N(r)$, with closure contained in $\Omega$ and such that
\begin{enumerate}
\item $E\subset \bigcup_j B_j$,
\item $\left\{\frac{1}{3}B_j\right\}_j$ is a pairwise disjoint collection,
\item $\sum_{j}\chi_{c^*B_j}(x) \leq (18c^*)^Q$ for every $x\in\Omega$, with $Q$ as in Lemma \ref{geolem} (uniformly finite overlap).
\end{enumerate}
\end{lem}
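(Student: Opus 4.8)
The plan is to run the classical Vitali-type selection at scale $r$ and then estimate the overlap with Lemma~\ref{geolem}. First I would fix the scale. Since $E$ is compact and $\overline E\subset\Omega$, interpose bounded open sets with $\overline E\subset E_1$, $\overline{E_1}\subset E_0$ and $\overline{E_0}\subset\Omega$ and apply the first lemma of this section (twice, together with the equivalence of the two topologies) to produce $\delta>0$ such that, for every $0<r<\delta$ and every $z$ at $\rho$-distance at most $c^* r$ from $E$, one has $\overline{B(z,6c^* r)}\subset\Omega$; in particular $2\,B(z,3c^*r)\Subset\Omega$, which is exactly the hypothesis under which Lemma~\ref{geolem} may be invoked, and in particular every ball $B(w,r)$ centred on $E$ has closure in $\Omega$. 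Fix such an $r$.

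Next I would carry out the selection. Consider the family $\mathcal F=\{B(x,r/3):x\in E\}$. Because $\overline E$ is compact, hence totally bounded for $\rho$, any pairwise disjoint subfamily of $\mathcal F$ is finite, so one may choose a pairwise disjoint subfamily $\{B(z_j,r/3)\}_{j=1}^N$ of maximal cardinality, with $z_j\in E$, and set $B_j:=B(z_j,r)$, so that $\tfrac13 B_j=B(z_j,r/3)$. Property~(2) then holds by construction, and the closures $\overline{B_j}$ lie in $\Omega$ by the choice of $r$. For property~(1): if $x\in E$, maximality forces $B(x,r/3)$ to meet some $B(z_j,r/3)$, whence $\rho(x,z_j)<2r/3<r$ and $x\in B_j$; thus $E\subset\bigcup_j B_j$, and $N=N(r)<\infty$.

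The overlap bound~(3) is the heart of the matter. Fix $x\in\Omega$ and set $J=\{j:x\in c^*B_j\}$; if $J=\emptyset$ there is nothing to prove, so assume $J\neq\emptyset$ and pick $j_0\in J$. For every $j\in J$ one has $\rho(x,z_j)<c^*r$, so the pairwise disjoint balls $\tfrac13 B_j=B(z_j,r/3)$, $j\in J$, are all contained in $B(x,(c^*+\tfrac13)r)\subset B(x,3c^*r)=:\mathcal B$, using $c^*\ge 1$. Since $z_{j_0}\in E$ and $\rho(x,z_{j_0})<c^*r$, the choice of $r$ gives $2\mathcal B\Subset\Omega$, so Lemma~\ref{geolem} applies to $\mathcal B$: writing the common radius $r/3$ of the selected balls as $s\,r(\mathcal B)$ with $s=\tfrac1{9c^*}\in(0,1)$, their number is at most $M s^{-Q}=M(9c^*)^Q$; and since $Q=\log_2 M$, i.e. $M=2^Q$, this equals $(18c^*)^Q$. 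Hence $\#J\le(18c^*)^Q$, which is precisely $\sum_j\chi_{c^*B_j}(x)\le(18c^*)^Q$.

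The one genuinely delicate point is the first step: one must verify that $r$ can be chosen so that \emph{all} the dilated balls entering the count — notably the balls $2\mathcal B=B(\cdot,6c^*r)$ centred near $E$ — remain compactly inside $\Omega$, since otherwise Lemma~\ref{geolem} cannot legitimately be applied; this is exactly what the first lemma of the section yields once a nested chain $\overline E\subset E_1\subset\overline{E_1}\subset E_0\subset\overline{E_0}\subset\Omega$ is interposed. Everything else is the standard maximal-disjoint-family argument, and the constant $18=2\cdot 9$ merely records the doubling constant $M=2^Q$ together with the ratio between the radius $r/3$ of the selected disjoint balls and the radius $3c^*r$ of the ball that swallows them.
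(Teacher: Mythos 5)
Your proof is correct and follows essentially the same Vitali-type selection plus geometric-doubling argument as the paper: pairwise disjoint balls of radius $r/3$ centred on $E$, tripled to cover $E$, with Lemma \ref{geolem} giving the overlap constant $(18c^*)^Q$. The only differences are cosmetic: the paper extracts the disjoint family greedily from a finite subcover and, in the overlap bound, centres the swallowing ball at a selected centre $z_{k_1}\in E$ (so that only balls centred on $E$ need be compactly contained in $\Omega$), whereas you take a maximal disjoint subfamily and centre the swallowing ball at the point $x$ itself, which is why you need — and correctly arrange via the interposed domains — the slightly stronger choice of scale.
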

\begin{proof}
Fix a compact $E\subset \Omega$, and let $r>0$ be small enough so that $B(x,6c^*r)\Subset\Omega$ for every $x\in E$. Since metric balls are open sets, cover $E$ with a finite collection of balls $\{\tilde{B}_j\}_{j=1}^{m} = \{\tilde{B}(z_j,\frac{r}{3})\}_{j=1}^m$.  We extract a pairwise disjoint subcollection $\{\hat{B}_k\}_{k=1}^{N}$ of these via the following process.  Set $\hat{B}_1 = \tilde{B}_1$.  From $\{\tilde{B}_2,...,\tilde{B}_m\}$, select $j$ so that $\hat{B}_1\cap \tilde{B}_j=\emptyset$ and set $\hat{B}_2 = \tilde{B}_j$.  If there is no such $j$, stop the process.  Continue as directed until no ball can be found which has empty intersection with each of the balls previously found.  From this new collection, $\{\hat{B}_k\}_{k=1}^{N}\subset\{\tilde{B}_j\}_{j=1}^{m}$, we set $B_k = 3\hat{B}_k\supseteq \hat{B}_k$ and now claim that $\{B_k\}_{k=1}^{N}$ satisfies items $1-3$.  Item 2 is clear since the balls $\frac{1}{3}{B}_k = \hat{B}_k$ form a pairwise disjoint family by construction.  To see item $1$, let $x\in E$ and choose $1\leq j \leq m$ so that $x\in \tilde{B}_j$.  If $\tilde{B}_j = \hat{B}_k$ for some $1\leq k\leq N$ we are finished.  If not, then there  exists a $1\leq k\leq N$ so that $\tilde{B}_j\bigcap \hat{B}_k\neq \emptyset$.  Then we see that $x\in 3 \hat{B}_k = B_k$.  We conclude in either situation that $x\in \bigcup_j B_j$ proving item $1$.

To address item $3$, suppose $\{k_s\}_{s=1}^\Gamma$ is a sub collection of $\{1,..,N\}$ with the property that $$\bigcap_{s=1}^{\Gamma} c^*B_{k_s} \neq\emptyset.$$  Then $c^*B_{k_s} \subset 3c^*B_{k_1}$ for each $s$.  Since $6c^*B_{k_1}\Subset\Omega$ and $\{\frac{1}{3}{B}_{k_s}\}_s$ is pairwise disjoint, Lemma \ref{geolem} shows there are at most $\left(18c^*\right)^Q$ such balls showing that $\Gamma \leq \tilde{M} =\left(18c^*\right)^Q$.   This proves item $3$ and completes the proof of this lemma.
\end{proof}

Since $d\nu$ is doubling, we also have the following result.
\begin{lem}\label{2}
  Let $B=B(x_0,r)$ be such that $B(x_0,\frac{11}{2}r)\Subset\Omega$. Then
  \begin{equation}\label{4bis}
  1-\frac{\nu(B)}{\nu(2B)}\geq\frac{1}{1+A_\nu^{3}}>0.
  \end{equation}
\end{lem}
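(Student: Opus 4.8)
The plan is to bound $\nu(2B\setminus B)$ from below by a fixed fraction of $\nu(2B)$, by parking a metric ball of radius $r/2$ inside the annulus $2B\setminus B$ and comparing its measure to $\nu(B)$ through a bounded number of doublings. First I would produce a ``midpoint'' $y$ with $\rho(x_0,y)=\tfrac32 r$. Since $B(x_0,\tfrac{11}{2}r)\Subset\Omega$, the domain $\Omega$ cannot be contained in $B(x_0,\tfrac32 r)$ — otherwise every point of $\Omega$ would lie within $\tfrac32 r<\tfrac{11}{2}r$ of $x_0$, forcing $B(x_0,\tfrac{11}{2}r)=\Omega$, which contradicts $B(x_0,\tfrac{11}{2}r)\Subset\Omega$ because a nonempty bounded open set is never relatively compact in itself. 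Hence there is $z\in\Omega$ with $\rho(x_0,z)\ge\tfrac32 r$; applying the segment property (H1) to $x_0$ and $z$ and reparametrizing the resulting unit-speed curve so that $\gamma(0)=x_0$, the point $y:=\gamma(\tfrac32 r)$ lies in $\Omega$ and satisfies $\rho(x_0,y)=\tfrac32 r$.

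Next I would record two elementary triangle-inequality inclusions: if $\rho(y,w)<\tfrac12 r$ then $r\le \rho(x_0,w)<2r$, so $B(y,\tfrac12 r)\subset 2B\setminus B$; and if $\rho(x_0,w)<r$ then $\rho(y,w)<\tfrac52 r$, so $B(x_0,r)\subset B(y,\tfrac52 r)\subset B(y,4r)$. Moreover $B(y,r)$, $B(y,2r)$ and $B(y,4r)$ are all contained in $B(x_0,\tfrac{11}{2}r)\Subset\Omega$, so the doubling hypothesis (H2) may be applied in turn to $B(y,\tfrac12 r)$, $B(y,r)$ and $B(y,2r)$. Chaining these three doublings yields
\begin{equation*}
\nu(B(x_0,r))\le \nu(B(y,4r))\le A_\nu^{3}\,\nu\!\left(B(y,\tfrac12 r)\right)\le A_\nu^{3}\,\nu(2B\setminus B)=A_\nu^{3}\bigl(\nu(2B)-\nu(B)\bigr),
\end{equation*}
where the final equality uses $B\subset 2B$ together with $\nu(B)<\infty$, which holds since $\overline{B(x_0,\tfrac{11}{2}r)}$ is a compact subset of $\Omega$ and $m\in L^1_{loc}(\R^n)$.

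Since $\nu(2B)>0$ (no nonempty open set is $\nu$-null, as observed in the Remark following (H2)), rearranging the displayed inequality gives $(1+A_\nu^{3})\nu(B)\le A_\nu^{3}\nu(2B)$, hence $\nu(B)/\nu(2B)\le A_\nu^{3}/(1+A_\nu^{3})$, which is precisely \eqref{4bis}; positivity of the right-hand side is immediate as $A_\nu$ is a finite constant. The only step that is not pure bookkeeping is the construction of $y$: one needs the mild topological observation that the hypothesis $B(x_0,\tfrac{11}{2}r)\Subset\Omega$ really does force $\Omega$ to be ``large enough'' for the geodesic from (H1) to run out to distance $\tfrac32 r$. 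Everything else is a careful accounting of the radii $\tfrac12,\tfrac32,\tfrac52,4,\tfrac{11}{2}$ so that each ball to which (H2) is applied has its double relatively compact in $\Omega$ — which is exactly why the statement carries the factor $\tfrac{11}{2}$.
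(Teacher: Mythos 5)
Your proof is correct and follows essentially the same route as the paper's: place a ball of radius $\tfrac12 r$ centred at a point at distance $\tfrac32 r$ from $x_0$ inside the annulus $2B\setminus B$, compare $\nu(B)$ to the measure of that small ball via three doublings (the factor $A_\nu^3$, with all doubled balls inside $B(x_0,\tfrac{11}{2}r)$), and rearrange. The only difference is that you justify via (H1) the existence of a point with $\rho(x_0,y)=\tfrac32 r$, which the paper simply asserts, and you use $\nu(2B)-\nu(B)$ in place of the paper's additivity bound $\nu(2B)\geq\nu(B)+\nu(\tilde B)$; both are harmless variants.
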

\begin{proof}
  Let $x\in\Omega$ be such that $\rho(x,x_0)=\frac{3}{2}r$. Let $\tilde{B}=B(x,\frac{r}{2})$, then $\tilde{B}\subset 2B\setminus B$ and thus
\begin{equation}\label{3bis}
\nu(2B)\geq\nu(B)+\nu(\tilde{B}).
\end{equation}
Now note that $B\subset8\tilde{B}\subset B(x_0,\frac{11}{2}r)\Subset\Omega$, hence by doubling $$\nu(B)\leq\nu(2^{3}\tilde{B})\leq A_\nu^{3}\nu(\tilde{B}).$$ From \eqref{3bis} we deduce that $$\nu(2B)\geq\left(1+\frac{1}{A_\nu^{3}}\right)\nu(B),$$ whence \eqref{4bis} follows.
\end{proof}

\section{Functional setting}\label{sec3}

\begin{defn}
Let $p>1$. We denote by $QW^{1,p}_{\nu,\mu}(\Omega)=QW^{1,p}(\Omega)$ and $QW^{1,p}_{0,\nu,\mu}(\Omega)=QW^{1,p}_0(\Omega)$ the respective completions of $Lip_{loc}(\Omega)$ and $Lip_0(\Omega)$ subject to the norm
\begin{align}\label{sobonorm}
\|u\|_{QW^{1,p}(\Omega)}&=\left (\int_{\Omega}|u|^p~d\nu\right)^{1/p}+\left(\int_\Omega\langle\nabla u, Q\nabla u\rangle^{p/2}~d\mu\right)^{\frac{1}{p}}\\
\nonumber&=\left (\int_{\Omega}|u|^p~d\nu\right)^{1/p}+\left(\int_\Omega |\sqrt Q \nabla u|^p~d\mu\right)^{\frac{1}{p}}.
\end{align}
\end{defn}

While $QW^{1,p}(\Omega)$ $\left(QW^{1,p}_0(\Omega)\right)$ is defined as a collecton of equivalence classes of sequences of $Lip_{loc}(\Omega)$ $\left(Lip_0(\Omega)\right)$ functions Cauchy with respect to the norm \eqref{sobonorm}, it is well known (see \cite{DCUSR,DCUSRER,DCUSRER1,DMSR,DMSR1,MRW1,MRW2}) that it is isometrically isomorphic to a collection of pairs $(u,\vec{g}) \in L^p_\nu(\Omega)\times \mathcal{L}^p_\mu(Q;\Omega)$ endowed with the norm
\begin{align}\label{pairnorm}
\|(u,\vec{g})\|_{QW^{1,p}(\Omega)}=\left (\int_{\Omega}|u|^p~d\nu\right)^{1/p}+\left(\int_\Omega|\sqrt{Q}~ \vec{g}|^p~d\mu\right)^{\frac{1}{p}},
\end{align}
where we denote by $\mathcal{L}^p_\mu(Q;\Omega)$ the space of measurable vector valued functions $\vec{g}$ such that $$\int_\Omega|\sqrt{Q}~ \vec{g}|^p~d\mu<\infty.$$

While the generalized derivative $\vec{g}$ of $u$ may not depend on $u$ itself, we will abuse notation and denote a pair $(u,\vec{g})\in QW^{1,p}(\Omega)$ by $\vec{\bf u} = (u,\nabla u)$. We will sometimes abuse notation even further, writing $u$ in place of $\vec{\bf u} = (u,\nabla u)$ for elements of $QW^{1,p}(\Omega)$.  As a result of this notational abuse, we will often write
$$\|\vec{\bf u}\|_{QW^{1,p}(\Omega)} = \|(u,\vec{g})\|_{QW^{1,p}(\Omega)} = \|u\|_{QW^{1,p}(\Omega)}. $$

\begin{rem}\label{densesobolev}
Using a simple density argument, one may show that if (H4) and (H5) hold, then \eqref{P} and \eqref{S} also hold for any $\vec{\bf w} = (w,\vec{h})\in QW^{1,p}(B)$ and $\vec{\bf u}=(u,\vec{g})\in QW^{1,p}_0(B)$ respectively, where $B\Subset\Omega$ is any metric ball.   That is, for any such pairs,
$$\frac{1}{\nu (B)}\int_B| w- w_B|d\nu\le C_0 r(B)\frac{1}{\mu(B)}\int_B|\sqrt{Q}\vec{h}|d\mu,\textrm{ and} $$
$$ \left(\frac{1}{\nu (B)}\int_B| u|^{kp}~d\nu\right)^{\frac{1}{kp}}\le C r(B)\left(\frac{1}{\mu(B)}\int_B|\sqrt Q~\vec{g}|^p~d\mu\right)^{\frac{1}{p}}.$$

We also explicitly note that if $\Omega_0$ is a compact domain such that $\Omega_0\Subset\Omega$ and if $\vec{\bf u}=(u,\vec{g})\in QW^{1,p}_0(\Omega_0)$, then extending $\vec{\bf u}=(u,\vec{g})$ to $0$ outside $\Omega_0$ we obtain an element of $QW^{1,p}_0(\Omega)$ which we will still denote by $\vec{\bf u}=(u,\vec{g})$ and which satisfies $$\|u\|_{QW^{1,p}(\Omega)}=\|u\|_{QW^{1,p}(\Omega_0)}.$$ In this case we will say that the support of such function is contained in $\Omega_0$.
\end{rem}

As a result of our definition, there is a natural product rule available for products of Lipschitz functions with compact support with elements of $QW^{1,p}(\Omega)$.  We present this in the following proposition and also note that it will play an important role in the proof of our main result.
\begin{pro}\label{prop1.6}
Assume condition (H3), let $u\in QW^{1,p}(\Omega)$, $\varphi\in Lip_0(\Omega)$, $\varphi \geq 0$, and $\theta\geq1$. Then $\varphi^\theta u\in QW_0^{1,p}(\Omega_0)$, where $\Omega_0$ is any bounded open set containing $\operatorname{supp}\varphi$ and such that $\Omega_0\Subset\Omega$,
with $$\sqrt{Q}\nabla(\varphi^\theta u)=\varphi^\theta\sqrt{Q}\nabla u+\theta\varphi^{\theta-1}u\sqrt{Q}\nabla\varphi\quad\text{a.e. in }\Omega.$$
\end{pro}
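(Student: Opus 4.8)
The plan is to prove the product rule by approximation, reducing everything to the already-established calculus for Lipschitz functions and then passing to the limit in the norm \eqref{sobonorm}. First I would handle the case $u\in Lip_{loc}(\Omega)$: then $\varphi^\theta u\in Lip_0(\Omega)$ since $\varphi$ has compact support in $\Omega$ and $\theta\ge 1$ ensures $\varphi^\theta$ is Lipschitz (its gradient is $\theta\varphi^{\theta-1}\nabla\varphi$, which is bounded because $\varphi$ is bounded and locally Lipschitz). The classical product and chain rules give $\nabla(\varphi^\theta u)=\varphi^\theta\nabla u+\theta\varphi^{\theta-1}u\nabla\varphi$ a.e., hence the claimed identity for $\sqrt Q\nabla(\varphi^\theta u)$ holds pointwise a.e., and $\varphi^\theta u$ lies in $QW^{1,p}_0(\Omega_0)$ for any bounded open $\Omega_0$ with $\operatorname{supp}\varphi\subset\Omega_0\Subset\Omega$, using the extension-by-zero remark (Remark \ref{densesobolev}).

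Next I would pass to a general pair $\vec{\bf u}=(u,\vec g)\in QW^{1,p}(\Omega)$. By definition there is a sequence $u_k\in Lip_{loc}(\Omega)$ with $u_k\to u$ in $L^p_\nu$ and $\sqrt Q\nabla u_k\to\sqrt Q\vec g$ in $\mathcal L^p_\mu(Q;\Omega)$ (restricting attention to the fixed bounded set $\Omega_0$ where all the action happens). I claim $\varphi^\theta u_k\to\varphi^\theta u$ in $QW^{1,p}(\Omega_0)$ with limit derivative $\varphi^\theta\sqrt Q\vec g+\theta\varphi^{\theta-1}u\sqrt Q\nabla\varphi$. For the $L^p_\nu$ part this is immediate since $0\le\varphi^\theta\le\|\varphi\|_\infty^\theta$ is bounded. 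For the gradient part, write
\[
\sqrt Q\nabla(\varphi^\theta u_k)-\big(\varphi^\theta\sqrt Q\vec g+\theta\varphi^{\theta-1}u\sqrt Q\nabla\varphi\big)
=\varphi^\theta\big(\sqrt Q\nabla u_k-\sqrt Q\vec g\big)+\theta\varphi^{\theta-1}(u_k-u)\sqrt Q\nabla\varphi .
\]
The first term has $\mathcal L^p_\mu$-norm bounded by $\|\varphi\|_\infty^\theta\|\sqrt Q\nabla u_k-\sqrt Q\vec g\|_{\mathcal L^p_\mu}\to0$. The second term is where condition (H3) enters, and this is the main obstacle: I must control $\int|\theta\varphi^{\theta-1}(u_k-u)\sqrt Q\nabla\varphi|^p\,d\mu$ by something that tends to zero, but a priori I only know $u_k\to u$ in $L^p_\nu$, not in $L^p_\mu$. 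The resolution is the pointwise bound $|\sqrt Q\nabla\varphi|\le v^{1/p}\|\nabla\varphi\|_\infty$ (noted in the remark after (H6), since $|\sqrt Q\xi|\le v^{1/p}|\xi|$ with $v=\|Q\|_{op}^{p/2}$), so that
\[
\int|\theta\varphi^{\theta-1}(u_k-u)\sqrt Q\nabla\varphi|^p\,d\mu
\le (\theta\|\nabla\varphi\|_\infty)^p\|\varphi\|_\infty^{p(\theta-1)}\int_{\operatorname{supp}\varphi}|u_k-u|^p v\,d\mu,
\]
and now \eqref{ipotesi_misure} from (H3) — equivalently $vh\le Cm$ a.e. — gives $\int_{\operatorname{supp}\varphi}|u_k-u|^p v\,d\mu\le C\int_{\operatorname{supp}\varphi}|u_k-u|^p\,d\nu\to0$, applying this on the compact set $\overline{\operatorname{supp}\varphi}$.

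Finally, since $\varphi^\theta u_k\in Lip_0(\Omega)\subset QW^{1,p}_0(\Omega_0)$ (after restriction/extension as above) and $QW^{1,p}_0(\Omega_0)$ is complete, being a completion, the limit $\varphi^\theta u$ belongs to $QW^{1,p}_0(\Omega_0)$, and by uniqueness of limits in $\mathcal L^p_\mu(Q;\Omega_0)$ its generalized derivative is exactly $\varphi^\theta\sqrt Q\vec g+\theta\varphi^{\theta-1}u\sqrt Q\nabla\varphi$, i.e.
\[
\sqrt Q\nabla(\varphi^\theta u)=\varphi^\theta\sqrt Q\nabla u+\theta\varphi^{\theta-1}u\sqrt Q\nabla\varphi\quad\text{a.e. in }\Omega,
\]
using the notational convention $\vec{\bf u}=(u,\nabla u)$. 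This completes the proof; the only genuinely nontrivial point is the use of (H3) to upgrade $L^p_\nu$-convergence of the approximants to $L^p_{v\,d\mu}$-convergence near the support of $\varphi$, which is precisely what makes the cross term vanish.
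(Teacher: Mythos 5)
Your proposal is correct and follows essentially the same route as the paper: approximate $u$ by $u_j\in Lip_{loc}(\Omega)$, apply the classical Leibniz rule to $\varphi^\theta u_j\in Lip_0(\Omega_0)$, and pass to the limit, with the cross term $\theta\varphi^{\theta-1}(u_j-u)\sqrt{Q}\nabla\varphi$ controlled via $|\sqrt{Q}\nabla\varphi|\le v^{1/p}\|\nabla\varphi\|_\infty$ and the (H3) inequality $vh\le Cm$, which converts $L^p_\nu$-convergence into convergence in $L^p(v\,d\mu)$ on $\operatorname{supp}\varphi$. This is exactly the paper's argument, so no further comparison is needed.
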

\begin{proof}
Let $u_j$ be a sequence of functions in $Lip_{\text{loc}}(\Omega)$ such that
\begin{align*}
   (i)~ & u_j\rightarrow u\quad\text{in }L^p(\Omega,d\nu)\text{ and }\nu-a.e.\\
    (ii)~ & \sqrt{Q}\nabla u_j\rightarrow \sqrt{Q}\nabla u \quad\text{in }(L^p(\Omega,d\mu))^n\text{ and }\mu-a.e.
\end{align*}
If $\Omega_0$ is any bounded open set containing $\operatorname{supp}\varphi$ and such that $\overline{\Omega}_0\subset\Omega$, then $v_j:=\varphi^\theta u_j\in Lip_0(\Omega_0)$. Since $$\int_{\Omega_0} |v_j-\varphi^\theta u|^p\,d\nu\leq C\int_\Omega|u_j-u|^p\,d\nu$$ we immediately have that $v_j$ converges to $\varphi^\theta u$ in $L^p(\Omega_0,d\nu)$ and $\nu$-a.e.

Since $v_j$ is differentiable Lebesgue-a.e. with
$$
\sqrt{Q}\nabla v_j=\varphi^\theta\sqrt{Q}\nabla u_j +\theta\varphi^{\theta-1}u_j\sqrt{Q}\nabla\varphi
$$
Lebesgue-a.e., by (H3) we see that
$$
\sqrt{Q}\nabla v_j\rightarrow \varphi^\theta\sqrt{Q}\nabla u +\theta\varphi^{\theta-1}u\sqrt{Q}\nabla\varphi
$$
$\mu$-a.e. Moreover, using (H3) again and recalling that $v=\|Q\|^{p/2}_{op}$
\begin{align*}
    &\int_{\Omega_0}|\sqrt{Q}\nabla v_j-(\varphi^\theta\sqrt{Q}\nabla u +\theta\varphi^{\theta-1}u\sqrt{Q}\nabla\varphi)|^p\,d\mu\\
    &\leq C\int_{\Omega_0}|\varphi^\theta(\sqrt{Q}\nabla u_j-\sqrt{Q}\nabla u)|^p\,d\mu +C\int_{\Omega_0}|\varphi^{\theta-1}(u_j-u)\sqrt{Q}\nabla\varphi|^p\,d\mu\\
    &\leq C\int_{\Omega_0}|\sqrt{Q}\nabla u_j-\sqrt{Q}\nabla u|^p\,d\mu +C\int_{\Omega_0}|u_j-u|^p\,vd\mu\\
    &\leq C\int_{\Omega_0}|\sqrt{Q}\nabla u_j-\sqrt{Q}\nabla u|^p\,d\mu +C\int_{\Omega_0}|u_j-u|^p\,d\nu.
\end{align*}
Hence $$\sqrt{Q}\nabla v_j\rightarrow \varphi^\theta\sqrt{Q}\nabla u +\theta\varphi^{\theta-1}u\sqrt{Q}\nabla\varphi$$
in $(L^p(\Omega_0,d\mu))^n$ and the proof is complete.
\end{proof}

The following propositions provide useful calculus results in the degenerate Sobolev space $QW^{1,p}(\Omega)$. Similar results can be found in Lemmas 4.1 and 4.2 in \cite{MRW1}, in case $\mu$ and $\nu$ are Lebesgue measure. Since the only change required here is the incorporation of the measures $\mu$ and $\nu$ and since the proofs go through taking into account the absolute continuity assumption in (H3), we will omit them.

\begin{pro}\label{appendix1}
Assume condition (H3). Let $(u, \nabla u) \in QW^{1,p}(\Omega)$ and $\phi\in C^1(\mathbb{R})$ with $\phi^\prime\in L^\infty(\mathbb{R})$. Then $(\phi(u),\nabla\phi(u))\in QW^{1,p}(\Omega)$ with
$\displaystyle{
\sqrt{Q}\nabla\big(\phi(u)\big)=\phi^\prime(u)\sqrt{Q}\nabla u,
\ \mu-a.e. in \ \Omega.
}$
\end{pro}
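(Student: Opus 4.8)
The plan is to mimic the density argument in the proof of Proposition~\ref{prop1.6}, replacing the product $\varphi^\theta u$ by the composition $\phi(u)$ and again exploiting the absolute continuity $d\mu\ll d\nu$ from (H3) to transfer almost-everywhere convergence between the two measures. First I would fix a sequence $u_j\in Lip_{loc}(\Omega)$ representing $(u,\nabla u)$ so that, after passing to a subsequence, $u_j\to u$ in $L^p(\Omega,d\nu)$ and $\nu$-a.e., while $\sqrt Q\nabla u_j\to\sqrt Q\nabla u$ in $(L^p(\Omega,d\mu))^n$ and $\mu$-a.e. Since $\phi\in C^1(\R)$ with $\phi'\in L^\infty(\R)$, the function $\phi$ is globally Lipschitz, so $\phi(u_j)\in Lip_{loc}(\Omega)$, and the classical chain rule for a $C^1$ function composed with a Lipschitz function yields $\nabla\phi(u_j)=\phi'(u_j)\nabla u_j$ Lebesgue-a.e.; hence $\sqrt Q\nabla\phi(u_j)=\phi'(u_j)\sqrt Q\nabla u_j$ Lebesgue-a.e. and therefore $\mu$-a.e.

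Next I would establish convergence of the two components. For the zeroth order term, $|\phi(s)-\phi(t)|\le\|\phi'\|_\infty|s-t|$ gives $\|\phi(u_j)-\phi(u)\|_{L^p(\Omega,d\nu)}\le\|\phi'\|_\infty\|u_j-u\|_{L^p(\Omega,d\nu)}\to0$, and the bound $|\phi(u)|\le|\phi(0)|+\|\phi'\|_\infty|u|$ shows in addition that $\phi(u)\in L^p(\Omega,d\nu)$. For the gradient term I would split
\[
\phi'(u_j)\sqrt Q\nabla u_j-\phi'(u)\sqrt Q\nabla u=\phi'(u_j)\bigl(\sqrt Q\nabla u_j-\sqrt Q\nabla u\bigr)+\bigl(\phi'(u_j)-\phi'(u)\bigr)\sqrt Q\nabla u.
\]
The first summand is dominated in modulus by $\|\phi'\|_\infty|\sqrt Q\nabla u_j-\sqrt Q\nabla u|$, which tends to $0$ in $(L^p(\Omega,d\mu))^n$ by the choice of the $u_j$. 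For the second summand, the point — and this is exactly where (H3) enters — is that $d\mu\ll d\nu$ upgrades $u_j\to u$ $\nu$-a.e. to $u_j\to u$ $\mu$-a.e.; since $\phi'$ is continuous, $\bigl(\phi'(u_j)-\phi'(u)\bigr)\sqrt Q\nabla u\to0$ $\mu$-a.e., while $\bigl|\bigl(\phi'(u_j)-\phi'(u)\bigr)\sqrt Q\nabla u\bigr|^p\le(2\|\phi'\|_\infty)^p|\sqrt Q\nabla u|^p\in L^1(\Omega,d\mu)$, so dominated convergence forces this term to tend to $0$ in $(L^p(\Omega,d\mu))^n$ as well.

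Combining the two limits, the pairs $\bigl(\phi(u_j),\phi'(u_j)\sqrt Q\nabla u_j\bigr)$ converge, hence form a Cauchy sequence in the $QW^{1,p}(\Omega)$ norm, with limit $\bigl(\phi(u),\phi'(u)\sqrt Q\nabla u\bigr)$; by completeness of $QW^{1,p}(\Omega)$ and the identification of the generalized gradient, this limit pair belongs to $QW^{1,p}(\Omega)$ and satisfies $\sqrt Q\nabla(\phi(u))=\phi'(u)\sqrt Q\nabla u$ $\mu$-a.e., as desired. I expect the only genuine subtlety to be the handling of the term $\bigl(\phi'(u_j)-\phi'(u)\bigr)\sqrt Q\nabla u$: because $\phi'$ is merely continuous, one cannot control this difference by a triangle-type inequality and must instead pass through (H3) to obtain $\mu$-a.e. convergence and then invoke dominated convergence. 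The remaining steps are the routine completion and density bookkeeping already carried out in the proof of Proposition~\ref{prop1.6}.
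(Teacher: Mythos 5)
Your argument is correct and is essentially the proof the paper has in mind: it omits the details by pointing to Lemma 4.1 of \cite{MRW1}, noting that the only change is carrying the measures $\mu,\nu$ and using the absolute continuity in (H3), which is exactly how you use (H3) to upgrade $\nu$-a.e.\ to $\mu$-a.e.\ convergence before applying dominated convergence, in parallel with the paper's own density argument for Proposition \ref{prop1.6}. The only implicit point worth flagging is that your bound $|\phi(u)|\leq|\phi(0)|+\|\phi'\|_\infty|u|$ gives $\phi(u)\in L^p(\Omega,d\nu)$ because $\nu(\Omega)<\infty$ (as $\Omega$ is bounded and $m\in L^1_{loc}(\mathbb{R}^n)$), which is indeed available here.
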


\begin{pro}\label{appendix2}
Assume condition (H3). Let $u\in QW^{1,p}(\Omega)$ and $k\in\R$. Then  we have $(\bar{u},\nabla\bar{u})\in QW^{1,p}(\Omega)$ with $\bar{u}=|u|+k$ and
\begin{equation}\label{26}
\sqrt{Q(x)}\nabla\bar{u}(x)=\begin{cases}
                               \,\sqrt{Q(x)}\nabla u(x)\,\,\,\qquad\text{if }u(x)\geq0,\\
                               -\sqrt{Q(x)}\nabla u(x)\qquad\text{if }u(x)<0
                            \end{cases}\quad\mu-\text{a.e. in }\Omega.
\end{equation}
\end{pro}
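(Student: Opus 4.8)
The plan is to reduce everything to the absolute value and then to regularize it. First, adding the constant $k$ is harmless: if one shows $w:=|u|\in QW^{1,p}(\Omega)$ with generalized gradient $\vec g$ (that is, $(w,\vec g)\in QW^{1,p}(\Omega)$), then applying Proposition \ref{appendix1} to $w$ with $\phi(t)=t+k$ — or simply translating a $Lip_{loc}(\Omega)$ sequence approximating $w$ — gives $(w+k,\vec g)\in QW^{1,p}(\Omega)$. So it is enough to prove \eqref{26} for $\bar u=|u|$, i.e. with $k=0$, and the work is to produce the pair $|u|\in QW^{1,p}(\Omega)$ together with the stated formula for $\sqrt{Q}\nabla|u|$.

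Next I would regularize the absolute value. For $\varepsilon>0$ set $\phi_\varepsilon(t)=\sqrt{t^2+\varepsilon^2}-\varepsilon$, so that $\phi_\varepsilon\in C^1(\R)$, $0\le\phi_\varepsilon(t)\le|t|$, $\phi_\varepsilon(t)\to|t|$ as $\varepsilon\to0^+$, $\phi_\varepsilon'(t)=t/\sqrt{t^2+\varepsilon^2}$ with $\|\phi_\varepsilon'\|_{L^\infty(\R)}\le1$, and $\phi_\varepsilon'(t)\to\operatorname{sgn}(t)$ pointwise, where $\operatorname{sgn}(t):=\chi_{\{t>0\}}-\chi_{\{t<0\}}$ (so $\operatorname{sgn}(0)=0$). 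By Proposition \ref{appendix1}, $(\phi_\varepsilon(u),\nabla\phi_\varepsilon(u))\in QW^{1,p}(\Omega)$ with $\sqrt{Q}\nabla(\phi_\varepsilon(u))=\phi_\varepsilon'(u)\sqrt{Q}\nabla u$ $\mu$-a.e.; hypothesis (H3), through $d\mu\ll d\nu$, is what lets one speak of $\phi_\varepsilon(u)$ and $\phi_\varepsilon'(u)$ $\mu$-a.e. from $u$ being defined only $\nu$-a.e. Letting $\varepsilon\to0^+$, dominated convergence yields $\phi_\varepsilon(u)\to|u|$ in $L^p(\Omega,d\nu)$ (dominant $|u|$) and $\phi_\varepsilon'(u)\sqrt{Q}\nabla u\to\operatorname{sgn}(u)\sqrt{Q}\nabla u$ in $(L^p(\Omega,d\mu))^n$ (dominant $|\sqrt{Q}\nabla u|$). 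Since $QW^{1,p}(\Omega)$ is complete and, via the identification recalled in Section \ref{sec3}, limits of Cauchy sequences are again pairs, this gives $\bar u=|u|\in QW^{1,p}(\Omega)$ with $\sqrt{Q}\nabla\bar u=\operatorname{sgn}(u)\,\sqrt{Q}\nabla u$ $\mu$-a.e.

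Finally I would recognise $\operatorname{sgn}(u)\sqrt{Q}\nabla u$ as the piecewise expression in \eqref{26}: the two agree $\mu$-a.e. on $\{u>0\}$ and on $\{u<0\}$, and on $\{u=0\}$ the formula just obtained equals $0$, so the identification is complete once one knows the standard fact that $\sqrt{Q}\nabla u=0$ $\mu$-a.e. on $\{u=0\}$ (then both sides vanish there). This last fact is the main obstacle: classically it is Stampacchia's theorem applied to Lipschitz approximants, but in the present abstract degenerate setting the generalized gradient need not be uniquely determined by the function alone, so one cannot simply compare first components; it can still be reached by the same circle of regularization ideas — for instance by writing $u=u^+-u^-$ as an identity in $QW^{1,p}(\Omega)$, with the positive and negative parts produced by the argument above — but this step requires the most care. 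This is exactly the content of Lemmas 4.1--4.2 of \cite{MRW1} in the unweighted case, whose proofs carry over here once (H3) is invoked. Combining the three steps establishes \eqref{26}.
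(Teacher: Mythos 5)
Your overall strategy (reduce to $k=0$, regularize $|\cdot|$, apply Proposition \ref{appendix1}, pass to the limit by dominated convergence and completeness of the space of pairs) is indeed the intended one --- the paper itself omits the proof and defers to Lemmas 4.1--4.2 of \cite{MRW1} --- and your first two steps are correct. The genuine gap is in your third step. With the even mollifier $\phi_\varepsilon(t)=\sqrt{t^2+\varepsilon^2}-\varepsilon$ you obtain the pair $\bigl(|u|,\operatorname{sgn}(u)\nabla u\bigr)$ with $\operatorname{sgn}(0)=0$, which is \emph{not} formula \eqref{26}: that formula prescribes $+\sqrt{Q}\nabla u$ on all of $\{u\geq0\}$, including the zero set. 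To bridge the two you invoke ``the standard fact that $\sqrt{Q}\nabla u=0$ $\mu$-a.e.\ on $\{u=0\}$'', but in this framework that is not a standard fact and cannot be proved from (H1)--(H6): applied to pairs of the form $(0,\vec g)\in QW^{1,p}(\Omega)$ it says precisely that generalized gradients are unique, which the paper explicitly does not assume (``the generalized derivative $\vec g$ of $u$ may not depend on $u$ itself'') and which is known to fail for completion-defined degenerate Sobolev spaces; note also that (H4)--(H5) only bound the function by the gradient, never the reverse, so no axiom forces such vanishing. Your suggested repair via $u=u^+-u^-$ does not close this either, since any symmetric regularization of the positive/negative parts reproduces the same ambiguity on $\{u=0\}$. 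So as written you prove a (correct, and for the later estimates sufficient) variant of the proposition, but not \eqref{26}.

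The missing idea is simply to choose a one-sided regularization whose derivative is identically $1$ on $[0,\infty)$, so that the convention on $\{u=0\}$ comes out of the limit for free instead of requiring a Stampacchia-type theorem. For instance set $\phi_\varepsilon'(t)=1$ for $t\geq0$, $\phi_\varepsilon'(t)=-1$ for $t\leq-\varepsilon$, linear in between, and $\phi_\varepsilon(t)=\int_0^t\phi_\varepsilon'(s)\,ds$; then $\phi_\varepsilon\in C^1(\R)$, $|\phi_\varepsilon'|\leq1$, $|\phi_\varepsilon(t)|\leq|t|$, $\phi_\varepsilon(t)\to|t|$, and $\phi_\varepsilon'(t)\to+1$ on $[0,\infty)$, $\to-1$ on $(-\infty,0)$. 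Proposition \ref{appendix1} plus your dominated-convergence and completeness argument then yields exactly \eqref{26}, and adding the constant $k$ is harmless as you observed. (Equivalently, one can first produce $u^+$ with gradient $\chi_{\{u\geq0\}}\sqrt{Q}\nabla u$ by the same one-sided trick and then use the identity $|u|=2u^+-u$ in the vector space of pairs.) This, rather than a vanishing-on-level-sets lemma, is the content of the argument in \cite{MRW1} that the paper imports.
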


\begin{pro}\label{corHighInt}
Assume (H3), (H5), (H6). Then for every open subset $\Omega_0$ with $\Omega_0\Subset\Omega$ there exists $C>0$ such that
$$
\left(\int_{\Omega_0} |u|^{kp}\,d\nu\right)^\frac{1}{kp}\leq C\|u\|_{QW^{1,p}(\Omega)}
$$
for every $u\in QW^{1,p}(\Omega)$.
\end{pro}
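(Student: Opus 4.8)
The plan is to cover the compact set $\Omega_0$ by a finite family of metric balls using Lemma \ref{covering}, apply the local Sobolev inequality (H5) on each ball in the form extended to $QW^{1,p}_0$ via Remark \ref{densesobolev}, and then sum the resulting local estimates using the uniformly finite overlap property. The role of (H6) is to produce, on each ball, the cut-off functions needed to pass from $u\in QW^{1,p}(\Omega)$ (no compact support) to a compactly supported function to which (H5) applies.

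First I would fix $r>0$ small enough that Lemma \ref{covering} applies to $\Omega_0$ with $c^*$ equal to, say, $1$ (or whatever constant is convenient), obtaining balls $\{B_j=B(z_j,r)\}_{j=1}^N$ with $\Omega_0\subset\bigcup_j B_j$, the balls $\{\tfrac13 B_j\}$ pairwise disjoint, and $\sum_j\chi_{B_j}\le(18)^Q$, all with $\overline{B_j}\subset\Omega$; shrinking $r$ further I can also assume each $2B_j\Subset\Omega$ so (H2) and (H5) are available. On each $B_j$ pick from (H6) a cut-off $\varphi_j=\varphi_{j,1}\in Lip_0(B_j)$ with $0\le\varphi_j\le1$, $\varphi_j\equiv1$ on $\tau B_j$, and $\bigl\||\sqrt{Q}\nabla\varphi_j|/v^{1/p}\bigr\|_\infty\le \tilde C/r^\gamma$ (just the first element of the accumulating sequence is enough here). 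By Proposition \ref{prop1.6} with $\theta=1$, $\varphi_j u\in QW^{1,p}_0(B_j)$ with $\sqrt{Q}\nabla(\varphi_j u)=\varphi_j\sqrt{Q}\nabla u+u\sqrt{Q}\nabla\varphi_j$ a.e. Applying (H5) (via Remark \ref{densesobolev}) to $\varphi_j u$ and then dropping the weights $1/\nu(B_j)$, $1/\mu(B_j)$ in favour of the raw integrals, one gets
\begin{equation*}
\left(\int_{B_j}|\varphi_j u|^{kp}\,d\nu\right)^{1/kp}\le C(r,j)\left(\int_{B_j}|\sqrt{Q}\nabla(\varphi_j u)|^{p}\,d\mu\right)^{1/p},
\end{equation*}
and then estimating $|\sqrt{Q}\nabla(\varphi_j u)|^p\le 2^{p-1}\bigl(\varphi_j^p|\sqrt{Q}\nabla u|^p+|u|^p|\sqrt{Q}\nabla\varphi_j|^p\bigr)$ together with the (H6) bound $|\sqrt{Q}\nabla\varphi_j|^p\le (\tilde C/r^\gamma)^p\,v$ and the fact that $v\,d\mu=vh\,dx\le C\,m\,dx=C\,d\nu$ from (H3), I bound the right-hand side by $C(r)\bigl(\int_{B_j}|\sqrt{Q}\nabla u|^p\,d\mu+\int_{B_j}|u|^p\,d\nu\bigr)\le C(r)\|u\|_{QW^{1,p}(\Omega)}^p$. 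Here I must be careful that the constants $C_1,k$ in (H5) are uniform in the ball, but the ball-dependent factors $\nu(B_j)^{1/kp}$, $\mu(B_j)^{-1/p}$, $r(B_j)$ only enter through finitely many balls all of radius $r$, so for fixed $r$ they are harmless; the doubling hypotheses (H2) guarantee $\nu(B_j),\mu(B_j)$ are positive, which is what makes the division legitimate.

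To conclude, since $\varphi_j\equiv1$ on $\tau B_j$ and $\{\tau B_j\}$ already covers $\Omega_0$ — indeed $\Omega_0\subset\bigcup_j B_j$ and, after a harmless further shrinking of $r$ combined with a relabeling, one may arrange the enlarged covering so that $\tau$-balls still cover, or simply invoke the covering lemma with the dilation absorbed — I have $\int_{\tau B_j}|u|^{kp}\,d\nu\le\int_{B_j}|\varphi_j u|^{kp}\,d\nu$, hence
\begin{equation*}
\int_{\Omega_0}|u|^{kp}\,d\nu\le\sum_{j=1}^N\int_{B_j}|\varphi_j u|^{kp}\,d\nu\le C(r)\,N\,\|u\|_{QW^{1,p}(\Omega)}^{kp},
\end{equation*}
where in the first inequality the finite-overlap bound $\sum_j\chi_{B_j}\le(18)^Q$ controls the multiplicity (alternatively one just uses subadditivity since $N$ is finite). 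Taking $(kp)$-th roots gives the claim with $C=C(r,N,\Omega_0)$. The main obstacle, and the only genuinely delicate point, is the bookkeeping of constants: ensuring that the covering produced by Lemma \ref{covering} can be taken with the $\tau$-contracted balls still covering $\Omega_0$ (so that the cut-offs from (H6), which equal $1$ only on $\tau B_j$, suffice), and checking that every ball used has its double compactly contained in $\Omega$ so that (H2), (H5), (H6) all apply simultaneously — this is where one pays attention to the "$r$ small enough" clauses and to the swallowing property $B_2\subset 3B_1$ recorded in Section \ref{sec2}.
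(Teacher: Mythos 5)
Your proposal is correct and follows essentially the same route as the paper: cover $\Omega_0$ by finitely many metric balls via Lemma \ref{covering}, multiply by the (H6) cut-offs, apply (H5) (through Remark \ref{densesobolev}) to $\varphi_j u$ using the product rule of Proposition \ref{prop1.6}, and convert the $|u|^p v\,d\mu$ term to $|u|^p\,d\nu$ via (H3), summing over the finitely many balls. The only cosmetic difference is that the paper covers $\Omega_0$ directly by the balls $B_{\tau r}(x_j)$ and uses the (H6) cut-offs on the enlarged balls $B_{r}(x_j)$, which is exactly the ``dilation absorbed into the covering lemma'' fix you indicate for making the $\tau$-contracted balls cover $\Omega_0$.
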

\begin{proof}
We choose a small enough $r>0$ and use Lemma \ref{covering} to cover the set $\Omega_0$ with a finite number of balls $B_{\tau r}(x_1),\ldots, B_{\tau r}(x_K)$, with $\tau$ as in (H6). For each of the balls $B_{r}(x_1),\ldots, B_{r}(x_K),$ by (H6) we can find a Lipschitz function which is identically one on the ball of radius $\tau r$ and with support in the ball of radius $r$. Then we use (H5) in each of the balls and we conclude using Proposition \ref{prop1.6} with $\theta=1$ and (H3):
\begin{eqnarray*}
&&\left(\int_{\Omega_0}|u|^{kp}\,d\nu\right)^\frac{1}{kp}\\&&\leq\left(\sum_{j=1}^K\int_{B_{r}(x_j)}|\varphi_ju|^{kp}\,d\nu\right)^\frac{1}{kp}\\
&&\leq C\sum_{j=1}^K\left(\int_{B_{r}(x_j)}|\varphi_ju|^{kp}\,d\nu\right)^\frac{1}{kp}\\
&&\leq C\sum_{j=1}^K\left(\int_{B_{r}(x_j)}|\sqrt{Q}\nabla(\varphi_ju)|^{p}\,d\mu\right)^\frac{1}{p}\\
&&\leq C\sum_{j=1}^K\left(\int_{B_{r}(x_j)}|\varphi_j\sqrt{Q}\nabla u|^{p}\,d\mu+\int_{B_{r}(x_j)}|u\sqrt{Q}\nabla \varphi_j|^{p}\,d\mu\right)^\frac{1}{p}\\
&&\leq C\sum_{j=1}^K\left(\int_{B_{r}(x_j)}|\sqrt{Q}\nabla u|^{p}\,d\mu+\int_{B_{r}(x_j)}|u|^{p}v\,d\mu\right)^\frac{1}{p}\\
&&\leq C\sum_{j=1}^K\left(\int_{B_{r}(x_j)}|\sqrt{Q}\nabla u|^{p}\,d\mu+\int_{B_{r}(x_j)}|u|^{p}\,d\nu\right)^\frac{1}{p}\\
&&\leq C\|u\|_{QW^{1,p}(\Omega)}
\end{eqnarray*}
\end{proof}

\section{Fefferman-Phong embedding}\label{sec4}

The main tools to be exploited in our proofs of local boundedness and continuity of weak solutions to equations of the type \eqref{Qppoisson} are a Subrepresentaiton inequality and also a Fefferman-Phong inequality that we will develop now.  We recall the following
\begin{thm}[\cite{LW}, Theorem $A^*$]\label{rf}
If (H1), (H2), and (H4) hold, then for any ball $B\Subset\Omega$  one has
\begin{eqnarray}\label{subrep}
\left|u(x)-u_{B}\right| &\leq& C\displaystyle\int_B \frac{\rho(x,y)}{\mu(B(x,\rho(x,y)))}|\sqrt{Q}\nabla u(y)|\,d\mu(y)
\end{eqnarray}
for $d\nu$-a.e.  $x\in B$, for every $u\in QW^{1,p}(\Omega)$.
\end{thm}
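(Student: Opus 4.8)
The plan is to derive \eqref{subrep} from the $(1,1)$-Poincar\'e inequality (H4) by the standard Franchi--Lu--Wheeden telescoping scheme. By the density statement in Remark \ref{densesobolev} it suffices to prove the estimate for $u\in Lip_{loc}(\Omega)$ and then pass to the limit along an approximating sequence; this passage uses that the right-hand side of \eqref{subrep} is a fractional-integral operator in $(\Omega,\rho,\mu)$ with suitable $L^q$-boundedness properties, so that $L^p(\mu)$-convergence of $\sqrt Q\nabla u_j$ forces $\nu$-a.e.\ convergence of the corresponding right-hand sides along a subsequence.

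So fix a ball $B=B(x_0,R)\Subset\Omega$, a function $u\in Lip_{loc}(\Omega)$, and a point $x\in B$. Using the segment property (H1), join $x$ to $x_0$ by a curve $\gamma$ with $\rho(\gamma(s),\gamma(t))=|s-t|$, of length $L=\rho(x,x_0)<R$, and set $y_k=\gamma(2^{-k}L)$ and $B_k=B(y_k,c\,2^{-k}L)$ for $k\ge0$, where $c>1$ is a fixed constant chosen so that $x\in B_k$ for every $k$, $B_0$ is comparable to $B$, and (with a little care in the choice of the chain near $k=0$) $B_k\subseteq B$ for every $k$. Since $\rho(y_k,y_{k+1})=2^{-k-1}L$ and the radii are comparable, consecutive balls $B_k,B_{k+1}$ are each contained in a fixed dilate of the other, and (H2) gives $\mu(B_k)\approx\mu(B_{k+1})$ and $\nu(B_k)\approx\nu(B_{k+1})$ with constants independent of $k$.

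Because $\nu$ is doubling and $x\in B_k$ with $r(B_k)\to0$, the Lebesgue differentiation theorem gives $u_{B_k}\to u(x)$ for $\nu$-a.e.\ $x\in B$, so that $|u(x)-u_B|\le\sum_{k\ge0}|u_{B_{k+1}}-u_{B_k}|$. For each link, comparing both averages to the average over a fixed dilate $\widehat B_k\supseteq B_k\cup B_{k+1}$ and then invoking (H4) on $\widehat B_k$ together with (H2) yields $|u_{B_{k+1}}-u_{B_k}|\le C\,r(B_k)\,\mu(B_k)^{-1}\int_{\widehat B_k}|\sqrt Q\nabla u|\,d\mu$. Summing over $k$ and interchanging sum and integral gives $|u(x)-u_B|\le C\int_B|\sqrt Q\nabla u(y)|\,K(x,y)\,d\mu(y)$, where $K(x,y)=\sum_{k:\,y\in\widehat B_k}r(B_k)\,\mu(B_k)^{-1}$.

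It then remains to show $K(x,y)\le C\,\rho(x,y)\,\mu(B(x,\rho(x,y)))^{-1}$. If $y\in\widehat B_k$ then $\rho(x,y)\le C\,r(B_k)$, so the sum defining $K(x,y)$ runs over the scales $r(B_k)\gtrsim\rho(x,y)$, the smallest of which, $r(B_{k_0})$, is comparable to $\rho(x,y)$; one then sums a geometric-type series of the quantities $r(B_k)\,\mu(B_k)^{-1}$ down to the scale $\rho(x,y)$ and dominates it by its last term $r(B_{k_0})\,\mu(B_{k_0})^{-1}\approx\rho(x,y)\,\mu(B(x,\rho(x,y)))^{-1}$, using the doubling and the reverse-doubling-of-order-$1$ properties in (H2). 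This summation step is the heart of the matter and the delicate point, and I refer to \cite{LW} for its precise execution. Combining everything gives \eqref{subrep} for Lipschitz $u$, and the density approximation of the first paragraph extends it to all of $QW^{1,p}(\Omega)$. I expect the main obstacles to be precisely this kernel estimate, the companion bookkeeping needed to keep the chain $\{B_k\}$ inside $B$ with uniformly controlled overlap, and the careful justification of the limiting passage, which relies on the mapping properties of the fractional integral appearing on the right-hand side of \eqref{subrep}.
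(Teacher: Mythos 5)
The paper does not actually prove this statement: it is quoted from \cite{LW} (Theorem $A^*$), and the remark following it records the only ingredient the paper adds, namely that (H1) yields the chains of balls of \cite{LW}, Theorem D, satisfying properties (1)--(6). Your sketch follows the same telescoping philosophy, but at the decisive step it deviates from \cite{LW} in a way that breaks the argument. With your chain $B_k=B(y_k,c\,2^{-k}L)$, $c>1$, every ball contains $x$, so for a fixed $y$ with $\rho(x,y)\ll L$ one has $y\in\widehat B_k$ for \emph{all} scales $k$ with $r(B_k)\gtrsim\rho(x,y)$; the kernel is then a sum of roughly $\log\bigl(L/\rho(x,y)\bigr)$ terms. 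Doubling together with reverse doubling of order exactly $1$ only gives that each term is bounded by a constant times the last one, not geometric decay, so the claim that the series is ``dominated by its last term'' fails: for $\mu=$ Lebesgue measure on a segment of $\mathbb{R}$ (reverse doubling of order $1$ with equality) all the terms $r(B_k)/\mu(B_k)$ are equal, and your bound degenerates to $K(x,y)\lesssim \rho(x,y)\bigl(1+\log\frac{L}{\rho(x,y)}\bigr)/\mu\bigl(B(x,\rho(x,y))\bigr)$, which is strictly weaker than \eqref{subrep}. Deferring ``the precise execution'' to \cite{LW} does not close this, because \cite{LW} does not sum such a series at all.

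What \cite{LW} (and the paper's remark) actually use is a chain with the \emph{two-sided} comparability $\rho(x,y)\approx r(B_k)$ for $y\in B_k$ (property (3)) together with bounded overlap $\sum_k\chi_{B_k}\le c$ (property (6)). Then each link estimate $|u_{B_k}-u_{B_{k+1}}|\lesssim \frac{r(B_k)}{\mu(B_k)}\int_{B_k}|\sqrt Q\nabla u|\,d\mu$ can be rewritten, pointwise in $y\in B_k$, as $\int_{B_k}\frac{\rho(x,y)}{\mu(B(x,\rho(x,y)))}|\sqrt Q\nabla u(y)|\,d\mu(y)$ up to constants (only doubling of $\mu$ is needed for $\mu(B_k)\approx\mu(B(x,\rho(x,y)))$), and summing in $k$ uses bounded overlap instead of any geometric series or reverse-doubling estimate. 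So the missing idea is precisely the choice of chain: balls accumulating at $x$ but kept at distance comparable to their radius from $x$, with uniformly bounded overlap, rather than balls containing $x$ at every scale. Your reduction to Lipschitz functions and the subsequent limiting passage is also only asserted (via unproved mapping properties of the fractional integral), but that is a secondary issue compared with the kernel estimate.
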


\begin{rem}
Theorem \ref{rf} relies on the construction of special chains of metric balls, which can be carried out assuming condition (H1). Indeed one can show that the following property is a consequence of (H1), see Theorem D in \cite{LW}: let $B_0\Subset\Omega$ be a metric ball, then for any $x\in B_0$ there exists a chain of balls $\{B_k\}_{k\geq1}$ such that
\begin{enumerate}
    \item $B_k\subset B_0$ and $\rho(B_k,x)\rightarrow0$ as $k\rightarrow\infty$;
    \item $r(B_1)\approx r(B_0)$ and $r(B_k)\rightarrow0$ as $k\rightarrow\infty$
    \item if $y\in B_k$, then $\rho(x,y)\approx r(B_k)$
    \item $B_k\cap B_{k-1}$ contains a ball $S_k$ with $r(S_k)\approx r(B_k)\approx r(B_{k+1})$
    \item if $j<k$, then $B_k\subset cB_j$
    \item $\{B_k\}_{k\geq1}$ has bounded overlap, i.e. $\sum_k\chi_{B_k}(y)\leq c$ for all $y$
\end{enumerate}
where the constants of equivalence in (2), (3), (4) and the constants $c$ in (5) and (6) are independent of $x,k,j, B_0$, but the chain of balls $\{B_k\}_{k\geq1}$ depends on $x$.

We explicitly note that one could drop condition (H1), replacing it with the axiomatic assumption
\begin{itemize}
    \item[{\bf (H1$^\prime$)}] given any metric ball $B_0\Subset\Omega$, for every $x\in B_0$ there exists a chain of balls $\{B_k\}_{k\geq1}$ that satisfy conditions (1)--(6).
\end{itemize}
\end{rem}

This subrepresentation formula is useful in developing a weighted Fefferman-Phong inequality where we exploit the following class of functions.

\begin{defn}[\textit{Stummel-Kato}]\label{def1}
Let $V\in L^1_{loc}(\Omega)$, $p>1$, $r>0$ we set the function
$\Phi_V(r)$ as
$$
\sup_{x\in\Omega}\left(\int_{B(x,r)}
\!\!
\frac{\rho(x,y)}{\mu(B(x,\rho(x,y)))}\left(\int_{B(x,r)}
\!\!\!\!|V(z)|\frac{\rho(z,y) d\nu(z)}{\mu(B(z,\rho(z,y)))}\,\right)^\frac{1}{p-1}
\!\!\!\!d\mu(y)\right)^{p-1}
$$
We say that
\begin{enumerate}
    \item $V\in \tilde{M}_p(\Omega)$ if $\Phi_V(r)$ is finite for any $r>0$;
    \item $V\in M_p(\Omega)$ if $V\in \tilde{M}_p(\Omega)$ and  $\lim_{r\rightarrow0^+} \Phi_V(r)=0$
    \item $V\in M^\prime_p(\Omega)$ if $V\in M_p(\Omega)$ and  $$
    \int_0^\rho\frac{(\Phi_V(r))^\frac{1}{p-1}}{r}\,dr<\infty
    $$ for some $\rho>0$.
\end{enumerate}
\end{defn}

\begin{rem}
We remark that in the case $p=2$, $\mu=\nu=1$ and $Q=I$ we obtain the classical Stummel-Kato class introduced by  Aizenman and Simon in \cite{as} (see also \cite{SDH1} and \cite{SDH2}).
\end{rem}

\begin{rem}
Note that $M^\prime_p(\Omega)\subset M_p(\Omega)\subset\tilde{M}_p(\Omega)$ are vector spaces. If $V\in \tilde{M}_p(\Omega)\setminus\{0\}$ then $\Phi_V$ is defined in $(0,+\infty)$, it is positive, nondecreasing and continuous. If we set $\Phi_V(0)=\lim_{r\rightarrow0^+}\Phi_V(r)$ then the function is defined and continuous also at $0$. If $V\in M_p(\Omega)\setminus\{0\}$ then $\Phi_V(0)=0$ and we set
$$
\Phi^{-1}_V(t)=\min\{r\geq0\,|\,\Phi_V(r)=t\},
$$
for every $t\in[0,b)$, with $b=\sup\Phi_V$. Then $\Phi^{-1}_V$ is increasing, with
\begin{align*}
    &\Phi_V^{-1}(t)\leq r\qquad\forall r\geq0 \textrm{ such that }\Phi_V(r)=t,\\
    &\Phi_V(\Phi_V^{-1}(t))=t\qquad \forall t\in[0,b),\\
    &\Phi_V^{-1}(0)=0=\lim_{t\rightarrow0^+}\Phi_V^{-1}(t),\quad\Phi_V^{-1}(t)>0 \qquad \forall t\in(0,b).
\end{align*}
\end{rem}

\begin{thm}\label{feffermanphongembedding} (Fefferman-Phong Embedding)
Let $1<p<\infty$ and let $V$ belongs to $\tilde{M}_p(\Omega)$. Assume (H1), (H2) and (H4).  Then, there exists a constant $C>0$ (independent of $V$) so that for every metric ball $B \subset 4B\Subset\Omega$ one has
\begin{align}\label{feffermanphong}
\int_{B}\left|V(x)\right|~|u(x)-u_{B}|^p~d\nu &\leq C\Phi_V(2r)\int_{B}\left|\sqrt{Q}\nabla u\right|^p~d\mu
\end{align}
for any $u\in Lip(B)$, where $r=r(B)$.
\end{thm}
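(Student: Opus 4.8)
The plan is to start from the subrepresentation inequality of Theorem~\ref{rf}, which gives, for $d\nu$-a.e.\ $x\in B$,
\[
|u(x)-u_B|\leq C\int_B \frac{\rho(x,y)}{\mu(B(x,\rho(x,y)))}\,|\sqrt{Q}\nabla u(y)|\,d\mu(y)=:C\,(T g)(x),
\]
where $g=|\sqrt{Q}\nabla u|$ and $T$ is the (positive) integral operator with kernel $K(x,y)=\rho(x,y)/\mu(B(x,\rho(x,y)))$ restricted to $B\times B$. Raising to the $p$-th power, the left side of \eqref{feffermanphong} is then bounded by $C\int_B |V(x)|\,(Tg)(x)^p\,d\nu(x)$, so the theorem reduces to the weighted estimate
\[
\int_B |V|\,(Tg)^p\,d\nu \leq C\,\Phi_V(2r)\int_B g^p\,d\mu .
\]

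First I would rewrite this as a boundedness statement for $T\colon L^p(B,d\mu)\to L^p(B,|V|\,d\nu)$ and prove it by a Schur-test / duality argument, since the quantity $\Phi_V$ is precisely designed as an iterated kernel bound. Concretely, one expands $(Tg)^p = (Tg)\cdot(Tg)^{p-1}$, applies H\"older in the $(Tg)^{p-1}$ factor against the measure $|V|\,d\nu$ with exponents $p$ and $p/(p-1)$, or — more cleanly — tests against a nonnegative $\psi\in L^{p'}(B,|V|d\nu)$ and uses Fubini to move the $x$-integration inside:
\[
\int_B (Tg)\,\psi\,|V|\,d\nu=\int_B g(y)\Big(\int_B K(x,y)\,\psi(x)\,|V(x)|\,d\nu(x)\Big)d\mu(y).
\]
The inner integral is exactly the kind of expression controlled, after one more H\"older step in the variable $y$ with the kernel $K(x,y)$ as a weight, by the definition of $\Phi_V(2r)$: note that for $x,y\in B=B(x_0,r)$ one has $\rho(x,y)<2r$, so both $x$ and the points $z$ appearing when one iterates lie in balls of radius $2r$, which is why $\Phi_V(2r)$ (rather than $\Phi_V(r)$) appears and why the hypothesis $4B\Subset\Omega$ is made so that $B(x,2r)\Subset\Omega$ and the doubling machinery of (H2) applies. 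I would carry out the two H\"older applications so that the powers of $K$ distribute as $K=K^{1/p}\cdot K^{1/p'}$ and the $\nu$-integral of $|V|K$ gets raised to the power $1/(p-1)=p'/p$, matching Definition~\ref{def1} exactly; the symmetry issue $B(x,\rho(x,y))$ versus $B(z,\rho(z,y))$ is handled using the doubling property, which gives $\mu(B(x,\rho(x,y)))\approx \mu(B(y,\rho(x,y)))$ up to a constant depending only on $A_\mu$.

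The main obstacle is the bookkeeping in the double H\"older/Fubini step: one must split the single kernel factor $K(x,y)$ appearing in $Tg$ so that part of it pairs with the $y$-integration (producing, after H\"older, the outer $d\mu(y)$ integral with weight $K(x,y)$) and part of it sits inside the iterated $z$-integral defining $\Phi_V$, all while keeping the exponents $p$, $p-1$ consistent so that the final bound is a clean $\Phi_V(2r)\int g^p\,d\mu$ with no stray factors of $\nu(B)$ or $r$. A secondary technical point is justifying the use of doubling to pass between $\mu(B(x,\rho(x,y)))$ and $\mu(B(z,\rho(z,y)))$-type quantities when comparing the kernel in $T$ with the kernel inside $\Phi_V$; this is where (H2), together with the containment $4B\Subset\Omega$ ensuring all relevant balls have closure in $\Omega$, is used. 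Once the weighted operator bound is established, combining it with Theorem~\ref{rf} and absorbing constants finishes the proof, and by the density remark (Remark~\ref{densesobolev}) the inequality extends from $Lip(B)$ to all of $QW^{1,p}(B)$, though the statement as given only claims it for $u\in Lip(B)$.
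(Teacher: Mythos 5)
Your proposal is correct, but it is organized genuinely differently from the paper's proof. The paper never expands the full $p$-th power: it writes $|V(x)|\,|u(x)-u_B|^p\le C|V(x)|\,|u(x)-u_B|^{p-1}\int_B K(x,y)|\sqrt Q\nabla u(y)|\,d\mu(y)$ with $K(x,y)=\rho(x,y)/\mu(B(x,\rho(x,y)))$, applies Fubini, H\"older in $y$ (pulling out $\|\,|\sqrt Q\nabla u|\,\|_{L^p_\mu}$), H\"older again in $x$ against the measure $K\,d\nu$, Fubini once more, identifies $\bigl(\Phi_V(2r)\bigr)^{1/(p-1)}$ via the swallowing $B\subset B(x,2r)$ for $x\in B$, and ends with the left-hand side reappearing on the right to the power $(p-1)/p$, which is then absorbed. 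Your route instead reduces to the weighted bound $\int_B|V|\,(Tg)^p\,d\nu\le C\,\Phi_V(2r)\int_B g^p\,d\mu$ and proves it by a Schur-type test; carried out cleanly, with $W(y)=\int_B|V(z)|K(z,y)\,d\nu(z)$ one splits $K=\bigl(K^{1/p'}W^{1/p}\bigr)\bigl(K^{1/p}W^{-1/p}\bigr)$, so that $(Tg)(x)^p\le\bigl(\int_B K(x,y)W(y)^{1/(p-1)}d\mu(y)\bigr)^{p-1}\int_B K(x,y)W(y)^{-1}g(y)^p\,d\mu(y)$; the first factor is exactly the expression in Definition \ref{def1} and is $\le\Phi_V(2r)$ since $B\subset B(x,2r)$, and after multiplying by $|V(x)|$, integrating $d\nu(x)$ and using Fubini, the factor $\int_B|V(x)|K(x,y)\,d\nu(x)$ cancels exactly against $W(y)$. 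What your version buys is that no absorption step is needed (so no a priori finiteness of $\int_B|V|\,|u-u_B|^p\,d\nu$ is invoked), at the modest cost of the auxiliary weight $W$ (if $W(y)=0$ then $V=0$ $\nu$-a.e.\ in $B$ and the claim is trivial, or replace $W$ by $W+\delta$ and let $\delta\to0$). Two small corrections to your sketch: the ``symmetry issue'' you worry about does not arise, because the inner kernel in Definition \ref{def1} is centered at the integrated variable $z$, which is precisely what Fubini produces, so no doubling comparison of $\mu(B(x,\rho(x,y)))$ with $\mu(B(y,\rho(x,y)))$ is needed — as in the paper, (H2) enters only through the subrepresentation formula of Theorem \ref{rf}; and duality alone does not finish the argument (the adjoint bound is equivalent to the original one), so you should commit to the Schur-weight computation just described rather than leave it as ``test against $\psi$ and use Fubini''.
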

\begin{proof}
Fix a ball $B \subset 4B\Subset\Omega$.  For any $x,y\in \Omega, y\neq x$, set
$$R(x,y) = \frac{\rho(x,y)}{\mu(B(x,\rho(x,y)))}.$$
Using Theorem \ref{rf}, Fubini-Tonelli Theorem and H\"older inequality we get

\begin{align*}
&\int_{B}|V(x)|~|u(x)-u_{B}|^p~d\nu(x)\\
&\le C\int_{B}|V(x)||u(x)-u_{B}|^{p-1}\left(\int_B|\sqrt{Q}\nabla u(y)|R(x,y)~d\mu(y)\right)d\nu(x)\\
&= C\int_B|\sqrt{Q}\nabla u(y)|\left(\int_B|V(x)||u(x)-u_B|^{p-1}R(x,y)\,d\nu(x)\right)\,d\mu(y)\\
&\leq C\left(\int_B|\sqrt{Q}\nabla u(y)|^p\,d\mu(y)\right)^\frac{1}{p}\\
&\quad\quad\times\left[\int_B\left(\int_B|V(x)||u(x)-u_B|^{p-1}R(x,y)\,d\nu(x)\right)^\frac{p}{p-1}\,d\mu(y)\right]^\frac{p-1}{p}\\
&\leq C\left(\int_B|\sqrt{Q}\nabla u(y)|^p\,d\mu(y)\right)^\frac{1}{p}\\
&\quad\quad\times\left[\int_B\left(\int_B|V(x)||u(x)-u_B|^pR(x,y)\,d\nu(x)\right)\right.\\
&\hspace{1.65in}\times\left.\left(\int_B|V(z)|R(z,y)\,d\nu(z)\right)^\frac{1}{p-1}\,d\mu(y)\right]^\frac{p-1}{p}
\end{align*}
Reversing the order of integration again, we find this last line to be identical to
\begin{align*}&C\left(\int_B|\sqrt{Q}\nabla u(y)|^p\,d\mu(y)\right)^\frac{1}{p}\\
&\quad\quad\times\left[\int_B|V(x)||u(x)-u_B|^p\right.\\
&\quad\quad\quad\quad\times\left.\left(\int_B R(x,y)\left(\int_B|V(z)|R(z,y)\,d\nu(z)\right)^\frac{1}{p-1}\,d\mu(y)\right)\,d\nu(x)\right]^\frac{p-1}{p}.
\end{align*}
Before we apply Definition \ref{def1} to the last term above we note that the integration above is over the ball $B$ whose centre is of course not the same as $x$, the centre of $B(x,\rho(x,y))$ in the definition of $R(x,y)$.

However, using the swallowing property of metric balls, given any $x\in\Omega$, $B\subset 2B(x,r)\subset 4B\Subset\Omega$.  Using this idea, Definition \ref{def1} gives

\begin{align*}
&\int_{B}|V(x)|~|u(x)-u_{B}|^p~d\nu(x)\\
&\leq C\left(\int_B|\sqrt{Q}\nabla u(y)|^p\,d\mu(y)\right)^\frac{1}{p} \left[(\Phi_V(2r))^\frac{1}{p-1}\int_B|V(x)||u(x)-u_B|^p\,d\nu(x)\right]^\frac{p-1}{p}.
\end{align*}
Reordering terms then gives \eqref{feffermanphong} concluding the proof.
\end{proof}

\begin{cor}\label{sobnogain}
Let $1<p<\infty$ and let $V$ be any function in $\tilde{M}_p(\Omega)$. Assume (H1), (H2), (H4).  Then, there exists a constant $C>0$ (independent of $V$) such that for every metric ball $B$ with $8B\Subset\Omega$ one has
\begin{align}\label{Spc}
\int_{B}\left|V(x)\right||u(x)|^p~d\nu \leq C\Phi_V(4r)\int_{B}|\sqrt{Q}\nabla u|^p~d\mu
\end{align}
for any $u\in Lip_0(B)$, where $r=r(B)$.
\end{cor}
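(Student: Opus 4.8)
The plan is to bootstrap from the oscillation estimate of Theorem~\ref{feffermanphongembedding} to the compactly supported estimate \eqref{Spc} by passing to the concentric double $2B$, on which $u$ vanishes on a fixed fraction of the $\nu$-mass, so that replacing $u-u_{2B}$ by $u$ costs only a controllable term. First I extend $u$ by zero outside $B$; since $u\in Lip_0(B)$, the segment property (H1) guarantees that this extension is Lipschitz on $\Omega$, and $\sqrt Q\nabla u=0$ $\mu$-a.e.\ outside $B$. Since $8B\Subset\Omega$ we have $2B\subset 4(2B)\Subset\Omega$, so Theorem~\ref{feffermanphongembedding} applied on the ball $2B$ (whose radius is $2r$) gives
$$\int_{B}|V|\,|u-u_{2B}|^p\,d\nu\le\int_{2B}|V|\,|u-u_{2B}|^p\,d\nu\le C\,\Phi_V(4r)\int_{2B}|\sqrt Q\nabla u|^p\,d\mu=C\,\Phi_V(4r)\int_{B}|\sqrt Q\nabla u|^p\,d\mu .$$
Using $|u|^p\le 2^{p-1}\bigl(|u-u_{2B}|^p+|u_{2B}|^p\bigr)$, the whole statement is then reduced to controlling the single term $|u_{2B}|^p\int_B|V|\,d\nu$.

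For that term I would combine two auxiliary estimates. The first bounds the averaged potential $\int_B|V|\,d\nu$ in terms of $\Phi_V$ alone: writing $R(x,y)=\rho(x,y)/\mu(B(x,\rho(x,y)))$, the reverse-doubling of order $1$ together with the doubling of $\mu$ yields $R(x,y)\ge c\,r/\mu(B)$ for all $x,y\in B$, and inserting this lower bound into Definition~\ref{def1} (evaluating the supremum at the centre of $B$, both in the inner and the outer integral) gives $\int_B|V|\,d\nu\le C\,\mu(B)\,r^{-p}\,\Phi_V(r)\le C\,\mu(B)\,r^{-p}\,\Phi_V(4r)$. This is where Lemma~\ref{covering} is convenient: one may alternatively cover $\overline B$ by small metric balls of uniformly bounded overlap, establish the previous bound on each of them at its own (smaller) radius, and sum, monotonicity of $\Phi_V$ again keeping the scale under $\Phi_V(4r)$. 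The second estimate controls the average by the gradient: since $u\equiv 0$ on $2B\setminus B$ and, by Lemma~\ref{2}, $\nu(2B\setminus B)\ge\nu(2B)/(1+A_\nu^{3})$, one gets $|u_{2B}|\le (1+A_\nu^{3})\,\nu(2B)^{-1}\int_{2B}|u-u_{2B}|\,d\nu$; the Poincaré inequality (H4) on $2B$, Hölder's inequality and $\mu(2B)\ge\mu(B)$ then give $|u_{2B}|\le C\,r\,\mu(B)^{-1/p}\bigl(\int_B|\sqrt Q\nabla u|^p\,d\mu\bigr)^{1/p}$.

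Multiplying the two estimates,
$$|u_{2B}|^p\int_B|V|\,d\nu\le C\,r^{p}\mu(B)^{-1}\Bigl(\int_B|\sqrt Q\nabla u|^p\,d\mu\Bigr)\cdot \mu(B)\,r^{-p}\,\Phi_V(4r)=C\,\Phi_V(4r)\int_B|\sqrt Q\nabla u|^p\,d\mu ,$$
and adding this to the first displayed inequality yields \eqref{Spc}. I expect the main obstacle to be precisely the treatment of the average $u_{2B}$: it forces one to work on the dilate $2B$ (where $u$ vanishes on a definite portion, via Lemma~\ref{2}) rather than on $B$ itself, and it requires checking that every dilation that appears — $2B$, its quadruple $8B$, the ball $B(x_0,\tfrac{11}{2}r)$ needed for Lemma~\ref{2}, and the balls produced by Lemma~\ref{covering} — stays compactly inside $\Omega$ under the single hypothesis $8B\Subset\Omega$, and that the surviving scale is exactly $\Phi_V(4r)$, which is why Theorem~\ref{feffermanphongembedding} must be applied at radius $2r$ and no larger dilate of $B$ is taken.
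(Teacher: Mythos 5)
Your argument is correct, and it reaches \eqref{Spc} by a genuinely different treatment of the crucial ``average'' term than the paper's. The paper never decouples $V$ from $u$: it writes $\bigl(1-\nu(B)/\nu(2B)\bigr)u_B=u_B-u_{2B}$ (valid because $\operatorname{supp}u\subset B$), estimates $\int_B|V||u_B-u_{2B}|^p\,d\nu$ by the triangle inequality and \emph{two more applications} of Theorem \ref{feffermanphongembedding} (on $B$ and on $2B$), and then divides by the factor bounded below via Lemma \ref{2}; the only scales that appear are $\Phi_V(2r)$ and $\Phi_V(4r)$, and no hypothesis beyond those already inside Theorem \ref{feffermanphongembedding} is invoked. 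You instead split $|u|^p\lesssim|u-u_{2B}|^p+|u_{2B}|^p$ and estimate the two factors of $|u_{2B}|^p\int_B|V|\,d\nu$ separately: the bound $\int_B|V|\,d\nu\leq C\mu(B)r^{-p}\Phi_V(r)$ follows, as you say, from the kernel lower bound $R(z,y)\geq c\,r/\mu(B)$ for $y,z\in B$, which does hold by combining the reverse-doubling of order $1$ (applied to $B(z,\rho(z,y))\subset B(z,2r)$) with the doubling of $\mu$ (note $B(z,2r)\subset 4B\Subset\Omega$, so both parts of (H2) apply); and the bound $|u_{2B}|\leq Cr\,\mu(B)^{-1/p}\bigl(\int_B|\sqrt Q\nabla u|^p d\mu\bigr)^{1/p}$ follows from Lemma \ref{2}, (H4) on $2B$ and H\"older. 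All dilations you use ($2B$, $8B=4\cdot 2B$, $\tfrac{11}{2}B$) sit inside $\Omega$ under the hypothesis $8B\Subset\Omega$, and all constants are independent of $V$, so the proof closes with the stated scale $\Phi_V(4r)$. What the paper's route buys is economy: it uses only the embedding theorem plus Lemma \ref{2}, with no explicit use of reverse doubling, of (H4), or of H\"older; what yours buys is an explicit, reusable estimate $\int_B|V|\,d\nu\lesssim \mu(B)r^{-p}\Phi_V(r)$, at the price of leaning on the reverse-doubling part of (H2). Two small inaccuracies worth fixing: the zero extension of $u\in Lip_0(B)$ is Lipschitz automatically (compactly supported Lipschitz functions extend by zero), so (H1) plays no role there --- it is needed only through the subrepresentation formula behind Theorem \ref{feffermanphongembedding}; and the aside about Lemma \ref{covering} is superfluous for this corollary (in the paper that lemma is actually exploited in Corollary \ref{piccolograndestummel}).
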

\begin{proof}
Let $u$ be a Lipschitz function compactly supported in  a ball $B$ with radius $r$.
By Theorem \ref{feffermanphongembedding} we have
\begin{equation}\label{11}
\begin{aligned}
&\left(\int_B |V(x)||u(x)|^pd\nu\right)^\frac{1}{p}\\
&\quad\leq \left(\int_B |V(x)||u(x)-u_{B}|^p~d\nu\right)^\frac{1}{p} + \left(\int_B |V(x)||u_{B}|^p~d\nu\right)^\frac{1}{p}\\
&\quad\leq C\left(\Phi_V(2r)\int_B |\sqrt{Q}\nabla u|^p~d\nu\right)^\frac{1}{p} + \left(\int_B |V(x)||u_{B}|^p~d\nu\right)^\frac{1}{p}
\end{aligned}
\end{equation}
Using that $u$ has compact support in $B$,
$$\left(1-\frac{\nu(B)}{\nu(2B)}\right)u_{B} = u_{B} - u_{2B}$$
and hence by Theorem \ref{feffermanphongembedding}
\begin{align*}
&\left(1-\frac{\nu(B)}{\nu(2B)}\right)\left(\int_B |V(x)||u_{B}|^p~d\nu\right)^\frac{1}{p}\\
&\quad= \left(\int_B |V(x)||u_{B}-u_{2B}|^p~d\nu\right)^\frac{1}{p}\\
&\quad\leq \left(\int_{B} |V(x)||u-u_{B}|^p~d\nu\right)^\frac{1}{p}+\left(\int_{2B} |V(x)||u-u_{2B}|^p~d\nu\right)^\frac{1}{p}\\
&\quad \leq C\left(\left(\Phi_V(2r)\right)^\frac{1}{p} +\left(\Phi_V(4r)\right)^\frac{1}{p}\right)\left( \int_{B} |\sqrt{Q}\nabla u|^p~d\mu\right)^\frac{1}{p}
\end{align*}
since $\operatorname{supp}u\subset B$. As $\frac{11}{2}B\subset8B\Subset\Omega$, Lemma \ref{2} provides a constant $c=c(A_\nu)>0$ such that $\left(1-\frac{\nu(B)}{\nu(2B)}\right) \geq c>0$. Dividing by this factor, since $V\in \tilde{M}_p(\Omega)$ and since $\Phi_V(r)$ is obviously nondecreasing, we obtain
\begin{equation}\label{13}
\left(\int_B |V(x)||u_B|^p~d\nu\right)^\frac{1}{p}\leq C \left(\Phi_V(4r) \int_{B} |\sqrt{Q}\nabla u|^p~d\mu,\right)^\frac{1}{p}.
\end{equation}
We get the result by \eqref{13}, \eqref{11} and monotonicity of $\Phi_V$.
\end{proof}

\begin{cor}\label{piccolograndestummel}
Let $1<p<\infty$ and let $V$ be any function in $M_p(\Omega)\setminus\{0\}$. Assume (H1), (H2), (H3), (H4), (H6). Then there exists $C>0$ such that for every small enough $\varepsilon >0$ and every Lipschitz function $u$ with compact support contained in $\Omega$ one has
\begin{align}\label{mexxopiccolopesostrong}
\int_{\Omega}|V|\,|u|^p\,d\nu
&\le \, C\varepsilon  \int_{\Omega}
|\sqrt Q\nabla u|^p \, d\mu+C\omega_V(\varepsilon)\int_{\Omega}|u|^p\,d\nu,
\end{align}
where $\omega_V(\varepsilon)=\varepsilon r_\varepsilon^{-\gamma p}$, $r_\varepsilon>0$ is any number such that $\Phi_V(r_\varepsilon)\leq\varepsilon$ and $\gamma>0$ is as in (H6).
\end{cor}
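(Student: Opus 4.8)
The plan is to localize. On small balls the compactly-supported Fefferman--Phong estimate of Corollary \ref{sobnogain} is available, with the \emph{gain} $\Phi_V(4r)$ (small because $V\in M_p(\Omega)$) in front of the gradient term; the price for passing from $u$ to a compactly supported test function will be paid by the cut-off functions of (H6), whose derivative bound $\sim r^{-\gamma}$ produces the factor $r_\varepsilon^{-\gamma p}$ in $\omega_V(\varepsilon)$. The covering Lemma \ref{covering} then lets one reassemble the global estimate with a loss depending only on structural constants.

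Concretely, set $E:=\operatorname{supp}u$, a compact subset of $\Omega$, and let $r_\varepsilon>0$ be as in the statement, $\Phi_V(r_\varepsilon)\le\varepsilon$; put $r:=r_\varepsilon/4$. Taking $\varepsilon$ (hence $r$) small enough that Lemma \ref{covering} applies to $E$ at radius $\tau r$ with parameter $c^\ast=1/\tau$ and that $8B(z,r)\Subset\Omega$ for every $z\in E$, we obtain finitely many balls $B_j=B(z_j,r)$, $z_j\in E$, with $8B_j\Subset\Omega$, such that $E\subset\bigcup_j\tau B_j$ and $\sum_j\chi_{B_j}\le M_0:=(18/\tau)^Q$. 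For each $j$ let $\varphi_j$ be the first element of the accumulating sequence from (H6) for the ball $B_j$, so $\varphi_j\in Lip_0(B_j)$, $0\le\varphi_j\le1$, $\varphi_j\equiv1$ on $\tau B_j$, and $\left\||\sqrt Q\nabla\varphi_j|\,v^{-1/p}\right\|_\infty\le\tilde C T r^{-\gamma}$.

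Since $\operatorname{supp}u\subset\bigcup_j\tau B_j$ and $\varphi_j\equiv1$ on $\tau B_j$, one has $|u|^p\chi_E\le\sum_j|\varphi_j u|^p\chi_{B_j}$, hence
\[
\int_\Omega|V|\,|u|^p\,d\nu\le\sum_j\int_{B_j}|V|\,|\varphi_j u|^p\,d\nu .
\]
Each $\varphi_j u\in Lip_0(B_j)$, so Corollary \ref{sobnogain} gives $\int_{B_j}|V|\,|\varphi_j u|^p\,d\nu\le C\Phi_V(4r)\int_{B_j}|\sqrt Q\nabla(\varphi_j u)|^p\,d\mu$. Since $\sqrt Q\nabla(\varphi_j u)=\varphi_j\sqrt Q\nabla u+u\sqrt Q\nabla\varphi_j$ a.e., using $0\le\varphi_j\le1$, property (4) of (H6), and the bound $vh\le Cm$ from (H3), we obtain $\int_{B_j}|\sqrt Q\nabla(\varphi_j u)|^p\,d\mu\le C\int_{B_j}|\sqrt Q\nabla u|^p\,d\mu+Cr^{-\gamma p}\int_{B_j}|u|^p\,d\nu$. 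Summing over $j$, using $\Phi_V(4r)=\Phi_V(r_\varepsilon)\le\varepsilon$, monotonicity of $\Phi_V$, and the finite overlap $\sum_j\chi_{B_j}\le M_0$, we arrive at
\[
\int_\Omega|V|\,|u|^p\,d\nu\le CM_0\,\varepsilon\left(\int_\Omega|\sqrt Q\nabla u|^p\,d\mu+r^{-\gamma p}\int_\Omega|u|^p\,d\nu\right),
\]
and since $r^{-\gamma p}=4^{\gamma p}r_\varepsilon^{-\gamma p}$, absorbing $4^{\gamma p}M_0$ into $C$ yields \eqref{mexxopiccolopesostrong} with $\omega_V(\varepsilon)=\varepsilon r_\varepsilon^{-\gamma p}$. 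Note $C$ depends only on the structural data in (H1)--(H4), (H6), on the constant in \eqref{ipotesi_misure}, on $p$ and $\tau$, never on $u$.

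The argument is essentially bookkeeping once Corollary \ref{sobnogain}, (H6) and (H3) are in hand; the point that needs care is the matching of scales. The cut-offs must be identically $1$ on the balls $\tau B_j$ that cover $E$, while the \emph{full} balls $B_j$ (where Corollary \ref{sobnogain} is applied, so $8B_j\Subset\Omega$ is required) must have overlap controlled by a purely structural constant --- this is exactly what forces the choice $c^\ast=1/\tau$ in Lemma \ref{covering}. Likewise $4r$ must be tied to $r_\varepsilon$ so that $\Phi_V(4r)\le\varepsilon$ appears in front of the gradient term while the cut-off cost $r^{-\gamma p}$ becomes precisely $\omega_V(\varepsilon)$ up to a dimensional factor. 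A secondary, routine, point is that (H3) in the equivalent form $vh\le Cm$ is exactly what converts the weight $v\,d\mu$ generated by differentiating the cut-offs back into $d\nu$.
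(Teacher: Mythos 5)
Your proposal is correct and follows essentially the same route as the paper's proof: cover the support using Lemma \ref{covering} at scale tied to $r_\varepsilon$, multiply by the (H6) cut-offs, apply Corollary \ref{sobnogain} on each ball, control the cut-off gradient term via (H6)-(4) together with $vh\le Cm$ from (H3), and sum using the bounded overlap. The only (harmless) cosmetic difference is that you convert $v\,d\mu$ to $d\nu$ ball by ball, whereas the paper does it once at the end.
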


\begin{proof}
Since $V$ is not $0$ a.e. in $\Omega$, $b:=\sup\Phi_V>0$.
Let $\varepsilon\in(0,b)$ and $E$ be any bounded domain such that $\operatorname{supp} u\Subset E\Subset\Omega$. Let $r$ be a positive number to be chosen later. By Lemma \ref{covering} there exists $N=N(r)$ such that
$$
E\subset\bigcup_{j=1}^NB(x_j,\tau r)
$$
where $\tau$ is as in $(H6)$ and  $x_j\in E$ for all $j=1,\ldots,N.$
We also have
$$
\sum_{j=1}^N\chi_{B(x_j,r)}(x)\leq\left(\frac{18}{\tau}\right)^Q=M.
$$
From $(H6)$ there exists $\varphi_j \in Lip_0(B(x_j,r))$ such that $0\leq\varphi_j\leq1$, and  $\varphi_j=1$ on $B(x_j,\tau r)$, $j=1,\ldots,N.$ By Corollary \ref{sobnogain} we have
\begin{equation*}
  \begin{aligned}
  \int_{\Omega}V|u|^p\,d\nu&=\int_{\bigcup_{j=1}^NB(x_j,\tau r)}V|u|^p\,d\nu\\
  &\leq\sum_{j=1}^N\int_{B(x_j,r)}V|u|^p\varphi_j^p\,d\nu\\
  &\leq C\sum_{j=1}^N\Phi_V(4r)\int_{B(x_j,r)}|\sqrt{Q}\nabla(u\varphi_j)|^p\,d\mu\\
  &\leq C\Phi_V(4r)\left(\sum_{j=1}^N\int_{B(x_j,r)}|\sqrt{Q}\nabla u|^p\varphi_j^p\,d\mu\right.\\
  &\hspace{3.5cm}\left.+\sum_{j=1}^N\int_{B(x_j,r)}|\sqrt{Q}\nabla \varphi_j|^p|u|^p\,d\mu\right)
  \end{aligned}
\end{equation*}
for some positive constant $C$. Note that
\begin{equation*}
  \begin{aligned}
  \sum_{j=1}^N\int_{B(x_j,r)}|\sqrt{Q}\nabla u|^p\varphi_j^p\,d\mu&\leq \int_{\Omega}|\sqrt{Q}\nabla u|^p\sum_{j=1}^N\chi_{B(x_j,r)}\,d\mu\\
  &\leq M\int_{\Omega}|\sqrt{Q}\nabla u|^p\,d\mu.
\end{aligned}
\end{equation*}
By condition (H6)-(4),
\begin{equation*}
  \begin{aligned}
\sum_{j=1}^N\int_{B(x_j,r)}|\sqrt{Q}\nabla \varphi_j|^p|u|^p\,d\mu&\leq Cr^{-\gamma p}\sum_{j=1}^N\int_{B(x_j,r)}|u|^pv\,d\mu\\
&\leq  Cr^{-\gamma p}\int_{\Omega}|u|^pv\sum_{j=1}^N\chi_{B(x_j,r)}\,d\mu\\
&\leq  MCr^{-\gamma p}\int_{\Omega}|u|^pv\,d\mu.
\end{aligned}
\end{equation*}
Taking $t=4r$ we get
\begin{equation*}
  \begin{aligned}
 \int_{\Omega}|V||u|^p\,d\nu&\leq MC\Phi_V(t)\int_{\Omega}|\sqrt{Q}\nabla u|^p\,d\mu\\
  &\,\,\,\,\,+MC\Phi_V(t)Ct^{-\gamma p}\int_{\Omega}|u|^pv\,d\mu.
\end{aligned}
\end{equation*}
Let $t_\varepsilon$ be such that $\Phi_V(t_\varepsilon)\leq\varepsilon$. Condition (H3) yields, we get  \eqref{mexxopiccolopesostrong} where $\omega_V(\varepsilon)=\varepsilon t_\varepsilon^{-\gamma p}$.
\end{proof}

\begin{rem}\label{remmmm}
	Note that $r_\varepsilon=\Phi_V^{-1}(\varepsilon)$ in \eqref{mexxopiccolopesostrong} is an admissible choice.
\end{rem}
\begin{rem}\label{remmm}
	By a density argument and Fatou's Lemma, one can easily see that \eqref{mexxopiccolopesostrong} holds also for every function $u\in QW^{1,p}_0(E)$, where $E$ is any bounded domain such that $E\Subset\Omega$.
\end{rem}

\section{Local boundedness and Harnack inequality}\label{sec5}

%From now on we suppose $\rho$ is a metric and $l=1$ in (H5).

In this section we prove local boundedness and continuity of weak solutions to equation \eqref{Qppoisson}, where $|f|^{p-1}$ belongs to the Stummel-Kato class (see Definition \ref{def1}).

%\begin{equation}\label{equation}

%\textrm{Div}\left(h(x)\left|\sqrt{Q(x)}\nabla u(x)\right|^{p-2}Q(x)\nabla u(x)\right) = m(x)|f(x)|^{p-2}f(x) \qquad\textrm{ in $\Omega$}

%\end{equation}

\begin{defn}\label{weaksolutions}

A function $\vec{\bf u}=(u,\vec{g})\in QW^{1,p}(\Omega)$ is a solution of  \eqref{Qppoisson} if

\begin{equation}\label{eqn}
\int_{\Omega} |\sqrt Q~\vec{g}|^{p-2}\nabla\phi\cdot Q~\vec{g}~ h
dx+ \int_{\Omega}|f|^{p-2}f\phi ~mdx=0
\end{equation}
for every $\phi\in Lip_0(\Omega)$.
\end{defn}

We can enlarge the class of test functions in Definition \ref{weaksolutions} to include all pairs $(\phi,\vec{\psi})\in  QW^{1,p}_0(\Omega)$ such that $(\phi,\vec{\psi})\in  QW^{1,p}_0(\Omega_0)$ for some subdomain $\Omega_0 \Subset\Omega$ (such that both $\phi$ and $\vec{\psi}$ vanish outside of $\Omega_0$).

\begin{lem}\label{testexpand}  Let $Q(x)$ be symmetric, non negative definite matrix valued function for which $v(x) = \|Q(x)\|^{p/2}_{op} \in L^1_{loc}(\Omega)$ and assume (H3), (H5), (H6). Assume moreover that $f\in L^{p-\frac{p(k-1)}{pk-1}}_{\text{loc}}(\Omega,d\nu)$, where $k$ is as in (H5).  Let $(u,\vec{g})$ be a weak solution of \eqref{Qppoisson}. Then
\begin{align*}
\int_{\Omega} |\sqrt Q~\vec{g}|^{p-2}\vec{\psi}\cdot Q~\vec{g}~ h
dx+ \int_{\Omega}|f|^{p-2}f\varphi ~mdx=0
\end{align*}
for any compact subdomain $\Omega_0\Subset\Omega$ and any $(\varphi,\vec{\psi})\in QW^{1,p}_0(\Omega_0)$.
\end{lem}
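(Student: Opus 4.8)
\emph{Plan of proof.} The argument is a density argument. By definition of $QW^{1,p}_0(\Omega_0)$ there is a sequence $\varphi_j\in Lip_0(\Omega_0)$ with $\varphi_j\to\varphi$ in $L^p(\Omega_0,d\nu)$ and $\sqrt{Q}\nabla\varphi_j\to\sqrt{Q}\vec{\psi}$ in $(L^p(\Omega_0,d\mu))^n$; extending each $\varphi_j$ by zero we may regard it as an element of $Lip_0(\Omega)$ (cf. Remark \ref{densesobolev}), so that \eqref{eqn} applies with $\phi=\varphi_j$:
\begin{equation*}
\int_{\Omega} |\sqrt Q~\vec{g}|^{p-2}\nabla\varphi_j\cdot Q~\vec{g}~h\,dx+ \int_{\Omega}|f|^{p-2}f\varphi_j~m\,dx=0 .
\end{equation*}
The plan is to pass to the limit $j\to\infty$ in each term, using $\mu(\Omega_0),\nu(\Omega_0)<\infty$ and the conventions $\nabla\varphi_j\cdot Q\vec{g}=(\sqrt Q\nabla\varphi_j)\cdot(\sqrt Q\vec{g})$, $\vec{\psi}\cdot Q\vec{g}=(\sqrt Q\vec{\psi})\cdot(\sqrt Q\vec{g})$.

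For the principal term, I bound the difference from the natural limit by H\"older's inequality with exponents $p$ and $p/(p-1)$:
\begin{equation*}
\left|\int_{\Omega_0}|\sqrt Q\vec{g}|^{p-2}(\sqrt Q\nabla\varphi_j-\sqrt Q\vec{\psi})\cdot\sqrt Q\vec{g}\; h\,dx\right|\le\Big(\int_{\Omega_0}|\sqrt Q\vec{g}|^p d\mu\Big)^{\frac{p-1}{p}}\Big(\int_{\Omega_0}|\sqrt Q\nabla\varphi_j-\sqrt Q\vec{\psi}|^p d\mu\Big)^{\frac1p}.
\end{equation*}
The first factor is finite since $(u,\vec{g})\in QW^{1,p}(\Omega)$ and the second tends to $0$ by construction; the same estimate shows the limiting integrand $|\sqrt Q\vec{g}|^{p-2}\vec{\psi}\cdot Q\vec{g}\,h$ is integrable on $\Omega_0$. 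Hence the principal term converges to $\int_\Omega|\sqrt Q\vec{g}|^{p-2}\vec{\psi}\cdot Q\vec{g}\,h\,dx$.

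For the data term, the key observation is that the exponent imposed on $f$ is exactly the one making $|f|^{p-1}$ the H\"older dual of $L^{pk}(\Omega_0,d\nu)$: one checks $p-\frac{p(k-1)}{pk-1}=\frac{pk(p-1)}{pk-1}=(p-1)\,(pk)'$, where $(pk)'=\frac{pk}{pk-1}$, so $f\in L^{p-\frac{p(k-1)}{pk-1}}_{\text{loc}}(\Omega,d\nu)$ yields $|f|^{p-1}\in L^{(pk)'}_{\text{loc}}(\Omega,d\nu)$. On the other hand, applying Proposition \ref{corHighInt} (which uses (H3), (H5), (H6)) to the differences $\varphi_j-\varphi_\ell\in QW^{1,p}_0(\Omega_0)\subset QW^{1,p}(\Omega)$ shows $(\varphi_j)$ is Cauchy, hence convergent, in $L^{pk}(\Omega_0,d\nu)$; passing to an a.e.-convergent subsequence identifies the limit with $\varphi$, so $\varphi_j\to\varphi$ in $L^{pk}(\Omega_0,d\nu)$. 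Then by H\"older with exponents $pk$ and $(pk)'$,
\begin{equation*}
\int_{\Omega_0}|f|^{p-1}|\varphi_j-\varphi|\,d\nu\le\big\||f|^{p-1}\big\|_{L^{(pk)'}(\Omega_0,d\nu)}\,\|\varphi_j-\varphi\|_{L^{pk}(\Omega_0,d\nu)}\longrightarrow 0,
\end{equation*}
so the data term converges to $\int_\Omega|f|^{p-2}f\varphi\,m\,dx$. Combining the two limits in the displayed identity yields the conclusion.

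The only genuinely delicate step is this last upgrade of the approximation: recognizing that Proposition \ref{corHighInt} promotes the a priori $L^p(d\nu)$-convergence of $\varphi_j$ to $L^{pk}(d\nu)$-convergence, and that the precise integrability hypothesis on $f$ is calibrated so that $|f|^{p-1}$ pairs with exactly this gain. Everything else is routine H\"older estimation together with the finiteness $\int_\Omega|\sqrt Q\vec{g}|^p\,d\mu<\infty$.
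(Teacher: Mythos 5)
Your proof is correct and takes essentially the same route as the paper: the paper bounds the map $\varphi\mapsto\int_\Omega|\sqrt Q\,\vec g|^{p-2}\nabla\varphi\cdot Q\vec g\,h\,dx+\int_\Omega|f|^{p-2}f\varphi\,m\,dx$ in the $QW^{1,p}(\Omega_0)$-norm, using the same H\"older estimate on the principal term and Proposition \ref{corHighInt} together with the same dual-exponent computation $p-\frac{p(k-1)}{pk-1}=(p-1)(pk)'$ for the data term, and then extends the identity by density. Your explicit passage to the limit along the approximating Lipschitz sequence (with the Cauchy-in-$L^{pk}(d\nu)$ upgrade via Proposition \ref{corHighInt}) is just a hands-on rendering of that same continuity-plus-density argument.
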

\begin{proof}
For every $\varphi\in Lip_0(\Omega_0)$ \eqref{eqn} holds. By H\"{o}lder inequality
$$
\left|\int_{\Omega} |\sqrt Q~\vec{g}|^{p-2}\nabla\varphi\cdot Q~\vec{g}~ h
dx\right|\leq  \|u\|_{QW^{1,p}(\Omega)}^{p-1}\|\varphi\|_{QW^{1,p}(\Omega_0)}
$$
Proposition \ref{corHighInt} yields
\begin{align*}
\left|\int_{\Omega}|f|^{p-2}f\varphi ~mdx\right|&\leq \|f\|_{L^{p-\frac{p(k-1)}{pk-1}}(\Omega_0;d\nu)}^{p-1}\|\varphi\|_{L^{pk}(\Omega_0,d\nu)}\\
&\leq C\|f\|_{L^{p-\frac{p(k-1)}{pk-1}}(\Omega_0;d\nu)}^{p-1}\|\varphi\|_{QW^{1,p}(\Omega_0)}.
\end{align*}
Then the map
$$
(\varphi,\nabla\varphi)\mapsto\int_{\Omega} |\sqrt Q~\vec{g}|^{p-2}\nabla\varphi\cdot Q~\vec{g}~ h
dx+ \int_{\Omega}|f|^{p-2}f\varphi ~mdx
$$
is bounded in  $QW^{1,p}(\Omega_0)$-norm, and can thus be extended by continuity on $QW^{1,p}_0(\Omega_0)$. By density formula \eqref{eqn} holds for every $(\varphi,\vec{\psi})\in QW^{1,p}_0(\Omega_0)$.
\end{proof}

%\begin{lem}\label{productrule} Let $E\Subset\Omega$.  Under the same hypotheses as Lemma \ref{testexpand}, given any $(u,\vec{g})\in QW^{1,p}(E)$, $\varphi\in Lip_0(E)$, and $\theta\geq 1$, one finds
%$$(\varphi^\theta u, \theta \varphi^{\theta-1}u\nabla \varphi+ \varphi^\theta \vec{g})\in QW^{1,p}_0(E).$$
%\end{lem}
%begin{proof}  Since %$(u,\vec{g})\in QW^{1,p}(E)$, there is a sequence $\{u_j\}\subset Lip_{loc}(E)$ such that
%\begin{itemize}
%\item $u_j \rightarrow u$ both $\nu$-a.e. and in $L^p_\nu(E)$, and
%\item $\sqrt{Q}\nabla u_j\rightarrow \sqrt{Q}\nabla u$ both $\mu$-a.e. and in $L^p_\mu(E)$.
%\end{itemize}
%\end{proof}

We also state here a Lemma, that we will need in the proof of our main Theorems (see \cite{RZ} and \cite{DZ}).

\begin{lem}\label{lemmaserie}
Let $\Phi:(0,\infty)\rightarrow(0,\infty)$ be a continuous nondecreasing function such that $\lim_{r\rightarrow0}\Phi(r)=0$, let $C>0$ be such that $C^{-1}\in \operatorname{Im}\Phi$ and let $k,p>1$. Then the series $$\sum_{j=0}^\infty\frac{1}{k^j}\log\Phi^{-1}\left(\frac{1}{Ck^{(p-1)j}}\right)$$
converges if and only if
$$\int_0^R \frac{(\Phi(r))^\frac{1}{p-1}}{r}\,dr$$
converges for some $R>0$. In this case, if we choose $R_0>0$ such that $\Phi(R_0)=C^{-1}$, we have
\begin{align*}
&\frac{1}{C^\frac{1}{p-1}}\left(-\frac{k-1}{k}\sum_{j=0}^\infty\frac{1}{k^j}\log\Phi^{-1}\left(\frac{1}{Ck^{(p-1)j}}\right)+\log R_0\right)\\
&\quad\leq \int_0^{R_0} \frac{(\Phi(r))^\frac{1}{p-1}}{r}\,dr\\
&\quad\quad\leq\frac{1}{C^\frac{1}{p-1}}\left(-(k-1)\sum_{j=0}^\infty\frac{1}{k^j}\log\Phi^{-1}\left(\frac{1}{Ck^{(p-1)j}}\right)+k\log R_0\right).
\end{align*}
\end{lem}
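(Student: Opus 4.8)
The plan is to relate the given series to the integral by a discrete-to-continuous comparison on the dyadic-in-$k$ partition of the radial variable. Set $a_j=\Phi^{-1}\!\left(\tfrac{1}{Ck^{(p-1)j}}\right)$ for $j\geq 0$; since $\Phi$ is continuous, nondecreasing with $\lim_{r\to0}\Phi(r)=0$ and $C^{-1}\in\operatorname{Im}\Phi$, each $a_j$ is a well-defined positive number, the sequence $(a_j)$ is nonincreasing, $a_j\to 0$, and $a_0=R_0$ when $\Phi(R_0)=C^{-1}$. On the interval $r\in[a_{j+1},a_j]$ one has $\Phi(a_{j+1})\leq\Phi(r)\leq\Phi(a_j)$, hence
\begin{equation*}
\left(\frac{1}{Ck^{(p-1)(j+1)}}\right)^{\frac{1}{p-1}}\log\frac{a_j}{a_{j+1}}\ \leq\ \int_{a_{j+1}}^{a_j}\frac{(\Phi(r))^{\frac{1}{p-1}}}{r}\,dr\ \leq\ \left(\frac{1}{Ck^{(p-1)j}}\right)^{\frac{1}{p-1}}\log\frac{a_j}{a_{j+1}},
\end{equation*}
that is, $\dfrac{1}{(Ck^{p-1})^{\frac{1}{p-1}}k^{j}}\log\dfrac{a_j}{a_{j+1}}\leq \displaystyle\int_{a_{j+1}}^{a_j}\dfrac{(\Phi(r))^{\frac{1}{p-1}}}{r}\,dr\leq \dfrac{1}{C^{\frac{1}{p-1}}k^{j}}\log\dfrac{a_j}{a_{j+1}}$.

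Next I would sum over $j\geq 0$. The left-hand sides and right-hand sides telescope only after accounting for the $k^{-j}$ weights, so I use Abel summation (summation by parts): writing $\log\frac{a_j}{a_{j+1}}=\log a_j-\log a_{j+1}$ and $b_j=\log a_j=\log\Phi^{-1}\!\left(\tfrac{1}{Ck^{(p-1)j}}\right)$, one computes
\begin{equation*}
\sum_{j=0}^{\infty}\frac{1}{k^{j}}(b_j-b_{j+1})=\frac{k-1}{k}\sum_{j=0}^{\infty}\frac{b_j}{k^{j}}+\frac{b_0}{k}\cdot\frac{1}{1-1/k}-\lim_{j\to\infty}\frac{b_{j+1}}{k^{j}},
\end{equation*}
and since $b_{j}\to-\infty$ only logarithmically in the sense that $\frac{1}{k^{j}}\log\Phi^{-1}(\cdots)\to 0$ precisely when the series converges, the boundary term at infinity vanishes in the convergent case. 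Carrying the bookkeeping through (using $\sum_{j\ge0}k^{-j}=\frac{k}{k-1}$ and $\log a_0=\log R_0$) converts the two-sided estimate into
\begin{equation*}
\frac{k-1}{k}\,\frac{1}{C^{\frac{1}{p-1}}}\sum_{j=0}^{\infty}\frac{-b_j}{k^{j}}+\frac{\log R_0}{C^{\frac{1}{p-1}}}\ \ (\text{up to the }k^{p-1}\text{ factor on one side})\ \lesssim\ \int_0^{R_0}\frac{(\Phi(r))^{\frac{1}{p-1}}}{r}\,dr\ \lesssim\ (k-1)\,\frac{1}{C^{\frac{1}{p-1}}}\sum_{j=0}^{\infty}\frac{-b_j}{k^{j}}+\frac{k\log R_0}{C^{\frac{1}{p-1}}},
\end{equation*}
which is the claimed pair of inequalities; the equivalence of convergence of the series and of the integral is then immediate from finiteness of either side, and the term $\int_{a_0}^{R_0}=0$ handles the top of the range. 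Note that $\sum_j k^{-j}b_j$ converging is the same as $\sum_j k^{-j}(-b_j)$ converging because $\sum_j k^{-j}$ itself converges, so signs cause no trouble.

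The main obstacle is the careful handling of the Abel-summation boundary terms and the constant $k^{p-1}$ discrepancy: on the lower bound the factor $(Ck^{p-1})^{-1/(p-1)}=C^{-1/(p-1)}k^{-1}$ appears, which is exactly what produces the extra $\frac{k-1}{k}$ (rather than $k-1$) in front of the series in the stated lower estimate, while on the upper bound the shift in the index of $\Phi$ is in the favorable direction and one only loses an overall $k$ from $\sum k^{-j}=\frac{k}{k-1}$. One must also justify that in the divergent case the partial sums of $\sum_j k^{-j}(-b_j)$ and the truncated integrals $\int_{a_{M}}^{R_0}$ diverge together, which follows from the same per-interval inequalities applied to finite sums together with $a_M\to 0$; and one must observe that $\Phi^{-1}$ as defined (the minimum of the level set) satisfies $\Phi(\Phi^{-1}(t))=t$ by continuity, so the endpoint identities $\Phi(a_j)=\tfrac{1}{Ck^{(p-1)j}}$ used above are exact. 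Monotonicity of $\Phi$ guarantees $a_{j+1}\leq a_j$ so the intervals $[a_{j+1},a_j]$ are genuine and their union is $(0,R_0]$ up to a null set, closing the argument.
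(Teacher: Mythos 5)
Your overall strategy is the natural (and surely the intended) one: partition $(0,R_0]$ at the points $a_j=\Phi^{-1}\bigl(C^{-1}k^{-(p-1)j}\bigr)$, trap $\Phi$ on each $[a_{j+1},a_j]$ between its endpoint values (legitimate because $\Phi(\Phi^{-1}(t))=t$ for the generalized inverse), and resum; the paper itself gives no proof, deferring to the cited references, so this is the right route and your per-interval inequality is exact. The genuine problem is that the summation-by-parts identity you display is false, and since the entire content of the lemma is the precise constants $\tfrac{k-1}{k}$, $k-1$, $\log R_0$, $k\log R_0$, the step you leave as ``carrying the bookkeeping through'' with $\lesssim$ and ``up to the $k^{p-1}$ factor'' is exactly the step that must be done correctly. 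Test your identity with $b_j\equiv 1$: the left side is $0$, your right side is $\tfrac{k}{k-1}$. The correct partial-sum identity is
\begin{equation*}
\sum_{j=0}^{N}\frac{b_j-b_{j+1}}{k^{j}}\;=\;k\,b_0-(k-1)\sum_{j=0}^{N}\frac{b_j}{k^{j}}-\frac{b_{N+1}}{k^{N}},\qquad b_j=\log a_j,
\end{equation*}
and with it the bookkeeping does close: in the convergent case the boundary term vanishes (as you observe, terms of a convergent series tend to $0$), the lower per-interval bounds give $\int_0^{a_0}\geq C^{-1/(p-1)}\bigl(b_0-\tfrac{k-1}{k}\sum_j b_jk^{-j}\bigr)$, and the upper ones give $\int_0^{a_0}\leq C^{-1/(p-1)}\bigl(k\,b_0-(k-1)\sum_j b_jk^{-j}\bigr)$; the stated bounds then follow using $b_0=\log a_0\leq\log R_0$. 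In the converse direction, boundedness of the truncated integral bounds the finite sums $\sum_{j\leq N}(-b_j)k^{-j}$ through this same identity (using that $b_{N+1}\leq 0$ eventually), and these partial sums are eventually monotone, so the equivalence of convergence holds as you sketch — but only once the identity is repaired.

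A second, smaller gap: you assert $a_0=R_0$ and that $\int_{a_0}^{R_0}=0$ ``handles the top of the range''. With the paper's definition $\Phi^{-1}(t)=\min\{r\geq0\,:\,\Phi(r)=t\}$ one only has $a_0\leq R_0$, with strict inequality whenever $\Phi$ is flat at level $C^{-1}$ and $R_0$ is not the minimal preimage; in that case $\Phi\equiv C^{-1}$ on $[a_0,R_0]$ and $\int_{a_0}^{R_0}\Phi(r)^{1/(p-1)}r^{-1}\,dr=C^{-1/(p-1)}\log(R_0/a_0)>0$. This extra piece is harmless for the upper bound (it is absorbed since $(k-1)b_0+\log R_0\leq k\log R_0$), but it must be kept for the lower bound: discarding it leaves $\log a_0$ where the statement requires $\log R_0$, which is a strictly weaker estimate. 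Keep the $[a_0,R_0]$ contribution explicitly (its $b_0$ cancels against the $b_0$ from the dyadic part) and the claimed lower bound comes out exactly.
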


\begin{thm}[Local boundedness]\label{localboundedness}
Let $1<p<\infty$.  Assume (H1)-(H6). Suppose that $u\in QW^{1,p}(\Omega)$ is a solution of \eqref{Qppoisson} where $|f|^{p-1}\in M_p'(\Omega)$ and $f\in L^{p-\frac{p(k-1)}{pk-1}}_{\text{loc}}(\Omega,d\nu)$, where $k$ as in (H5). Then, there exists a constant $C>0$ such that for every ball $B\equiv B_r$ such that $8B\Subset\Omega$ we have
\begin{multline}\label{locale_limitatezza}
\|u\|_{L^\infty(B_{\tau r})}\leq \\
\leq C e^{\frac{\gamma k}{k-1}M_1}\!\! \left(r^{p(1-\gamma)}\frac{\nu(B_{r})}{\mu(B_{r})}\right)^{\frac{k}{p(k-1)}}\!\!\left(\left(\frac{1}{\nu(B_r)}\int_{B_r}|u|^p\,d\nu\right)^\frac{1}{p}\!\!\!+(\Phi_{|f|^{p-1}}(r))^{\frac{1}{p-1}}\right),
\end{multline}
where $\gamma$ is as in (H6) and $M_1>0$ is any constant satisfying
$$
\int_0^{r}\frac{\left(\Phi_{|f|^{p-1}}(s)\right)^{\frac{1}{p-1}}}{s}\,ds\leq M_1 \left(\Phi_{|f|^{p-1}}(r)\right)^{\frac{1}{p-1}}.
$$
\end{thm}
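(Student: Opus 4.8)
The plan is to run a Moser iteration adapted to the weighted, matrix--degenerate setting, climbing the ladder of exponents with the accumulating cut--offs of (H6) and, crucially, controlling the forcing term at \emph{every} stage by Corollary \ref{piccolograndestummel}, with the convergence of the resulting geometric series governed by the $M_p'$ hypothesis through Lemma \ref{lemmaserie}. First I would reduce to bounding $u^+$: since $-u$ solves the same equation with $f$ replaced by $-f$ and $|-f|^{p-1}=|f|^{p-1}\in M_p'(\Omega)$, the bound for $u^-$ follows by applying the result to $-u$ and taking the maximum. Fix $B=B_r$ with $8B\Subset\Omega$, write $V=|f|^{p-1}$, choose $k_0\simeq\Phi_V(r)^{1/(p-1)}$ with the normalisation fixed so that a fixed multiple of $k_0^{p-1}$ equals $\Phi_V(r)$ (hence lies in $\operatorname{Im}\Phi_V$), and set $w=u^+ + k_0\ge k_0>0$; by Propositions \ref{appendix1}--\ref{appendix2}, $w\in QW^{1,p}(\Omega)$ with $\sqrt Q\nabla w=\chi_{\{u>0\}}\sqrt Q\nabla u$. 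With the cut--offs $\varphi_j$ of (H6) ($\varphi_j\equiv1$ on $\tau B$, $\operatorname{supp}\varphi_{j+1}\subseteq\{\varphi_j=1\}$, $\||\sqrt Q\nabla\varphi_j|\,v^{-1/p}\|_\infty\le\tilde C T^j r^{-\gamma}$), exponents $\ell_j=k^j$ and $\alpha_j=p\ell_j-p+1\ge1$, I will test the equation, through Lemma \ref{testexpand}, with $\phi_j=\varphi_j^{\,p}(w^{\alpha_j}-k_0^{\alpha_j})$; since $w$ is not a priori bounded, the superlinear power must be truncated at height $M$, the final inequalities being recovered by letting $M\to\infty$ via Fatou. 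Note $\phi_j\ge0$ and $\nabla\phi_j$ is supported in $\{u>0\}$, which kills the sign ambiguity in $\nabla\bar u\cdot Q\nabla u$.

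Inserting $\phi_j$ into \eqref{eqn}, using $\nabla\varphi_j\cdot Q\nabla w\le|\sqrt Q\nabla\varphi_j|\,|\sqrt Q\nabla w|$, Young's inequality to absorb the cut--off term into the coercive term $\alpha_j\int h\varphi_j^p w^{\alpha_j-1}|\sqrt Q\nabla w|^p\,dx$, the identity $w^{\alpha_j-1}|\sqrt Q\nabla w|^p=\ell_j^{-p}|\sqrt Q\nabla(w^{\ell_j})|^p$, and the bound $|\int|f|^{p-2}f\phi_j\,d\nu|\le\int V\varphi_j^p w^{\alpha_j}\,d\nu\le k_0^{1-p}\int V(\varphi_j\eta_j)^p\,d\nu$ with $\eta_j:=w^{\ell_j}$ (using $w\ge k_0$), I would obtain
\[
\int_B h\,\varphi_j^{\,p}|\sqrt Q\nabla\eta_j|^p\,dx\le C\int_B h\,\eta_j^{\,p}|\sqrt Q\nabla\varphi_j|^p\,dx+\frac{C\ell_j^{\,p-1}}{k_0^{\,p-1}}\int_B V(\varphi_j\eta_j)^p\,d\nu.
\]
Applying Corollary \ref{piccolograndestummel} (in the $QW^{1,p}_0$ form of Remark \ref{remmm}) to $\varphi_j\eta_j$ with $\varepsilon=\varepsilon_j$ so small that the gradient term it produces is absorbed by the left side forces $\varepsilon_j\simeq k_0^{p-1}\ell_j^{1-p}$, and then $\tfrac{\ell_j^{p-1}}{k_0^{p-1}}\omega_V(\varepsilon_j)=\tfrac{\ell_j^{p-1}}{k_0^{p-1}}\varepsilon_j\,\Phi_V^{-1}(\varepsilon_j)^{-\gamma p}\simeq\Phi_V^{-1}(\varepsilon_j)^{-\gamma p}$ by Remark \ref{remmmm}; together with (H6)(4) and (H3) (i.e. $v\,d\mu\le C\,d\nu$, which turns the cut--off term into $\le CT^{jp}r^{-\gamma p}\int_{B^{(j)}}\eta_j^p\,d\nu$, where $B^{(j)}=\operatorname{supp}\varphi_j$) this gives
\[
\int_B h\,|\sqrt Q\nabla(\varphi_j\eta_j)|^p\,dx\le C\Big(\tfrac{T^{jp}}{r^{\gamma p}}+\Phi_V^{-1}(\varepsilon_j)^{-\gamma p}\Big)\int_{B^{(j)}}\eta_j^{\,p}\,d\nu.
\]
Feeding this into the gain Sobolev inequality of (H5) (in the $QW^{1,p}_0$ form of Remark \ref{densesobolev}) applied to $\varphi_j\eta_j$, using $\varphi_j\equiv1$ on $B^{(j+1)}$ and the doubling of $\nu$ to normalise all masses to $\nu(B)$, and writing $\Psi_j=\big(\nu(B)^{-1}\int_{B^{(j)}}w^{\,pk^j}\,d\nu\big)^{1/(pk^j)}$, I get the recursion $\Psi_{j+1}\le a_j\Psi_j$ with $a_j=\big[\,Cr\,(\nu(B)/\mu(B))^{1/p}\big(T^{jp}r^{-\gamma p}+\Phi_V^{-1}(\varepsilon_j)^{-\gamma p}\big)^{1/p}\big]^{k^{-j}}$.

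Since $\liminf_j\Psi_j\ge\|w\|_{L^\infty(\tau B)}$ ($L^q(\tau B,\nu)$ norms tending to the $L^\infty$ norm, $\tau B\subseteq B^{(j)}$ giving the lower bound) and $\Psi_0\le\big(\nu(B)^{-1}\int_B|u|^p d\nu\big)^{1/p}+k_0$, it remains to bound $\prod_j a_j$; convergence of this product is precisely where $V\in M_p'(\Omega)$ enters. Here the powers of $r$ must be reconciled: since $\varepsilon_j\le\Phi_V(r)$ gives $\Phi_V^{-1}(\varepsilon_j)\le r$, one has $T^{jp}r^{-\gamma p}+\Phi_V^{-1}(\varepsilon_j)^{-\gamma p}\le 2T^{jp}\Phi_V^{-1}(\varepsilon_j)^{-\gamma p}$, so $\log\prod_j a_j$ splits into $\tfrac{k}{k-1}\log\!\big(Cr(\nu(B)/\mu(B))^{1/p}\big)$, a convergent multiple of $\log T$ (from $\sum_j jk^{-j}$, finite as $k>1$), and $-\gamma\sum_j k^{-j}\log\Phi_V^{-1}(\varepsilon_j)$. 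Writing $\varepsilon_j=\Phi_V(r)k^{-(p-1)j}$ and applying Lemma \ref{lemmaserie} with $R_0=r$ together with the definition of $M_1$, the last series is $\le\tfrac{\gamma k}{k-1}(M_1-\log r)$; the surplus $r^{-\gamma k/(k-1)}$ from $-\log r$ merges with $r^{k/(k-1)}$ from the geometric factor and $r^{-\gamma k/(k-1)}$ from the cut--off factor to give exactly $r^{(1-\gamma)k/(k-1)}=(r^{p(1-\gamma)})^{k/(p(k-1))}$, so $\prod_j a_j\le Ce^{\frac{\gamma k}{k-1}M_1}\big(r^{p(1-\gamma)}\nu(B_r)/\mu(B_r)\big)^{k/(p(k-1))}$. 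Since $\|u^+\|_{L^\infty(\tau B)}\le\|w\|_{L^\infty(\tau B)}\le(\prod_j a_j)\Psi_0$ and $k_0\simeq\Phi_V(r)^{1/(p-1)}$, this yields \eqref{locale_limitatezza} for $u^+$; applying it to $-u$ finishes the proof.

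The hard part is choosing the two free parameters compatibly: the additive constant $k_0$ must be an acceptable error (it is $\simeq\Phi_V(r)^{1/(p-1)}$, harmless, and responsible for the second summand in \eqref{locale_limitatezza}) yet large enough for $w^{\alpha_j}\le w^{p\ell_j}/k_0^{p-1}$ to be efficient, while $\varepsilon_j$ must decay like $k_0^{p-1}\ell_j^{1-p}$ to absorb the growing Caccioppoli constant $\ell_j^{p-1}$ and \emph{still} leave a convergent series $\sum_j k^{-j}\log\Phi_V^{-1}(\varepsilon_j)$ — it is exactly this requirement, resolved via Lemma \ref{lemmaserie}, that pins down the hypothesis $|f|^{p-1}\in M_p'(\Omega)$, and the matching of the $r$--powers above is the one genuinely nontrivial bookkeeping step. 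A secondary technicality, unavoidable since $w$ is not a priori bounded, is the truncation of the superlinear powers $w^{\alpha_j}$ needed to make $\phi_j$ admissible in Lemma \ref{testexpand} (whence also the integrability hypothesis on $f$); it is removed at the end by monotone convergence.
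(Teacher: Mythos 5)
Your proposal is correct and follows essentially the same route as the paper's proof: a Moser iteration with an additive constant proportional to $\Phi_{|f|^{p-1}}(r)^{1/(p-1)}$, admissible (truncated) power test functions, Corollary \ref{piccolograndestummel} applied at every step with $\varepsilon_j\sim k^{-(p-1)j}$, the (H5)--(H6) recursion, and Lemma \ref{lemmaserie} together with the $M_p'$ hypothesis and $M_1$ to sum the resulting series; working with $u^+$ plus a height-$M$ truncation instead of $|u|+\lambda$ and Serrin's function $F$ is only a cosmetic variant. Two harmless remarks: in your final bookkeeping sentence the factor $r^{-\gamma k/(k-1)}$ ``from the cut-off'' is counted again although it was already absorbed when you bounded $T^{jp}r^{-\gamma p}+\Phi_V^{-1}(\varepsilon_j)^{-\gamma p}\leq 2T^{jp}\Phi_V^{-1}(\varepsilon_j)^{-\gamma p}$ (the displayed bound on $\prod_j a_j$ is nevertheless correct), and the degenerate case $\Phi_{|f|^{p-1}}(r)=0$ (where $k_0=0$ makes your normalization ill-posed) should be treated separately by the same, simpler iteration without the forcing term, as the paper does.
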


\begin{proof} We set $\bar{u}=|u| + \lambda$, with $\lambda>0$ arbitrary. Fix $q\ge 1$, $l>\lambda$ and let
\begin{equation*}
F(\bar{u})=
\begin{cases}
\bar{u}^q \qquad \qquad & \quad \text{if} \quad \lambda\le \bar{u}\le l \cr
ql^{q-1}(\bar{u} -l) + l^q & \quad \text{if} \quad \bar{u}\ge l\,.
\end{cases}
\end{equation*}
Set
\begin{equation*}
G(u)=\hbox{\rm sign}(u)\left(F(\bar{u})[F'(\bar{u})]^{p-1}-q^{p-1}\lambda^\beta
\right)
\quad
u\in\left(-\infty,+\infty\right)\,,
\end{equation*}
where $\beta$ satisfies  $pq=p+\beta-1$.

Let $B$ be a ball satisfying $8B\Subset\Omega$; for the rest of the proof, $r=r(B)$ will be fixed. Let $\phi=\varphi^pG(u)$,  where $\varphi$ is a Lipschitz function such that $0\le  \varphi \le 1$, $ \varphi\equiv 1$ in $B_{\tau r}$, compactly supported in $B_{r}$, see (H6). Arguing as in Subsection 2.2 in \cite{MRW1}, taking into account the presence of the measures $d\mu$ and $d\nu$, it is easy to see that there exists a sequence of $l$'s monotonically diverging at $+\infty$ such that the corresponding function $\phi\in QW^{1,p}_0(B_r)$ is a feasible test function in \eqref{eqn}. We will always and only be working with $l$'s in that sequence, but in order to keep the notation simple we will not specify the sequence when using $\phi$. Then using Young's inequality, and noting that $|G(u)|\leq F(\bar{u})(F'(\bar{u}))^{p-1}$, we obtain the following for any $\varepsilon\in (0,1)$
\begin{align}
\int_{B_{r}}& \varphi^pG^\prime(u)|\sqrt Q\nabla u|^p\, hdx \\
\nonumber&=\int_{B_{r}}  \varphi^pG^\prime  (u)|\sqrt Q\nabla u|^{p-2}\nabla u\cdot Q\nabla u\, hdx\\
\nonumber&=-p\int_{B_{r}}  \varphi^{p-1}G(u) |\sqrt Q\nabla u|^{p-2}\nabla  \varphi\cdot Q\nabla u\, hdx\\
\nonumber&\quad\quad\quad\quad\quad\quad\quad\;\;\,\,\,\,\,\,- \int_{B_{r}}|f|^{p-2}f \varphi^pG(u)\,mdx\\
\nonumber&\le p\int_{B_{r}}  \varphi^{p-1}|G(u)| |\sqrt Q\nabla u|^{p-1}|\sqrt Q\nabla  \varphi|\, hdx\\
\nonumber&\quad\quad\quad\quad\quad\quad\quad\;\;+\int_{B_{r}}|f|^{p-1} \varphi^p|G(u)|\,mdx\\
\nonumber&\le \varepsilon \int_{B_{r}} \varphi^p (F^\prime(\bar{u}))^p|\sqrt Q\nabla u|^p\, hdx +C(\varepsilon) \int_{B_{r}} \left(F(\bar{u})\right)^p  |\sqrt{Q}\nabla  \varphi|^p\,hdx\\
\nonumber&\quad\quad\quad\quad\quad\quad\quad\quad\quad\quad\quad\quad\quad\quad\;\;\;\;\;\,\,\,\,\,\,+\int_{B_{r}}|f|^{p-1} \varphi^p|G(u)|\,mdx
\end{align}
Then, set $\mathcal U=F(\bar{u})$ and $f_1=\left(\frac{|f|}{\lambda}\right)^{p-1}$, we get $|f|^{p-1}\le f_1\bar{u}^{p-1}$.

We have $f_1\in M'_p(\Omega)$ where
$$
\Phi_{f_1}(t)=\frac{1}{\lambda^{p-1}}\Phi_{|f|^{p-1}}(t)
\qquad
\forall t>0.
$$
By Propositions \ref{appendix1} and \ref{appendix2} we have $\mathcal{U}\in QW^{1,p}(\Omega)$. Note that, since $$F'(\bar{u})^p\le G'(u)\le p (F'(\bar{u}))^p,$$
we can absorb the first term in the last line into the first term in the first line. Moreover we also have
$$
|G(u)|\leq F(\bar{u})(F'(\bar{u}))^{p-1},\qquad\bar{u}F'(\bar{u})\leq qF(\bar{u}),
$$
and then we obtain
\begin{align}\label{14}
\int_{B_{r}} &\varphi^p|\sqrt Q\nabla \mathcal U|^p hdx\\
\nonumber&\le C \int_{B_{r}} |\mathcal U|^p  |\sqrt Q\nabla  \varphi|^p\,hdx+C\int_{B_{r}}  \varphi^pf_1|\bar{u}|^{p-1}(F'(\bar{u}))^{p-1}\mathcal{U}\,mdx \\
\nonumber&\le C \int_{B_{r}} |\mathcal U|^p  |\sqrt Q\nabla  \varphi|^p\,hdx+Cq^{p-1}\int_{B_{r}}  \varphi^pf_1 |\mathcal U|^p\,mdx.
\end{align}
By Corollary \ref{piccolograndestummel} on $\varphi \, \mathcal{U}$ (with the choice $r_\varepsilon=(\Phi_{f_1}^{-1}(\varepsilon))$), see also Remarks \ref{remmmm} and \ref{remmm}, and we obtain
\begin{align*}
\int_{B_{r}} \varphi^p&|\sqrt Q\nabla \mathcal U|^p hdx\\
&\le C \int_{B_{r}} |\mathcal U|^p  |\sqrt Q\nabla  \varphi|^phdx+Cq^{p-1}\varepsilon \int_{B_{r}} |\sqrt Q \nabla ( \varphi \mathcal U)| ^ph\,dx\\
&\qquad\qquad\qquad\qquad\qquad\qquad+ Cq^{p-1}\omega_{f_1}(\varepsilon)  \int_{B_{r}} | \varphi \mathcal U|^p m\,dx\\
&\le C(1+q^{p-1}\varepsilon)\!\!\! \int_{B_{r}} |\mathcal U|^p  |\sqrt Q\nabla  \varphi|^phdx+\hat{C}q^{p-1}\varepsilon\!\!\! \int_{B_{r}}  \varphi^p|\sqrt Q \nabla  \mathcal U|^phdx\\
&\qquad\qquad\qquad\qquad\qquad\qquad+ Cq^{p-1}\omega_{f_1}(\varepsilon) \int_{B_{r}} | \varphi \mathcal U|^p m\,dx.
\end{align*}
for every $0<\varepsilon<\sup\Phi_{f_1}$. Since we are assuming $f$ is not $0$ a.e. in $\Omega$, we choose $\lambda=(2\hat{C}\Phi_{|f|^{p-1}}(r))^{\frac{1}{p-1}}>0$. In particular $\Phi_{f_1}(r)=\frac{1}{2\hat{C}}$ and $\sup\Phi_{f_1}>\frac{1}{2\hat{C}}$.
We can choose $\varepsilon=\frac{1}{2\hat{C}q^{p-1}}$ and then
\begin{align}\label{51}
&\int_{B_{r}} \varphi^p|\sqrt Q\nabla \mathcal U|^p hdx\\
\nonumber&\quad\le C \int_{B_{r}} |\mathcal U|^p  |\sqrt Q\nabla  \varphi|^phdx
+Cq^{p-1}\omega_{f_1}\left(\frac{1}{2\hat{C}q^{p-1}}\right)\int_{B_{r}} | \varphi \mathcal U|^p m\,dx.
\end{align}
Since $d\nu=mdx$, $d\mu=hdx$,  $vh\leq Cm$, by Remark \ref{densesobolev} and \eqref{51} we have
\begin{align*}
&\left( \int_{B_{r}}| \varphi \mathcal U|^{kp}d\nu\right)^{\frac{1}{k}}\\
&\le C r^p \frac{(\nu(B_{r}))^\frac{1}{k}}{\mu(B_{r})}\int_{B_{r}}|\sqrt Q \nabla( \varphi\mathcal U)|^pd\mu \\
&\le C r^p\frac{(\nu(B_{r}))^\frac{1}{k}}{\mu(B_{r})}\int_{B_{r}}|\mathcal U|^p  |\sqrt Q\nabla  \varphi|^pd\mu\\
&\qquad\qquad+Cq^{p-1}\omega_{f_1}\left(\frac{1}{2\hat{C}q^{p-1}}\right)r^p\frac{(\nu(B_{r}))^\frac{1}{k}}{\mu(B_{r})}\int_{B_{r}} | \varphi \mathcal U|^p d\nu.
%\le c \frac{(\nu(B_{2r}))^{1/k}}{\mu(B_{2r})}\frac{1}{(r_2-r_1)^{lp}}\int_{2B}|\mathcal U|^pd\nu +K\frac{(\nu(2B))^{1/k}}{\mu(2B)}\int_{2B}| \varphi\mathcal U|^pd\nu
\end{align*}
We specialize our choice of test function $\varphi$, using the sequence of Lipschitz cutoff functions $\{\varphi_j\}$ provided by (H6), relative to the ball $B$. We denote by $S_j:=\operatorname{supp}\varphi_j$, $S_0:=B$.
\begin{align*}
&\left( \int_{S_{j+1}}| \mathcal U|^{kp}d\nu\right)^{\frac{1}{k}}\le\left( \int_{S_j}| \varphi_j \mathcal U|^{kp}d\nu\right)^{\frac{1}{k}}\\
&\le C r^{p(1-\gamma)}\frac{(\nu(B_{r}))^\frac{1}{k}}{\mu(B_{r})}T^{pj}\int_{S_j}|\mathcal U|^p\,vd\mu \\
&\qquad\qquad+Cq^{p-1}\omega_{f_1}\left(\frac{1}{2\hat{C}q^{p-1}}\right) r^p\frac{(\nu(B_{r}))^\frac{1}{k}}{\mu(B_{r})}\int_{S_j}| \mathcal U|^pd\nu \\
&\le Cr^p\frac{(\nu(B_{r}))^\frac{1}{k}}{\mu(B_{r})}\left(r^{-p\gamma}T^{pj}+q^{p-1}\omega_{f_1}\left(\frac{1}{2\hat{C}q^{p-1}}\right)\right) \int_{S_j}| \mathcal U|^pd\nu
\end{align*}
Letting $l\to \infty$ along the sequence of $l$'s
we are using, by monotone convergence we have
\begin{align}\label{nnnnnn}
&\left( \int_{S_{j+1}}|\bar{u}|^{kqp}d\nu\right)^{\frac{1}{kqp}} \\
\nonumber&\le C^{\frac{1}{pq}}r^{\frac{1-\gamma}{q}}\frac{(\nu(B_{r}))^{\frac{1}{kqp}}}{(\mu(B_{r}))^{\frac{1}{qp}}}\left(T^{pj}+r^{p\gamma}q^{p-1}\omega_{f_1}\left(\frac{1}{2\hat{C}q^{p-1}}\right)\right)^{\frac{1}{qp}}\left( \int_{S_j}|\bar{u}|^{qp}d\nu\right)^{\frac{1}{qp}}.
\end{align}
Since $\Phi_{f_1}(r)=\frac{1}{2\hat{C}}$ and $\Phi_{f_1}^{-1}$ is monotone we have
$$r^{-1}\Phi_{f_1}^{-1}\left(\frac{1}{2\hat{C}q^{p-1}}\right)\leq r^{-1}\Phi_{f_1}^{-1}\left(\frac{1}{2\hat{C}}\right)\leq 1$$
and
$$r^{p\gamma}q^{p-1}\omega_{f_1}\left(\frac{1}{2\hat{C}q^{p-1}}\right)=
r^{p\gamma}\frac{1}{2\hat{C}}\left(\Phi_{f_1}^{-1}\left(\frac{1}{2\hat{C}q^{p-1}}\right)\right)^{-\gamma p}\geq\frac{1}{2\hat{C}}.$$
Thus we have
\begin{align*}
&\|\bar{u}\|_{L^{kpq}_\nu(S_{j+1})}\\
&\le C^{\frac{1}{pq}}r^{\frac{1}{q}}\frac{(\nu(B_{r}))^{\frac{1}{kqp}}}{(\mu(B_{r}))^{\frac{1}{pq}}}T^{\frac{j}{q}}
\left(\Phi^{-1}_{f_1}\left(\frac{1}{2\hat{C}q^{p-1}}\right)\right)^{-\frac{\gamma}{q}}\|\bar{u}\|_{L^{pq}_\nu(S_j)}.
\end{align*}

We choose $q=q_j=k^j$, set $\alpha_j=pk^j$ and we obtain
$$\|\bar{u}\|_{L^{\alpha_{j+1}}_\nu(S_{j+1})}\le (Cr^{p})^{\frac{1}{\alpha_j}}\frac{(\nu(B_{r}))^{\frac{1}{k\alpha_{j}}}}{(\mu(B_{r}))^{\frac{1}{\alpha_j}}} T^{\frac{pj}{\alpha_j}}\left(\Phi^{-1}_{f_1}\left(\frac{p^{p-1}}{2\hat{C}\alpha_j^{p-1}}\right)\right)^{-\frac{p\gamma}{\alpha_j}} \|\bar{u}\|_{L^{\alpha_j}_\nu(S_j)}.
$$
We iterate this inequality, sending $j$ to $\infty$, and we find
\begin{align}
\label{nnnn}&\|\bar{u}\|_{L^\infty(B_{\tau r})}\\
\nonumber&\le Cr^{\frac{k}{k-1}}\frac{(\nu(B_{r}))^{\frac{1}{p(k-1)}}}{(\mu(B_{r}))^{\frac{k}{p(k-1)}}}\prod_{j=0}^{\infty}\left(\Phi^{-1}_{f_1}\left(\frac{p^{p-1}}{2\hat{C}\alpha_j^{p-1}}\right)\right)^{-\frac{p\gamma}{\alpha_j}}\|\bar{u}\|_{L^p_\nu(B_{r})}\\
\nonumber&=C\left(\exp\left(\frac{1}{(2\hat{C})^\frac{1}{p-1}}\sum_{j=0}^{\infty}\frac{-p\gamma}{\alpha_j}\log\left(\Phi^{-1}_{f_1}\left(\frac{p^{p-1}}{2\hat{C}\alpha_j^{p-1}}\right)\right)\right)\right)^{(2\hat{C})^\frac{1}{p-1}} \times \\
\nonumber&\quad\quad \times \left(r^{p}\frac{\nu(B_{r})}{\mu(B_{r})}\right)^{\frac{k}{p(k-1)}}\left(\frac{1}{\nu(B_r)}\int_{B_r}\bar{u}^p\,d\nu\right)^\frac{1}{p}.
\end{align}
Now we note that
\begin{align*}
    &\frac{1}{(2\hat{C})^\frac{1}{p-1}}\sum_{j=0}^{\infty}\frac{-p\gamma}{\alpha_j}\log\left(\Phi^{-1}_{f_1}\left(\frac{p^{p-1}}{2\hat{C}\alpha_j^{p-1}}\right)\right)\\
    &\quad=-\gamma\frac{1}{(2\hat{C})^\frac{1}{p-1}}\sum_{j=0}^{\infty}\frac{1}{k^j}\log\left(\Phi^{-1}_{f_1}\left(\frac{1}{2\hat{C}k^{(p-1)j}}\right)\right)
\end{align*}
and hence the series converges by Lemma \ref{lemmaserie} since $\Phi_{f_1}(r)=\frac{1}{2\hat{C}}$ and thus
\begin{align*}
&-\gamma\frac{1}{(2\hat{C})^\frac{1}{p-1}}\sum_{j=0}^{\infty}\frac{1}{k^j}\log\left(\Phi^{-1}_{f_1}\left(\frac{1}{2\hat{C}k^{(p-1)j}}\right)\right)\\
&\qquad\leq \frac{\gamma k}{k-1}\left(\int_0^{r}\frac{\left(\Phi_{f_1}(s)\right)^{\frac{1}{p-1}}}{s}\,ds-\frac{1}{(2\hat{C})^\frac{1}{p-1}}\log r\right)\\
&\qquad=\frac{\gamma k}{k-1}\frac{1}{(2\hat{C})^\frac{1}{p-1}}\left(\frac{1}{\left(\Phi_{|f|^{p-1}}(r)\right)^{\frac{1}{p-1}}}\int_0^{r}\frac{\left(\Phi_{|f|^{p-1}}(s)\right)^{\frac{1}{p-1}}}{s}\,ds-\log r\right)<\infty
\end{align*}
by our definition of $f_1$. Now if
$$
\frac{1}{\left(\Phi_{|f|^{p-1}}(r)\right)^{\frac{1}{p-1}}}\int_0^{r}\frac{\left(\Phi_{|f|^{p-1}}(s)\right)^{\frac{1}{p-1}}}{s}\,ds\leq M_1
$$
from \eqref{nnnn} we have
\begin{equation}\label{2323}
\|\bar{u}\|_{L^\infty(B_{\tau r})}\leq C e^{\frac{\gamma k}{k-1}M_1} \left(r^{p(1-\gamma)}\frac{\nu(B_{r})}{\mu(B_{r})}\right)^{\frac{k}{p(k-1)}}\left(\frac{1}{\nu(B_r)}\int_{B_r}\bar{u}^p\,d\nu\right)^\frac{1}{p}.
\end{equation}
This concludes the proof if $\lambda=(\Phi_{|f|^{p-1}}(r))^{\frac{1}{p-1}}>0$, i.e. if $f$ does not vanish almost everywhere. If $(\Phi_{|f|^{p-1}}(r))^{\frac{1}{p-1}}=0$ then $f_1=0$ a.e. and \eqref{14} yields
\begin{align*}
\int_{B_{r}} \varphi^p|\sqrt Q\nabla \mathcal U|^p hdx\le C \int_{B_{r}} |\mathcal U|^p  |\sqrt Q\nabla  \varphi|^p\,hdx.
\end{align*}
Proceeding as in the previous case, for any $\lambda>0$ one gets

\begin{align*}
\left( \int_{S_{j+1}}|\bar{u}|^{kqp}d\nu\right)^{\frac{1}{kqp}}\le C^{\frac{1}{pq}}r^{\frac{1-\gamma}{q}}\frac{(\nu(B_{r}))^{\frac{1}{kqp}}}{(\mu(B_{r}))^{\frac{1}{qp}}}T^{\frac{j}{q}}\left( \int_{S_j}|\bar{u}|^{qp}d\nu\right)^{\frac{1}{qp}},
\end{align*}
formally as in \eqref{nnnnnn} with $\omega_{f_1}=0$. Choosing again $q=q_j=k^j$ and setting $\alpha_j=pk^j$ we obtain
$$\|\bar{u}\|_{L^{\alpha_{j+1}}_\nu(S_{j+1})}\le \left(Cr^{p(1-\gamma)}\right)^{\frac{1}{\alpha_j}}\frac{(\nu(B_{r}))^{\frac{1}{k\alpha_{j}}}}{(\mu(B_{r}))^{\frac{1}{\alpha_j}}} T^{\frac{pj}{\alpha_j}} \|\bar{u}\|_{L^{\alpha_j}_\nu(S_j)}.
$$
Iterating the above inequality and letting $j$ to infinity we deduce that
\begin{align}
\label{n}&\|\bar{u}\|_{L^\infty(B_{\tau r})}\\
\nonumber&\le Cr^{\frac{k(1-\gamma)}{k-1}}\frac{(\nu(B_{r}))^{\frac{1}{p(k-1)}}}{(\mu(B_{r}))^{\frac{k}{p(k-1)}}}\|\bar{u}\|_{L^p_\nu(B_{r})}\\
\nonumber&=C\left(r^{p(1-\gamma)}\frac{\nu(B_{r})}{\mu(B_{r})}\right)^{\frac{k}{p(k-1)}}\left(\frac{1}{\nu(B_r)}\int_{B_r}\bar{u}^p\,d\nu\right)^\frac{1}{p}
\end{align}
and sending $\lambda$ to $0$ we conclude.
\end{proof}

\begin{rem}
	If $\nu\leq C\mu$ for some constant $C>0$ and $\gamma=1$, then constant in \eqref{locale_limitatezza} does not depend on the metric balls but it depends on $r$ and $f$ only through $M_1$.
\end{rem}

\begin{cor}
Let $1<p<\infty$.  Assume conditions (H1)-(H6). Suppose that $u\in QW^{1,p}(\Omega)$ is a solution of \eqref{Qppoisson} where $|f|^{p-1}\in M_p'(\Omega)$ and $f\in L^{p-\frac{p(k-1)}{pk-1}}_{\text{loc}}(\Omega,d\nu)$, with $k$ as in (H5). Let $E,E_0$ be bounded open domains such that $E\Subset E_0\Subset\Omega$ and let
$0<r<\frac{1}{8}\rho(E,\partial E_0)$ be such that $\Phi_{|f|^{p-1}}(r)<\sup \Phi_{|f|^{p-1}}$, if $f$ does not vanish almost everywhere. Then there exists $C>0$ depending on $E,E_0,r$ such that
\begin{equation*}%\label{locale_limitatezza}
\|u\|_{L^{\infty}(E)}
\le
Ce^{\frac{\gamma k}{k-1}M_1}\left\{\left(\int_{E_0}|u|^p\,d\nu\right)^
{\frac{1}{p}}+(\Phi_{|f|^{p-1}}(r))^{\frac{1}{p-1}}\right\},
\end{equation*}
where $M_1>0$ is any constant such that
$$
\int_0^{r}\frac{\left(\Phi_{|f|^{p-1}}(s)\right)^{\frac{1}{p-1}}}{s}\,ds\leq M_1 \left(\Phi_{|f|^{p-1}}(r)\right)^{\frac{1}{p-1}}.
$$
 \end{cor}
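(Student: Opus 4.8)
The plan is to derive the corollary from the single-ball estimate \eqref{locale_limitatezza} of Theorem \ref{localboundedness} by a finite covering argument, absorbing every geometric quantity into a constant depending on $E$, $E_0$ and the fixed radius $r$ (note that all the balls used will have this common radius, so the constant $C$ and the quantity $M_1$ produced by Theorem \ref{localboundedness} are literally the same for all of them).

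First I would fix $r$ as in the hypothesis. Since $E$ is compact and metric balls are open, from the open cover $E\subseteq\bigcup_{x\in E}B(x,\tau r)$ (with $\tau\in(0,1)$ the constant of (H6)) I extract a finite subcover $E\subseteq\bigcup_{j=1}^N B(z_j,\tau r)$ with centres $z_j\in E$ (one could equally use Lemma \ref{covering}). The condition $r<\tfrac{1}{8}\rho(E,\partial E_0)$ guarantees that $\overline{B(z_j,8r)}\subset E_0\Subset\Omega$ for every $j$: by the segment property (H1) a $\rho$-geodesic issuing from $z_j\in E\Subset E_0$ of length at most $8r<\rho(E,\partial E_0)$ cannot reach $\partial E_0$, so it stays in the open set $E_0$ (alternatively, invoke the first lemma of Section \ref{sec2} on a slightly larger intermediate domain). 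Hence $8B(z_j,r)\Subset\Omega$, and together with the standing assumptions (H1)--(H6) and the hypotheses on $u$ and $f$ — in particular $\Phi_{|f|^{p-1}}(r)<\sup\Phi_{|f|^{p-1}}$ when $f\not\equiv0$, which is exactly what the proof of Theorem \ref{localboundedness} needs in order to fix $\lambda$ — Theorem \ref{localboundedness} applies on each ball $B(z_j,r)$, giving
\begin{multline*}
\|u\|_{L^\infty(B(z_j,\tau r))}\le C\, e^{\frac{\gamma k}{k-1}M_1}\left(r^{p(1-\gamma)}\frac{\nu(B(z_j,r))}{\mu(B(z_j,r))}\right)^{\frac{k}{p(k-1)}}\\
\times\left(\left(\frac{1}{\nu(B(z_j,r))}\int_{B(z_j,r)}|u|^p\,d\nu\right)^{1/p}+\left(\Phi_{|f|^{p-1}}(r)\right)^{1/(p-1)}\right).
\end{multline*}

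Next I would make the prefactors uniform in $j$. Each $\overline{B(z_j,r)}$ is a compact subset of $\Omega$, so $\mu(B(z_j,r))=\int_{B(z_j,r)}h\,dx<\infty$ and likewise $\nu(B(z_j,r))<\infty$; on the other hand $\mu$ and $\nu$ are doubling on balls with double compactly contained in $\Omega$, hence charge every nonempty open set with positive mass (cf. the remark following (H6)), so $\mu(B(z_j,r)),\nu(B(z_j,r))>0$. Since $r$ is fixed and there are only finitely many indices, there are constants $0<c_1\le c_2<\infty$, depending only on $E,E_0,r$, with $c_1\le\mu(B(z_j,r)),\nu(B(z_j,r))\le c_2$ for all $j$. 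Using also $B(z_j,r)\subset E_0$, the displayed estimate simplifies to
\[
\|u\|_{L^\infty(B(z_j,\tau r))}\le C(E,E_0,r)\,e^{\frac{\gamma k}{k-1}M_1}\left(\Big(\int_{E_0}|u|^p\,d\nu\Big)^{1/p}+\big(\Phi_{|f|^{p-1}}(r)\big)^{1/(p-1)}\right)
\]
for every $j$, and since $E\subseteq\bigcup_{j=1}^N B(z_j,\tau r)$, taking the maximum over $j=1,\dots,N$ yields the asserted bound on $\|u\|_{L^\infty(E)}$.

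The argument is essentially bookkeeping once Theorem \ref{localboundedness} is in hand; there is no analytic difficulty beyond what is already contained in that theorem. The only two points deserving a moment's care are the deduction of $8B(z_j,r)\Subset\Omega$ from the hypothesis $r<\tfrac{1}{8}\rho(E,\partial E_0)$ — which rests on the length-metric structure provided by (H1) — and the two-sided control of $\mu(B(z_j,r))$ and $\nu(B(z_j,r))$, which rests on doubling measures assigning positive (and, by local finiteness, finite) mass to the finitely many balls involved. Neither of these is a genuine obstacle.
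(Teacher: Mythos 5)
Your argument is correct and is essentially the paper's own proof: cover $E$ by finitely many balls $B(z_j,\tau r)$ with centres in $E$ and $B(z_j,r)\subset E_0$, apply Theorem \ref{localboundedness} on each, and absorb the (finitely many, $r$-dependent) prefactors into a constant depending on $E,E_0,r$. The extra details you supply (the verification that $8B(z_j,r)\Subset\Omega$ via (H1), and the two-sided control of $\mu(B(z_j,r))$, $\nu(B(z_j,r))$) are exactly the bookkeeping the paper leaves implicit.
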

\begin{proof}
Covering $E$ with a finite number of balls $B_{\tau r}(x_1)$, $\ldots, B_{\tau r}(x_N)$ for suitably small $r>0$, with centers in $E$ and such that the closures of $B_{r}(x_1),\ldots, B_{r}(x_N)$ are contained in $E_0$, the result follows by  applying Theorem \ref{localboundedness} on each $B_{\tau r}(x_j)$.
\end{proof}
\begin{thm}\label{lastbutone}
Let $1<p<\infty$, (H1)-(H6) hold true and $u\in QW^{1,p}(\Omega)$ be a non negative solution of \eqref{Qppoisson}, where $|f|^{p-1}\in M_p'(\Omega)$, $f\in L^{p-\frac{p(k-1)}{pk-1}}_{\text{loc}}(\Omega,d\nu)$ and $k$ is as in (H5).

Assume there exist $r^*,C>0$ such that
\begin{equation}\label{3456}
\sup r^{p(1-\gamma)}\frac{\nu(B_r)}{\mu(B_{r})}\leq C
\end{equation}
where the supremum is computed over all balls such that ${8}B\Subset\Omega$ and $r\leq r^*$, with $\gamma$ as in (H6).

Then, there exist constants $C_2,C_3>0$ such that for every ball $B=B_r$ with $\frac{8}{\tau}B\Subset\Omega$, $0<r\leq r^*$ such that $\Phi_{|f|^{p-1}}\left(\frac{r}{\tau}\right)<\sup\Phi_{|f|^{p-1}}$ if $f$ does not vanish a.e. we have
\begin{align*}%\label{Harnack}
\sup_{B_{\tau r}} u\leq C_2 e^{C_3M_1} \left(\inf_{B_{\tau r}}u+(\Phi_{|f|^{p-1}}(r))^{\frac{1}{p-1}}\right)
\end{align*}
where $\tau$ is as in (H6) and $M_1>0$ is any constant satisfying
\begin{equation}\label{12356}
\int_0^{r}\frac{\left(\Phi_{|f|^{p-1}}(s)\right)^{\frac{1}{p-1}}}{s}\,ds\leq M_1 \left(\Phi_{|f|^{p-1}}(r)\right)^{\frac{1}{p-1}}.
\end{equation}
\end{thm}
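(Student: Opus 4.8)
The plan is to run Moser's iteration in the usual three pieces --- a supremum bound, a weak Harnack (infimum) bound, and a logarithmic ``crossover'' bound --- and then chain them at a common exponent. Set $\bar u=u+\lambda$ with $\lambda=(\Phi_{|f|^{p-1}}(r))^{1/(p-1)}$ (if $f=0$ a.e.\ take $\lambda>0$ arbitrary and let $\lambda\to0^+$ at the end); since $\nabla\bar u=\nabla u$, $\bar u\ge\lambda>0$ is again a solution of \eqref{Qppoisson}. Hypothesis \eqref{3456} is exactly what makes the geometric prefactor $\big(r^{p(1-\gamma)}\nu(B_r)/\mu(B_r)\big)^{k/(p(k-1))}$ in \eqref{locale_limitatezza} a constant, uniform over the admissible balls; hence estimate \eqref{2323} in the proof of Theorem \ref{localboundedness}, combined with \eqref{3456} and the same $\lambda$, reads
\[
\sup_{B_{\tau r}}\bar u\ \le\ C\,e^{C_3M_1}\Big(\tfrac{1}{\nu(B_r)}\!\int_{B_r}\bar u^{\,p}\,d\nu\Big)^{1/p},
\]
and a routine interpolation (writing $\big(\tfrac1{\nu(B_\rho)}\int_{B_\rho}\bar u^{p}\big)^{1/p}\le(\sup_{B_\rho}\bar u)^{1-t_0/p}\big(\tfrac1{\nu(B_\rho)}\int_{B_\rho}\bar u^{t_0}\big)^{1/p}$, Young's inequality, and the standard hole-filling iteration over concentric balls between $B_{\tau r}$ and $B_r$) lowers the exponent to any fixed $t_0\in(0,p)$:
\[
\sup_{B_{\tau r}}\bar u\ \le\ C\,e^{C_3M_1}\Big(\tfrac{1}{\nu(B_r)}\!\int_{B_r}\bar u^{\,t_0}\,d\nu\Big)^{1/t_0}.
\]

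\emph{Weak Harnack.} Next I would prove, for any fixed $t_0>0$,
\[
\Big(\tfrac{1}{\nu(B_r)}\!\int_{B_r}\bar u^{-t_0}\,d\nu\Big)^{-1/t_0}\ \le\ C\,e^{C_3M_1}\inf_{B_{\tau r}}\bar u ,
\]
by the same scheme as Theorem \ref{localboundedness} but with test functions $\varphi^pG(u)$ built from $F(\bar u)=\bar u^{q}$ with $q<0$ (i.e.\ the exponent $\beta=pq-p+1<1-p$) in place of the $q\ge1$ used there; since $\bar u$ is bounded away from $0$, $\bar u^{q}\in QW^{1,p}_{loc}(\Omega)$ and the product rule of Proposition \ref{prop1.6} applies. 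The Caccioppoli inequality together with the Sobolev inequality (H5) then yields reverse--H\"older inequalities $\big(\tfrac1{\nu(B')}\int_{B'}\bar u^{-kt}\big)^{1/(kt)}\le C_t\big(\tfrac1{\nu(B'')}\int_{B''}\bar u^{-t}\big)^{1/t}$ for $B'\subset B''$ and $t\ge t_0$, where the data integral $\int\varphi^p|f|^{p-1}|G(u)|\,d\nu$ is controlled, after using $|G(u)|\lesssim\bar u^{\beta}=\bar u^{1-p}\,\bar u^{\,\beta+p-1}\le\lambda^{1-p}\bar u^{\,\beta+p-1}$ and the definition of $\lambda$, by Corollary \ref{piccolograndestummel} applied to $\varphi\,\bar u^{(\beta+p-1)/p}$ and then absorbed. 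Iterating $t\mapsto kt\mapsto k^2t\mapsto\cdots\to\infty$ and using $\big(\tfrac1{\nu(B_r)}\int_{B_r}\bar u^{-t}\big)^{-1/t}\to\operatorname*{ess\,inf}_{B_r}\bar u$, the accumulated product of the constants $C_t$ and of the zero-order ($\omega$-type) terms is summed by Lemma \ref{lemmaserie}, which converges precisely because $|f|^{p-1}\in M_p'(\Omega)$ and produces the factor $e^{C_3M_1}$, exactly as in the computation leading to \eqref{nnnn}.

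\emph{Crossover.} The remaining link is, for the same $t_0>0$,
\[
\Big(\tfrac1{\nu(B_r)}\!\int_{B_r}\bar u^{\,t_0}\,d\nu\Big)^{1/t_0}\ \le\ C\Big(\tfrac1{\nu(B_r)}\!\int_{B_r}\bar u^{-t_0}\,d\nu\Big)^{-1/t_0},
\]
which I would deduce from $\log\bar u\in BMO_\nu$ on a fixed inner region of $B_r$, with norm uniform in $r$ and in the ball, followed by the John--Nirenberg inequality on the space of homogeneous type $(\Omega,\rho,\nu)$ (legitimate since $\nu$ is doubling for balls whose doubles are compactly contained in $\Omega$, cf.\ Lemmas \ref{geolem} and \ref{covering}); taking $t_0$ equal to the exponent furnished by John--Nirenberg, the bound $\tfrac1{\nu(B')}\int_{B'}e^{\,t_0|\log\bar u-(\log\bar u)_{B'}|}\,d\nu\le C$ gives the display. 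The uniform $BMO$ bound comes from the logarithmic Caccioppoli inequality --- test function $\varphi^p\bar u^{1-p}$, Young's inequality, the cutoff bound $|\sqrt Q\nabla\varphi|\le\tilde CT v^{1/p}\sigma^{-\gamma}$ from (H6) together with (H3), and Corollary \ref{sobnogain} for the data term (where $\Phi_{|f|^{p-1}}(4\sigma)/\Phi_{|f|^{p-1}}(r)\le1$ when $4\sigma\le r$, by monotonicity of $\Phi_{|f|^{p-1}}$) --- which yields $\int_{B_{\tau\sigma}}|\sqrt Q\nabla\log\bar u|^p\,d\mu\le C\sigma^{-\gamma p}\nu(B_\sigma)$ for all the relevant sub-balls $B_\sigma$; applying then the $L^1$ Poincar\'e inequality (H4), H\"older's inequality (in $\mu$), doubling, and \eqref{3456} bounds $\tfrac1{\nu(B_{\tau\sigma})}\int_{B_{\tau\sigma}}|\log\bar u-(\log\bar u)_{B_{\tau\sigma}}|\,d\nu$ by a constant independent of $\sigma$ and of the ball. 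It is precisely here that the room $\tfrac8\tau B\Subset\Omega$ and $\Phi_{|f|^{p-1}}(r/\tau)<\sup\Phi_{|f|^{p-1}}$ enters: it guarantees that this logarithmic Caccioppoli inequality and Corollaries \ref{sobnogain}--\ref{piccolograndestummel} can be run on every sub-ball $B_{\sigma/\tau}\subset B_{r/\tau}$ needed in the John--Nirenberg argument.

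\emph{Conclusion and main obstacle.} Chaining the three displayed inequalities at the common exponent $t_0$ gives $\sup_{B_{\tau r}}\bar u\le C_2 e^{C_3M_1}\inf_{B_{\tau r}}\bar u$; substituting $\bar u=u+(\Phi_{|f|^{p-1}}(r))^{1/(p-1)}$ (and letting $\lambda\to0^+$ if $f=0$ a.e.) yields the statement. I expect the crossover to be the main obstacle: one must identify and, if necessary, record the correct \emph{local} John--Nirenberg inequality in this non-Euclidean, $\nu$-weighted setting, and verify that the logarithmic Caccioppoli estimate --- Stummel--Kato data term included --- produces a $BMO$ bound genuinely uniform in the radius, which is where \eqref{3456} is indispensable and where one must keep ratios of the nondecreasing $\Phi_{|f|^{p-1}}$ at comparable radii under control. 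The secondary, more mechanical, difficulty is ensuring that each Moser iteration's accumulated constant is of the form $e^{CM_1}$ through Lemma \ref{lemmaserie}, exactly as in the proof of Theorem \ref{localboundedness}.
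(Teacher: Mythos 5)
Your overall architecture is exactly the paper's: Moser iteration on $\bar u=u+\lambda$ with $\lambda\approx(\Phi_{|f|^{p-1}}(r))^{1/(p-1)}$, a positive-exponent chain for the supremum, a negative-exponent chain for the infimum, and a crossover via $\log\bar u\in BMO_\nu$ plus a John--Nirenberg inequality on $(\Omega,\rho,\nu)$ (the paper gets this from the $\beta=1-p$ Caccioppoli estimate and cites \cite{MRW2}), with Lemma \ref{lemmaserie} converting the accumulated $\Phi_{f_1}^{-1}$ factors into $e^{CM_1}$. The weak Harnack and crossover parts of your plan match the paper's proof step for step.

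The genuine gap is in your first step: you take the $L^p\to L^\infty$ bound of Theorem \ref{localboundedness} and claim a ``routine interpolation \dots\ and the standard hole-filling iteration over concentric balls between $B_{\tau r}$ and $B_r$'' lowers the exponent from $p$ to the John--Nirenberg exponent $t_0=p_0$. That hole-filling argument needs sup estimates on \emph{arbitrary} pairs of concentric balls $B_\rho\subset B_{\rho'}$, $\tau r\le\rho<\rho'\le r$, with a constant blowing up like $(\rho'-\rho)^{-N}$, so that infinitely many absorption steps fit inside the annulus $B_r\setminus B_{\tau r}$. In this axiomatic setting the cutoffs of (H6) --- and hence the estimate \eqref{2323} --- only come at the fixed dilation ratio $\tau$: one application already jumps from $B_{\tau r}$ to $B_r$, the next would require $B_{r/\tau}$, and the term $\tfrac12\sup_{B_r}\bar u$ produced by Young's inequality cannot be absorbed because it lives on a strictly larger ball than the quantity being estimated. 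So the exponent-lowering step, as written, does not go through unless you strengthen (H6) to provide cutoffs between arbitrary concentric balls with gradient bound $\sim(\rho'-\rho)^{-\gamma}$. The paper avoids the issue entirely: it never lowers the exponent, but instead runs the upward iteration directly from the John--Nirenberg exponent, choosing $\beta_j=p_0k^j-p+1$ (so $q_j=\tfrac{p_0}{p}k^j$), with $p_0$ picked small and such that $p_0k^j-p+1\neq0$ for all $j$ so that the constants $|q_j|^p|\beta_j|^{-1}$ stay of size $k^{j(p-1)}$; this bookkeeping condition is also absent from your sketch and would be needed if you repair the positive chain in the same way. The rest of your argument (negative chain, logarithmic Caccioppoli, uniformity in $r$ via \eqref{3456}, and the role of $\tfrac8\tau B\Subset\Omega$) is consistent with the paper.
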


\begin{proof}
We start as in Theorem \ref{localboundedness}, setting $\bar{u}=u+\lambda$, where $\lambda>0$ is specified below. We note that we can apply Theorem \ref{localboundedness} on $B_\frac{r}{\tau}$, hence $\|u\|_{L^\infty(B_r)}$ is finite. Now let $\varphi$ be a non negative Lipschitz function compactly supported in $B_{r}$, then for any $\beta\in\R\setminus\{0\}$ we can take $\phi= \varphi^p\bar{u}^\beta$ as a test function in \eqref{eqn}.

We find
\begin{align}\label{55}
\int_{B_{r}}\varphi ^p\bar{u}^{\beta-1}|\sqrt Q\nabla u|^pd\mu&\leq C\left\{|\beta|^{-p}\int_{B_{r}} \bar{u}^{\beta+p-1}|\sqrt Q\nabla \varphi|^pd\mu\right.\\
\nonumber&\qquad\,\,\,\,\left.+|\beta|^{-1}\int_{B_r} f_1\varphi^p\bar{u}^{\beta+p-1}\, d\nu\right\},
\end{align}
where we set $f_1=\left(\frac{|f|}{\lambda}\right)^{p-1}$, such that we have $|f|^{p-1}\le f_1\bar{u}^{p-1}$ and  $f_1\in M'_p (\Omega)$.  Now set
\begin{align*}
&\U(x) =
\begin{cases}
\bar{u}^q(x)\quad
\quad \hbox{where } pq=p+\beta-1, \hbox{ if } \beta\neq 1-p
\\
\log \bar{u}(x)
\quad \hbox{if } \beta=1-p
\end{cases}
\end{align*}
Then from \eqref{55} we have
\begin{align} \label{eq:betadiverso}
&\int_{B_{r}}\varphi^p|\sqrt Q\nabla \U|^p\,d\mu\\
&\le
\nonumber C|q|^p
\left\{|\beta|^{-p}\int_{B_{r}}|\sqrt Q\nabla  \varphi|^p\U^p \,d\mu +|\beta|^{-1}\int_{B_{r}}f_1\varphi^p \U^p\,d\nu\right\}\,
\quad{\rm if }\beta \neq 1-p,
\end{align}
while
\begin{multline} \label{eq:betauguale}
\int_{B_{r}}\varphi^p|\sqrt Q \nabla\U|^p\,d\mu\\
\le C\left\{\int_{B_{r}}|\sqrt Q\nabla\varphi|^p\,d\mu
+\int_{B_{r}}f_1\varphi^p\,d\nu\right\}\quad \hbox{if} \ \beta
=1-p\,.
\end{multline}
From now on we consider the case when $f$ is not $0$ almost everywhere, such that $\Phi_{|f|^{p-1}}(r)>0$. We set
\begin{equation}\label{lambdaaa}
\lambda=(2\sigma\Phi_{|f|^{p-1}}(r))^\frac{1}{p-1},
\end{equation}
where $\sigma\geq1$ is a suitable constant to be chosen later. In particular $\Phi_{f_1}(r)=\frac{1}{2\sigma}$.

We start considering \eqref{eq:betauguale}. We use Corollary \ref{piccolograndestummel}, with the choice $\varepsilon=\frac{1}{2\sigma}$ and $r_\varepsilon=r$, and by condition (H5) we have
\begin{align*}
\int_{B_{r}}f_1\varphi^p\,d\nu&\le \frac{C}{\sigma} \int_{B_{r}}|\sqrt Q\nabla\varphi|^p\,d\mu+ \frac{C}{\sigma} r^{-\gamma p}\int_{B_r}\varphi^p\,d\nu\\
&\leq C \int_{B_{r}}|\sqrt Q\nabla\varphi|^p\,d\mu+Cr^{-\gamma p}\nu(B_r) \left(\frac{1}{\nu(B_r)}\int_{B_r}\varphi^{kp}\,d\nu\right)^\frac{1}{k}\\
&\leq C\left(1+r^{p(1-\gamma)}\frac{\nu(B_r)}{\mu(B_r)}\right)\int_{B_{r}}|\sqrt Q\nabla\varphi|^p\,d\mu
\end{align*}
Then, if $\beta=1-p$
\begin{equation}\label{52}
\int_{B_{r}} \varphi^p|\sqrt Q\nabla \mathcal U|^p d\mu\le C\left(1+r^{p(1-\gamma)}\frac{\nu(B_r)}{\mu(B_r)}\right) \int_{B_{r}}|\sqrt Q\nabla\varphi|^pd\mu.
\end{equation}
Now we choose $\varphi$ to be the first function in the sequence provided by condition (H6). From (H4) and \eqref{52} we get
\begin{align*}
& \left(\frac{1}{\nu(B_{\tau r})}\int_{B_{\tau r}}|\mathcal U-\mathcal U_{B_{\tau r};\nu}|\,d\nu\right)^p\\
&\qquad\le C\left(\frac{r}{\mu(B_{\tau r})}\int_{B_{\tau r}}|\sqrt Q\nabla \mathcal U|\,d\mu\right)^p\\
&\qquad\le C\frac{r^p}{\mu(B_{\tau r})} \int_{B_{r}}\varphi^p|\sqrt Q\nabla \mathcal U|^p \,d\mu\\
&\qquad\le C\frac{r^p}{\mu(B_{\tau r})}\left(1+r^{p(1-\gamma)}\frac{\nu(B_r)}{\mu(B_r)}\right) \int_{B_{r}}|\sqrt Q\nabla\varphi|^pd\mu\\
&\qquad\le C\frac{r^{p(1-\gamma)}}{\mu(B_{\tau r})}\left(1+r^{p(1-\gamma)}\frac{\nu(B_r)}{\mu(B_r)}\right) \int_{B_{r}}d\nu\\
&\qquad= Cr^{p(1-\gamma)}\frac{\nu(B_r)}{\mu(B_{\tau r})}\left(1+r^{p(1-\gamma)}\frac{\nu(B_r)}{\mu(B_r)}\right).
\end{align*}
By the doubling assumption on $\nu$ and by condition \eqref{3456} we have that
$$
\sup\left(\frac{1}{\nu(B_{s})}\int_{B_{s}}|\mathcal U-\mathcal U_{B_{s};\nu}|\,d\nu\right)
$$
is finite, where the supremum is computed over all balls with radius small enough and such that $\frac{8}{\tau}B_s\Subset\Omega$. Then $\mathcal U$ locally belongs to $BMO_\nu$.  Thus, arguing as in \cite{MRW2}[Proposition 5.1 and Corollary 5.3], we see that there are positive constants $p_0,C$, independent of $\sigma\geq1$, such that for $R$ sufficiently small and $\frac{8}{\tau}B_R\Subset\Omega$ the John-Nirenberg inequality
\begin{equation}\label{johnnirenberg}
\left(\frac{1}{\nu(B_{R})}\int_{B_{R}}e^{p_0 \mathcal U}\,d\nu\right) \left(\frac{1}{\nu(B_{R})}\int_{B_{R}}
e^{-{p_0 \mathcal U}}\,d\nu\right)  \le C\,,
\end{equation}
holds, where we recall that here $\mathcal{U}=\log\bar{u}$. We explicitly note for later use that $p_0>0$ small can be chosen independent of $\sigma\geq1$ and in such a way that
\begin{equation}\label{1379}
p_0k^j-p+1\neq0
\end{equation}
for every $j\in\mathbb{N}$, with $k$ given by (H5). In particular we have
$$\min_{j\in\mathbb{N}} |p_0k^j-p+1|>0.$$
%Now for $r>0$, set $$\Phi(s,r)=\left(\frac{1}{\nu(B_r)}\int_{B_r}|\bar{u}|^sd\nu\right)^{1/s}$$
%and then, according to \eqref{johnnirenberg} we find
%we  get for every small enough $r>0$
%\begin{equation} \label{58}
%\Phi(p_0,r)\le C\Phi(-p_0, r).
%\end{equation}
Now we consider the case $\beta> 1-p$, see \eqref{eq:betadiverso}. For every $j\in\mathbb{N}$ we choose $\beta_j=p_0k^j-p+1$ such that $q_j=\frac{\beta_j+p-1}{p}=\frac{p_0}{p}k^j>0$. Using \eqref{eq:betadiverso} and Corollary \ref{piccolograndestummel} we obtain for every $0<\varepsilon<\sup\Phi_{f_1}$
\begin{align*}
\int_{B_{r}}&\varphi^p|\sqrt Q\nabla \U|^p\,d\mu\\
\nonumber&\leq C|q_j|^p|\beta_j|^{-p}\int_{B_{r}}|\sqrt Q\nabla  \varphi|^p\U^p \,d\mu  +C|q_j|^{p}|\beta_j|^{-1}\varepsilon\int_{B_{r}}\varphi^p|\sqrt Q\nabla \U|^p\,d\mu\\
&\,\,\,\,+C|q_j|^{p}|\beta_j|^{-1}\varepsilon\int_{B_{r}}|\sqrt{Q}\nabla\varphi|^p \U^p\,d\mu+C|q_j|^p|\beta_j|^{-1}\omega_{f_1}(\varepsilon)\int_{B_r}\varphi^p\mathcal{U}^p\,d\nu.
\end{align*}
By our choice of $\beta_j,q_j$ and by condition \eqref{1379} we see that
\begin{align}
\label{pigreco}\int_{B_{r}}&\varphi^p|\sqrt Q\nabla \U|^p\,d\mu\\
\nonumber&\leq C\int_{B_{r}}|\sqrt Q\nabla  \varphi|^p\U^p \,d\mu  +\tilde{C}k^{j(p-1)}\varepsilon\int_{B_{r}}\varphi^p|\sqrt Q\nabla \U|^p\,d\mu\\
\nonumber&\,\,\,\,+\tilde{C}k^{j(p-1)}\varepsilon\int_{B_{r}}|\sqrt{Q}\nabla\varphi|^p \U^p\,d\mu+Ck^{j(p-1)}\omega_{f_1}(\varepsilon)\int_{B_r}\varphi^p\mathcal{U}^p\,d\nu.
\end{align}
Up to choosing an even larger constant, we can assume for later use that $\tilde{C}\geq1$. Now we choose $\varepsilon=\varepsilon_j=\frac{1}{2\tilde{C}k^{j(p-1)}}$, and we note that by our assumptions we can fix the constant $\sigma\geq1$ introduced in \eqref{lambdaaa} and set $\sigma=\tilde{C}$. With this choice of $\sigma$ we have $\Phi_{f_1}(r)=\frac{1}{2\sigma}\geq\varepsilon,$ and hence $\varepsilon$ is a feasible choice in \eqref{pigreco}. Thus we have
\begin{align*}
    \int_{B_{r}}&\varphi^p|\sqrt Q\nabla \U|^p\,d\mu\\
&\leq C\int_{B_{r}}|\sqrt Q\nabla  \varphi|^p\U^p\,d\mu+Ck^{j(p-1)}\omega_{f_1}\left(\frac{1}{2\tilde{C}k^{j(p-1)}}\right)\int_{B_r}\varphi^p\mathcal{U}^p\,d\nu.
\end{align*}
From the Sobolev inequality \eqref{S} we find,
\begin{align}
\label{3578}&\left( \frac{1}{\nu(B_r)}\int_{B_{r}}| \varphi \mathcal U|^{kp}d\nu\right)^{\frac{1}{k}}\\
\nonumber&\quad\le C \frac{r^p}{\mu(B_{r})}\left\{\int_{B_{r}}|\sqrt Q\nabla  \varphi|^p\U^p \,d\mu+k^{j(p-1)}\omega_{f_1}\left(\frac{1}{2\tilde{C}k^{j(p-1)}}\right)\int_{B_{r}}|\varphi \mathcal U|^pd\nu\right\}.
\end{align}
We specialize our choice of cutoff function $\varphi$ to be $\varphi_j$ as provided by condition (H6) relative to the ball $B_r$. From \eqref{3578} and (H6) we obtain with $S_j=\operatorname{supp}(\varphi_j)$, $S_0=B_r$,
\begin{align}
\label{gnegnegne}&\left(\frac{1}{\nu(B_{r})} \int_{B_{r}}|\mathcal U|^{kp}\chi_{S_{j+1}}d\nu\right)^{\frac{1}{k}}\\
&\le C r^{p(1-\gamma)}\frac{\nu(B_r)}{\mu(B_{r})}
\left\{T^{jp}+k^{j(p-1)}r^{\gamma p}\omega_{f_1}\left(\frac{1}{2\tilde{C}k^{j(p-1)}}\right)\right\}\!\!\!
\left(\frac{1}{\nu(B_r)}\int_{B_{r}}|\mathcal U|^p\chi_{S_j} d\nu\right).\nonumber
\end{align}
Now recall that $\Phi_{f_1}(r)=\frac{1}{2\sigma}\geq\frac{1}{2\tilde{C}k^{j(p-1)}}$, hence $$r^{-1}\Phi_{f_1}^{-1}\left(\frac{1}{2\tilde{C}k^{j(p-1)}}\right)\leq r^{-1}\Phi_{f_1}^{-1}\left(\frac{1}{2\sigma}\right)\leq1.$$
Thus
$$k^{j(p-1)}r^{\gamma p}\omega_{f_1}\left(\frac{1}{2\tilde{C}k^{j(p-1)}}\right)
=\frac{1}{2\tilde{C}}r^{\gamma p} \left(\Phi_{f_1}^{-1}\left(\frac{1}{2\tilde{C}k^{j(p-1)}}\right)\right)^{-\gamma p}\geq \frac{1}{2\tilde{C}}.$$
From \eqref{gnegnegne} then we have
\begin{align*}
&\left(\frac{1}{\nu(B_{r})} \int_{B_{r}}|\mathcal U|^{kp}\chi_{S_{j+1}}d\nu\right)^{\frac{1}{k}}\\
&\le C r^{p}\frac{\nu(B_r)}{\mu(B_{r})}T^{jp}\left(\Phi_{f_1}^{-1}\left(\frac{1}{2\tilde{C}k^{j(p-1)}}\right)\right)^{-\gamma p}
\left(\frac{1}{\nu(B_r)}\int_{B_{r}}|\mathcal U|^p\chi_{S_j} d\nu\right).
\end{align*}
Now for $j\in\mathbb{N}$, set $$\Phi(s,j)=\left(\frac{1}{\nu(B_r)}\int_{B_r}|\bar{u}|^s\chi_{S_j} d\nu\right)^{1/s}.$$
Thus, our previous inequality can be rewritten using $\mathcal U = \bar{u}^{q_j}$ and condition \eqref{3456} as
\begin{align}
\label{008}\Phi(kpq_j,j+1)^{pq_j} \leq C r^{\gamma p} T^{jp}\left(\Phi_{f_1}^{-1}\left(\frac{1}{2\tilde{C}k^{j(p-1)}}\right)\right)^{-\gamma p}\Phi(pq_j,j)^{pq_j},
\end{align}
that is
\begin{align*}
&\Phi(k^{j+1}p_0,j+1)\\
&\leq (CT^{pj}r^{\gamma p})^\frac{1}{p_0k^j}\left(\Phi_{f_1}^{-1}\left(\frac{1}{2\tilde{C}k^{j(p-1)}}\right)\right)^{-\frac{\gamma p}{p_0k^j}}\Phi(k^jp_0,j).
\end{align*}
We now iterate the inequality for $j\in\mathbb{N}$, using Lemma \ref{lemmaserie} and noting that $\Phi_{f_1}(r)=\frac{1}{2\tilde{C}}$, arguing as in the proof of \eqref{2323}. We obtain
\begin{align}
\label{3141}\sup_{B_{\tau r}}\bar{u}&\leq Cr^\frac{\gamma kp}{p_0(k-1)}\prod_{j=0}^\infty\left(\Phi_{f_1}^{-1}\left(\frac{1}{2\tilde{C}k^{j(p-1)}}\right)\right)^{-\frac{\gamma p}{p_0k^j}}\left(\frac{1}{\nu(B_r)}\int_{B_r}\bar{u}^{p_0}\,d\nu\right)^\frac{1}{p_0}\\
\nonumber &\leq C e^{\frac{\gamma kp}{p_0(k-1)}M_1}\left(\frac{1}{\nu(B_r)}\int_{B_r}\bar{u}^{p_0}\,d\nu\right)^\frac{1}{p_0},
\end{align}
with $M_1>0$ satisfying \eqref{12356}.

We proceed in a similar way in case $\beta<1-p$, choosing $\beta_j=-p_0k^j-p+1$ for any $j\in\mathbb{N}$ such that $q_j=\frac{\beta_j+p-1}{p}=-\frac{p_0}{p}k^j<0$, with $p_0$ as in \eqref{johnnirenberg}. Up to choosing an even larger constant $\tilde{C}\geq1$ we again obtain \eqref{008}, this time for negative $q_j$'s. Then we deduce
\begin{align*}
&\Phi(-k^{j+1}p_0,j+1)\geq\\
&(CT^{pj}r^{\gamma p})^{-\frac{1}{p_0k^j}}\left(\Phi_{f_1}^{-1}\left(\frac{1}{2\tilde{C}k^{j(p-1)}}\right)\right)^{\frac{\gamma p}{p_0k^j}}\Phi(-k^jp_0,j).
\end{align*}
Iterating the inequality for $j\in\mathbb{N}$ we obtain
\begin{align*}
\inf_{B_{\tau r}}\bar{u}&\geq Cr^{-\frac{\gamma kp}{p_0(k-1)}}\prod_{j=0}^\infty\left(\Phi_{f_1}^{-1}\left(\frac{1}{2\tilde{C}k^{j(p-1)}}\right)\right)^{\frac{\gamma p}{p_0k^j}}\left(\frac{1}{\nu(B_r)}\int_{B_r}\bar{u}^{-p_0}\,d\nu\right)^{-\frac{1}{p_0}}.
\end{align*}
Hence using Lemma \ref{lemmaserie} we have
\begin{align}
\label{0141}&C e^{\frac{\gamma kp}{p_0(k-1)}M_1}\inf_{B_{\tau r}}\bar{u}\geq \left(\frac{1}{\nu(B_r)}\int_{B_r}\bar{u}^{-p_0}\,d\nu\right)^{-\frac{1}{p_0}}.
\end{align}
Now using \eqref{johnnirenberg}, from \eqref{3141} and \eqref{0141} we deduce that
$$
\sup_{B_{\tau r}}\bar{u}\leq  C e^{\frac{2\gamma kp}{p_0(k-1)}M_1} \inf_{B_{\tau r}}\bar{u}.
$$
Therefore, recalling the definition of $\bar{u}$,
$$
\sup_{B_{\tau r}}u\leq C_2 e^{C_3M_1}  \left(\inf_{B_{\tau r}}u+\lambda\right).
$$
If $\Phi_{|f|^{p-1}}(r)>0$ the proof is complete. The proof in case $\Phi_{|f|^{p-1}}(r)=0$ follows along the same lines, and therefore we will omit it.
\end{proof}

Now, as a simple consequence of the Harnack inequality, we get some regularity results for weak solutions of \eqref{Qppoisson}. The proof is a standard consequence of the Harnack inequality, see e.g. \cite{s}, so we will omit it.

\begin{thm}\label{lastone}
Let $1<p<\infty$.  Assume condition (H1)-(H6). Suppose that $u\in QW^{1,p}(\Omega)$ is a solution of \eqref{Qppoisson} where $|f|^{p-1}\in M_p'(\Omega)$ and $f\in L^{p-\frac{p(k-1)}{pk-1}}_{\text{loc}}(\Omega,d\nu)$, with $k$ as in (H5).

Assume also that there exists $r^*,C>0$ such that
\begin{equation}\label{34567}
\sup r^{p(1-\gamma)}\frac{\nu(B_r)}{\mu(B_{r})}\leq C
\end{equation}
where the supremum is computed over all balls such that ${8}B\Subset\Omega$ and with radius $r\leq r^*$, with $\gamma$ as in (H6).

Suppose moreover that there exists $M_1>0$ such that condition
$$
\int_0^{r}\frac{\left(\Phi_{|f|^{p-1}}(s)\right)^{\frac{1}{p-1}}}{s}\,ds\leq M_1 \left(\Phi_{|f|^{p-1}}(r)\right)^{\frac{1}{p-1}}.
$$
is satisfied for every small enough $r>0$ and that
$$
(\Phi_{|f|^{p-1}}(r))^{\frac{1}{p-1}}\leq Cr^\alpha
$$
for some positive constants $C,\alpha$. Then $u$ is locally H\"older continuous with respect to the metric $\rho$.
\end{thm}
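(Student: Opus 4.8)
The plan is to derive local H\"older continuity from the Harnack inequality of Theorem~\ref{lastbutone} by the classical oscillation–decay scheme (see \cite{s}). Fix a compact subdomain $K\Subset\Omega$ and choose $R_0>0$ so small that, for every $x_0\in K$ and every $0<R\le R_0$, the ball $B=B_R(x_0)$ satisfies $\frac{8}{\tau}B\Subset\Omega$, condition \eqref{34567} holds, the hypothesis on $M_1$ is in force with a single constant, and (when $f$ does not vanish a.e.) $\Phi_{|f|^{p-1}}(R/\tau)<\sup\Phi_{|f|^{p-1}}$; the assumed power bound $\big(\Phi_{|f|^{p-1}}(R)\big)^{1/(p-1)}\le CR^{\alpha}$ is also available for such $R$. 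Working inside a fixed compact subdomain of $\Omega$ containing all balls under consideration (so that constants belong to the relevant $QW^{1,p}$ space), write, for $x_0\in K$, $M(R)=\sup_{B_R(x_0)}u$, $m(R)=\inf_{B_R(x_0)}u$ (finite by Theorem~\ref{localboundedness}) and $\omega(R)=M(R)-m(R)$.

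The key observation is that on $B_R(x_0)$ the functions $u-m(R)$ and $M(R)-u$ are nonnegative, belong (locally) to $QW^{1,p}$, and solve an equation of the form \eqref{Qppoisson}: adding a constant leaves $\nabla u$ unchanged, while replacing $u$ by $c-u$ reverses the sign of both $|\sqrt Q\nabla u|^{p-2}Q\nabla u$ and of the right-hand side, so $M(R)-u$ solves \eqref{Qppoisson} with datum $-f$, and $|{-f}|^{p-1}=|f|^{p-1}$ still satisfies every hypothesis of Theorem~\ref{lastbutone}. Since the test functions used in the proof of Theorem~\ref{lastbutone} are compactly supported in the ball, that Harnack estimate applies to functions that are merely nonnegative on the ball in question; applying it to $u-m(R)\ge0$ and to $M(R)-u\ge0$ on $B_R(x_0)$ gives, with one constant $\Lambda=C_2e^{C_3M_1}$ independent of $x_0$ and $R$,
\begin{align*}
M(\tau R)-m(R)&\le \Lambda\Big(m(\tau R)-m(R)+\big(\Phi_{|f|^{p-1}}(R)\big)^{\frac{1}{p-1}}\Big),\\
M(R)-m(\tau R)&\le \Lambda\Big(M(R)-M(\tau R)+\big(\Phi_{|f|^{p-1}}(R)\big)^{\frac{1}{p-1}}\Big).
\end{align*}
Adding these and rearranging yields the oscillation contraction
$$\omega(\tau R)\le \frac{\Lambda-1}{\Lambda+1}\,\omega(R)+C\big(\Phi_{|f|^{p-1}}(R)\big)^{\frac{1}{p-1}}\le \vartheta\,\omega(R)+C R^{\alpha},\qquad \vartheta:=\frac{\Lambda-1}{\Lambda+1}\in(0,1),$$
with $\vartheta$ and $C$ independent of $x_0$ and of $R\le R_0$.

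Iterating this inequality along the geometric sequence $R_j=\tau^jR_0$ and invoking the standard elementary lemma on sequences obeying $\omega(\tau R)\le\vartheta\omega(R)+CR^{\alpha}$ with $\vartheta\in(0,1)$ (choosing an exponent below $\min\{\alpha,\log\vartheta/\log\tau\}$ to sum the error contributions), one obtains $\omega(R)\le C R^{\theta}$ for all $0<R\le R_0$, with $\theta=\theta(\vartheta,\tau,\alpha)>0$ and $C$ uniform over $x_0\in K$. In particular $\omega(R)\to0$, so $u$ has a continuous representative on $K$; and for $x,y\in K$ with $\rho(x,y)\le R_0/2$, taking $R=2\rho(x,y)$ we have $y\in B_R(x)$, and the oscillation bound with centre $x$ gives $|u(x)-u(y)|\le\omega(R)\le C'\rho(x,y)^{\theta}$. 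Absorbing larger distances into the constant, $u$ is H\"older continuous on $K$ with respect to $\rho$, and since $K$ was arbitrary this gives local H\"older continuity.

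I expect the only genuinely delicate point — and the reason the authors omit the proof — to be the \emph{uniformity} of the Harnack constant $\Lambda$, hence of $\vartheta$, across all scales $R\le R_0$ and all centres $x_0\in K$. This is exactly where hypothesis \eqref{34567} (bounding $r^{p(1-\gamma)}\nu(B_r)/\mu(B_r)$) and the uniform validity of the $M_1$-condition enter, while the assumption $\big(\Phi_{|f|^{p-1}}(r)\big)^{1/(p-1)}\le Cr^{\alpha}$ is what makes the data-induced error term in the contraction decay at a rate compatible with the geometric iteration. The remaining verifications — that the admissibility conditions on radii in Theorem~\ref{lastbutone} hold at every step, and the invariance of \eqref{Qppoisson} under $u\mapsto c-u$ — are routine bookkeeping.
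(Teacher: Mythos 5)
Your proposal is correct and is exactly the argument the paper has in mind: the authors omit the proof of Theorem \ref{lastone}, calling it a standard consequence of the Harnack inequality of Theorem \ref{lastbutone} in the spirit of Serrin \cite{s}, and your oscillation--decay scheme (applying Harnack to $u-m(R)$ and $M(R)-u$, which solve \eqref{Qppoisson} with data $\pm f$, and iterating $\omega(\tau R)\le\vartheta\,\omega(R)+CR^{\alpha}$) is precisely that standard route. Your attention to the admissibility of radii, the uniformity of the constant via \eqref{34567} and the $M_1$-condition, and the fact that only nonnegativity on the ball is used in the Harnack proof covers the points that need care.
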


\begin{rem}\label{lastrem}
We explicitly note that, as a consequence of Theorem \ref{lastone}, $u$ is also continuous with respect to the Euclidean topology. Moreover, if for every subdomain $\Omega'\Subset\Omega$ there are positive constants $c_0,\delta$ such that for every $x,x_0\in\Omega'$ one has
$$
d(x,x_0)\leq c_0|x-x_0|^\delta,
$$
then $u$ is locally H\"older continuous also with respect to the Euclidean distance.
\end{rem}

\bigskip
{\bf Acknowledgement}.
This work has been supported by Università degli Studi di Catania, "Piano PIA.CE.RI", upb 53722122154.

G. Di Fazio, M.S. Fanciullo, D.D. Monticelli and P. Zamboni are also members of the Gruppo Nazionale per l’Analisi Matematica, la Probabilità e le loro Applicazioni
(GNAMPA) of the Istituto Nazionale di Alta Matematica (INdAM).

 \bibliographystyle{plain}

\end{document}